\newcommand{\adm}[1]{{\left\vert\kern-0.25ex\left\vert\kern-0.25ex\left\vert #1 
		\right\vert\kern-0.25ex\right\vert\kern-0.25ex\right\vert}}
\newtheorem{theorem}{Theorem}[section]
\newtheorem*{theorem*}{Theorem}
\newtheorem{lemma}[theorem]{Lemma}
\newtheorem{corollary}[theorem]{Corollary}
\newtheorem{proposition}[theorem]{Proposition}
\newtheorem{remark}[theorem]{Remark}
\theoremstyle{definition}
\def\spann{\text{span}}
\def\bR{\mathbb{R}}
\def\bC{\mathbb{C}}
\def\bN{\mathbb{N}}
\def\bZ{\mathbb{Z}}
\def\bL{\mathbb{L}}
\def\bH{\mathbb{H}}
\def\smo{\setminus\{0\}}
\def\cF{\mathcal{F}}
\def\cA{\mathcal{A}}
\def\cV{\mathcal{V}}
\def\cS{\mathcal{S}}
\def\cL{\mathcal{L}}
\def\cI{\mathcal{I}}
\def\cP{\mathcal{P}}
\def\cQ{\mathcal{Q}}
\def\cH{\mathcal{H}}
\def\rd{\bR^d}
\def\rn{\bR^n}
\def\rnn{\bR^{2n}}
\def\rdd{\bR^{2d}}
\def\rdq{\bR^{4d}}
\def\lan{\langle}
\def\ran{\rangle}
\def\w{\mathrm{w}}
\def\S0{S^0_{0,0}}
\def\Bd'{B_{\delta'}}
\def\cBd'{\bar{B}_{\delta'}}
\def\a{\alpha}
\def\b{\beta}
\def\g{\gamma}
\def\d{\delta}
\def\vp{\varphi}
\def\irp{\int_0^{+\infty}}
\def\ird{\int_{\rd}}
\def\irdd{\int_{\rdd}}
\newcommand*{\M}[1]{M^{p_{#1},q_{#1}}}
\newcommand*{\W}[1]{W^{p_{#1},q_{#1}}}
\newcommand*{\X}[1]{X^{p_{#1},q_{#1}}}
\def\cFs{\cF_\sigma}
\def\Sp{\mathrm{Sp}(d,\bR)}
\def\Mp{\mathrm{Mp}(d,\bR)}
\newcommand*\dd[1]{\mathop{}\!\mathrm{d}#1}
\begin{document}
	
	\title[Phase space analysis of the twisted Laplacian]{Phase space analysis of spectral multipliers for the twisted Laplacian}
	
	\author[S. I. Trapasso]{S. Ivan Trapasso}
	\address{Dipartimento di Scienze Matematiche ``G. L. Lagrange'', Politecnico di Torino, corso Duca degli Abruzzi 24, 10129 Torino, Italy}
	\email{salvatore.trapasso@polito.it}
	
	\subjclass[2020]{35K05, 35L05, 42B35, 35S05, 35Q40, 35R11, 35A18}
	\keywords{Twisted Laplacian, spectral multipliers, heat equation, wave equation, Hermite operators, pseudo-differential operators, twisted convolution, time-frequency analysis, modulation spaces, Wiener amalgam spaces.}
	
	\begin{abstract}
		We prove boundedness results on modulation and Wiener amalgam spaces for some families of spectral multipliers for the twisted Laplacian. We exploit the metaplectic equivalence relating the twisted Laplacian with a partial harmonic oscillator, leading to a general transference principle for the corresponding spectral multipliers. Our analysis encompasses powers of the twisted Laplacian and oscillating multipliers, with applications to the corresponding Schr\"odinger and wave flows. On the other hand, elaborating on the twisted convolution structure of the eigenprojections and its connection with the Weyl product of symbols, we obtain a complete picture of the boundedness of the heat flow for the twisted Laplacian. Results of the same kind are established for fractional heat flows via subordination. 
	\end{abstract}
	\maketitle
	
	\section{Introduction}
	
	In this note we deal with the phase space analysis of several flows stemming from the second-order partial differential operator on even-dimensional Euclidean spaces known as the \emph{twisted Laplacian}, or the \emph{special Hermite operator}. To be precise, setting $z=(x,y) \in \rdd$, we consider 
	\begin{equation}
		\cL \coloneqq - \sum_{j=1}^d \Big[\big(\partial_{x_j}-\frac{i}{2} y_j \big)^2+\big(\partial_{y_j}+\frac{i}{2}x_j\big)^2 \Big] = -\Delta_z + \frac{1}{4}|z|^2 -i\sum_{j=1}^d  \big(x_j\partial_{y_j} - \partial_{x_j}y_j\big).
	\end{equation}
	The study of this operator dates back at least to the works by Strichartz \cite{strichartz_89} and Thangavelu on Hermite and Laguerre expansions \cite{thangavelu_98,thangavelu_93,thangavelu_91}. The spectral analysis of $\cL$ is also a well-developed topic in the mathematical physics literature, where this operator is usually known as the \emph{Landau Hamiltonian}, as it governs the quantum dynamics of a charged particle under the influence of a uniform magnetic field. The twisted Laplacian has also received much attention over the years from the harmonic analysis community, due to its intimate connection with the sub-Laplacian on the Heisenberg group, that is $\bH^d = \bC^d \times \bR$ endowed with the product
	\begin{equation}
		(z,t) (w,s) = \Big( z+w,t+s+\frac12 \mathrm{Im}(z \cdot \overline{w}) \Big), 
	\end{equation} from which it can be derived via inverse Fourier transform with respect to the center. 
	
	Much is known on $\cL$ as a differential operator, especially concerning the local regularity aspects. It is a densely defined, essentially self-adjoint positive operator on $L^2(\rdd)$, which can be viewed as the pseudo-differential operator $L^\w$ with Weyl symbol
	\begin{equation} \label{eq-symbol-L}
		L(z,\zeta) = \sum_{j=1}^d \Big[\Big(\xi_j - \frac{y_j}{2} \Big)^2 + \Big(\eta_j+\frac{x_j}{2} \Big)^2\Big], \quad z=(x,y), \, \zeta=(\xi,\eta) \in \rdd. 
	\end{equation}
	The zero set of $L$ shows that $\cL$ is in fact a degenerate elliptic operator, meaning that it is not globally elliptic (in the sense of Helffer \cite{helffer}) in any H\"ormander symbol class. Nevertheless, $\cL$ is known to be \emph{globally regular} \cite{degoss_09,wong_05} --- that is, for a temperate distribution $f \in \cS'(\rdd)$, if $\cL f \in \cS(\rdd)$ then $f \in \cS(\rdd)$. This property happens to be satisfied by every Weyl operator $a^\w$ that is globally elliptic in the sense of Shubin \cite{shubin}, namely such that the symbol $a$ satisfies
	\begin{equation}
		|a(z,\zeta)| \ge C (1+|z|^2 + |\zeta|^2), \qquad |z|^2+|\zeta|^2 > R,
	\end{equation} for suitable constants $C,R>0$. Nevertheless, it is easy to realize that $\cL$ fails to be globally elliptic (or even hypoelliptic) in the context of the Shubin $\Gamma$-calculus or the Parenti-Cordes $G$-calculus.
	
	The careful analysis of twisted differential operators given in \cite{buzano} shows that, at least in dimension $d=2$, the global regularity of $\cL$ actually comes along with the global ellipticity and the injectivity in $\cS'(\rd)$ of its \textit{source}, that is the companion operator
	\begin{equation}
		\cH=- \sum_{j=1}^d \big[ \partial_{x_j}^2 + x_j^2 \big] = -\Delta + |x|^2. 
	\end{equation} This is the well known Hermite operator, or quantum harmonic oscillator --- the Weyl operator with symbol $H(q,p)= q^2+p^2$, $(q,p) \in \rdd$. Mentioning this result is barely scratching the surface of the deep entanglement between the twisted Laplacian and the harmonic oscillator. Indeed, if $\cH$ coincides with the quantization of $H$ according to the standard rule 
	\begin{equation}
		q_j \mapsto X_j=x_j, \quad p_j \mapsto D_j= -i\partial_{x_j},
	\end{equation} then $\cL$ can be similarly viewed as a non-standard quantization of $H$ according to the correspondence
	\begin{equation}
		q_j \mapsto V_j = -i\partial_{x_j}-\frac{1}{2}y_j, \qquad p_j \mapsto W_j = -i\partial_{y_j}+\frac{1}{2}x_j. 
	\end{equation}
	This form of quantization where differentiation and multiplication get intertwined has been widely explored in connection with quantum mechanics in phase space. A quite complete picture of the relation between standard Weyl and the so-called \emph{Landau-Weyl operators} has been given in \cite{degoss_10,degoss_09,degoss_08}, especially with regard to their spectral structure. Considering the twisted Laplacian and the harmonic oscillator for the sake of concreteness, it turns out that these operators have the same discrete spectrum 
	\begin{equation}
		\sigma(\cH)=\sigma(\cL)=d+2\bN \coloneqq \{d + 2k : k \in \bN\},
	\end{equation} but the eigenvalues of $\cL$ (the so-called Landau levels) are severely degenerate. To be precise, we have the resolutions of the identity
	\begin{equation}
		\cH = \sum_{k \in \bN} (d+2k) P_k, \qquad \cL = \sum_{k \in \bN} (d+2k) Q_k,
	\end{equation} where:
	
	\begin{itemize}\setlength\itemsep{0.25cm}
		\item[--] $P_k$ is the orthogonal projection onto the finite-dimensional eigenspace spanned by $d$-dimensional Hermite functions $\Phi_\a$, $\a \in \bN^d$, with $|\a|=\a_1+ \ldots +\a_d = k$.
		
		\item[--] The range of the eigenprojection $Q_k$ for the twisted Laplacian is spanned by the so-called special Hermite functions $\Phi_{\a,\b}$, $\a,\b \in \bN^d$, with $|\b|=k$, which are ultimately related to Laguerre polynomials \cite{thangavelu_93}.
	\end{itemize}
	
	A more detailed review of these properties can be found in Section \ref{sec-hermite-laguerre}. From a more abstract point of view, these spectral relationships come not as a surprise, since the operators involved in the Landau-Weyl quantization satisfy the canonical commutation relation $[V_j,W_k] = -i \delta_{j,k}$. As a result, the Stone-von Neumann theorem implies the unitary intertwining of the action of $\cL$ on $L^2(\rdd)$ with that of a partial harmonic oscillator $I \otimes \cH$ on $L^2(\rd)\otimes L^2(\rd)$, where $I$ is the identity operator. To be precise, there exists a unitary operator $\cA_J$ on $L^2(\rdd)$ such that $\cL = \cA_J (I\otimes \cH) \cA_J^*$. This quantitative connection is actually a fundamental ingredient in the characterization of the global regularity of $\cL$ given in \cite{buzano}.
	
	\subsection{Phase space analysis} The results discussed so far point out to two considerations at least. First, there are several bridges connecting the twisted Laplacian and the Hermite operator that could be possibly exploited --- for instance, establishing \textit{transference principles} (in both directions) is a traditional and powerful approach in this context \cite{thangavelu_93,vanN}. Secondly, the rise of $\cL$ as a twisted phase space quantization of the symbol $H$, as well as the investigations on the \textit{global} regularity of this operator, naturally call into play ideas and techniques of phase space analysis. 
	
	Aspects of both approaches are embraced in this note, where we perform a Gabor wave packet analysis of the twisted Laplacian and related flows --- such as those arising from the Schr\"odinger, heat or wave equation for $\cL$ and its fractional powers $\cL^{\nu}$, $0<\nu<1$. More precisely, we prove boundedness results for such semigroups on modulation spaces and Wiener amalgam spaces, which are families of Banach spaces characterized by the (mixed, possibly weighted) Lebesgue summability of a phase space representation of their members. In short, the Gabor transform of $f \in \cS'(\rd)$ with respect to the atom $g \in \cS(\rd)\smo$ is defined by 
	\begin{equation}
		V_g f (x,\xi) = (2\pi)^{-d/2} \ird e^{-i\xi \cdot y} f(y)\overline{g(y-x)} \dd{y}, \qquad (x,\xi) \in \rd \times \widehat{\rd} = \rdd,
	\end{equation} and it can be viewed as the (coefficient of a continuous) decomposition of $f$ into Gabor wave packets of the form $\pi(x,\xi) g (y) = (2\pi)^{-d/2} e^{i \xi \cdot y} g(y-x)$. 
	
	The modulation space $M^{p,q}(\rd)$, $1\le p,q \le \infty$, is the collection of the distributions satisfying
	\begin{equation}
		\| f \|_{M^{p,q}} = \Big( \ird \Big( \ird |V_g f(x,\xi)|^p \dd{x} \Big)^{q/p} \dd{\xi} \Big)^{1/q} < \infty,
	\end{equation} with obvious modifications in the case where $p=\infty$ or $q=\infty$. We write $M^p(\rd)$ if $q=p$. As a rule of thumb, the index $p$ essentially reflects the Lebesgue summability of $f$, while the index $q$ relates with the summability of its Fourier transform --- hence, with the regularity of $f$. Refinements can be provided by introducing suitable weights in the modulation space norms --- more details and generalizations as needed in this note can be found in Section \ref{sec-prel} below. 
	
	\subsection{A transference principle for spectral multipliers}
	We investigate the phase space structure of some spectral multipliers associated with $\cL$ and $\cH$. In general, given $m \in L^\infty(d+2\bN;\bC)$, we consider the \textit{spectral multipliers} defined as follows:
	\begin{equation}\label{eq:eigenexp-def}
		m(\cH) f \coloneqq \sum_{k \in \bN} m(d+2k) P_k f, \qquad m(\cL) g \coloneqq \sum_{k \in \bN} m(d+2k) Q_k g,
	\end{equation} where $f \in L^2(\rd)$ and $g \in L^2(\rdd)$. The study of their boundedness on Lebesgue or Hardy spaces is a classical problem of harmonic analysis, especially for the harmonic oscillator. Inspired by the analogous problem for Fourier multipliers, it is certainly interesting to investigate the behaviour of spectral multipliers for $\cH$ and $\cL$ on modulation spaces. Indeed, as far the standard Laplacian is concerned, it is remarkable that several dispersive flows associated with oscillating multipliers (including the Schr\"odinger and wave semigroups) preserve the time-frequency concentration of the initial datum --- in the sense that they are bounded on $M^p$, while they generally fail to be bounded on Lebesgue spaces except for $L^2=M^2$ \cite{benyi_unimod}. 
	
	Spectral multipliers of Mikhlin-H\"ormander type for the Hermite operator were proved to be continuous on suitable modulation spaces in \cite{bhimani_19}, where additional boundedness results for $m(\cH)$ were obtained via a reverse transference theorem involving the $L^p$ continuity of spectral multipliers for $\cL$ on the polarised Heisenberg group. Moreover, in the recent series of papers \cite{bhimani_21,bhimani_22,cordero_21} the authors focused on the time-frequency analysis of the fractional powers $\cH^\nu$ and the fractional heat propagator $e^{-t\cH^\nu}$, $\nu >0$. 
	
	The phase space analysis of $\cH$-multipliers performed in the aforementioned papers largely benefits from the Gabor analysis of pseudo-differential operators, that is nowadays widely developed --- especially for the Shubin $\Gamma$-classes, the symbol $H$ being globally elliptic in this sense. In order to obtain boundedness results for $m(\cL)$ we establish a transference principle on modulation spaces, by exploiting old and new results on Hermite multipliers in conjunction with the intertwining relationship  \begin{equation}\label{eq-intro-intertw}
		m(\cL) = \cA_J (I\otimes m(\cH))\cA_J^*,
	\end{equation} 
	which extends the unitary equivalence between the twisted Laplacian and a partial harmonic oscillator to the corresponding spectral functions. A simplified, unweighted form of the transference Theorem \ref{thm-transf} reads as follows.
	
	\begin{theorem*}
		If $m(\cH)$ is bounded on $M^p(\rd)$ for every $1 \le p \le \infty$, then $m(\cL)$ is bounded on $M^p(\rdd)$ for every $1 \le p \le \infty$.
	\end{theorem*}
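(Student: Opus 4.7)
The plan is to leverage the intertwining identity
\begin{equation}
m(\cL) = \cA_J(I \otimes m(\cH))\cA_J^*
\end{equation}
from \eqref{eq-intro-intertw} and reduce the question to two independent facts about $M^p$-boundedness: one for the metaplectic factor $\cA_J$, and one for the tensor-product operator $I \otimes m(\cH)$. For the first, I would identify $\cA_J$ as a metaplectic operator on $L^2(\rdd)$: by construction it implements the Stone-von Neumann equivalence between the Schr\"odinger representation on $L^2(\rd)\otimes L^2(\rd)$ generated by $(X_j,D_j)$ and the representation underlying the Landau-Weyl calculus generated by $(V_j,W_j)$, which amounts to a unitary arising from a linear symplectomorphism of $\rdq$. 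The fact that metaplectic operators are bounded isomorphisms of $M^p(\rdd)$ for every $1 \le p \le \infty$ is classical and follows, for instance, from the symplectic covariance of the Wigner distribution combined with the Gabor-coefficient characterisation of modulation spaces. Hence $\cA_J$ and $\cA_J^*$ are topological automorphisms of each $M^p(\rdd)$, and it suffices to establish the boundedness of $I \otimes m(\cH)$ on $M^p(\rdd)$.

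For this tensor-product step I would pick a tensorised window $g = g_1 \otimes g_2$, with $g_1, g_2 \in \cS(\rd) \smo$, in the definition of the $M^p(\rdd)$-norm. A direct Fubini rearrangement shows that the Gabor transform of $F \in \cS(\rdd)$ with a tensorised window coincides with an iterated partial Gabor transform in the two blocks of variables, yielding an equivalence of the form
\begin{equation}
\|F\|_{M^p(\rdd)}^p \asymp \int_{\rdd} \|y_1 \mapsto V_{g_2} F(y_1,\cdot)(x_2,\xi_2)\|_{M^p(\rd)}^p \dd{x_2}\dd{\xi_2}.
\end{equation}
Since $I \otimes m(\cH)$ acts as $m(\cH)$ in the second variable and trivially in the first, the assumed boundedness of $m(\cH)$ on $M^p(\rd)$, applied to the partial slice at each fixed $(x_2,\xi_2)$, together with the above Fubini identity yields the desired bound for $I \otimes m(\cH)$ on $M^p(\rdd)$.

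The hardest step will be making the tensor-product reduction rigorous at the endpoint $p = \infty$, where modulation spaces are not separable and the naive $L^p$-Fubini computation must be replaced by a duality or $w^*$-density argument against the predual Feichtinger algebra $M^1$. A closely related subtlety is the commutation of $m(\cH)$ with the partial Gabor transform $V_{g_2}$: one needs to know that this partial transform maps $M^p(\rdd)$ continuously into a suitable vector-valued $M^p$-space, so that the fibrewise application of $m(\cH)$ makes sense as a measurable family of bounded operators. Once these points are in place, the weighted and $(p,q)$-refinements needed for the full Theorem \ref{thm-transf} should follow along the same lines, with tensorised weights and mixed-norm modulation spaces replacing the unweighted $M^p$.
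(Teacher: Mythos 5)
Your high-level strategy matches the paper's: use $m(\cL)=\cA_J(I\otimes m(\cH))\cA_J^*$, control $\cA_J$ by the $M^p$-boundedness of metaplectic operators, and reduce the statement to boundedness of $I\otimes m(\cH)$ on $M^p(\rdd)$. The two proofs diverge on how to handle that tensor-product step, and your version as written has a directional error in the Fubini reduction.

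In the identity you propose, for each fixed $(x_2,\xi_2)$ you take the $M^p(\rd)$-norm of $y_1\mapsto V_{g_2}F(y_1,\cdot)(x_2,\xi_2)$, a function of the \emph{first} variable, and then integrate in $(x_2,\xi_2)$. But $m(\cH)$ acts in the \emph{second} variable: applying $I\otimes m(\cH)$ replaces the inner function by $y_1\mapsto V_{g_2}\bigl(m(\cH)[F(y_1,\cdot)]\bigr)(x_2,\xi_2)$, so the multiplier sits \emph{underneath} the partial transform $V_{g_2}$ in the same variable. Since $m(\cH)$ does not commute with $V_{g_2}$, there is no slice-wise application of your hypothesis in this decomposition --- the ``commutation of $m(\cH)$ with $V_{g_2}$'' that you flag as a technical subtlety is actually a showstopper for the Fubini as you have set it up. The reduction must be run the other way: fix $(x_1,\xi_1)$, set $h_{x_1,\xi_1}(y_2)\coloneqq V_{g_1}[F(\cdot,y_2)](x_1,\xi_1)$, and use that the partial Gabor transform in the \emph{first} variable commutes trivially with $I\otimes m(\cH)$, since they act in disjoint groups of variables. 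This gives, for $1\le p<\infty$,
\begin{equation}
\|(I\otimes m(\cH))F\|_{M^p(\rdd)}^p=\irdd \|m(\cH)h_{x_1,\xi_1}\|_{M^p(\rd)}^p\dd{x_1}\dd{\xi_1}\le\|m(\cH)\|_{M^p\to M^p}^p\,\|F\|_{M^p(\rdd)}^p,
\end{equation}
after which you still face the $p=\infty$ endpoint, which for the measurability and density reasons you anticipate is genuinely not covered by a naive Fubini.

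The paper's proof of Theorem~\ref{thm-transf} sidesteps both issues. It works only at the endpoints, invoking the Gabor-matrix/Schwartz-kernel characterization of $M^1\to M^1$ and $M^\infty\to M^\infty$ boundedness (Proposition~\ref{prop-kernelt}). Because $K_{I\otimes m(\cH)}=K_I\otimes K_{m(\cH)}$ and the Gabor transform of a tensor factorizes via \eqref{eq-tensor-pi}, the Schur-type integral splits into a finite factor $\int|V_{g_0}g_0|$ times the corresponding integral for $m(\cH)$ alone, proving both endpoint bounds in one stroke (and accommodating weights as needed for the full theorem); the intermediate $1<p<\infty$ then follow by complex interpolation (Proposition~\ref{prop-modsp-interpol}). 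The kernel-theorem route is cleaner precisely because it treats $p=1$ and $p=\infty$ symmetrically and never has to make sense of a measurable family of slice-wise $m(\cH)$-applications or a vector-valued $M^\infty$.
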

	
	\noindent In particular, we are able to investigate the phase space behaviour of singular oscillating multipliers of the form
	\begin{equation}
		m_t(\cL) = \cL^{-\d/2}e^{it\cL^{\g/2}}, \quad \d \ge 0, \, \g \le 1, \, t>0,
	\end{equation} as detailed in Corollary \ref{cor-oscmult-L}. For $\d \in\{0,1\}$ and $\g=1$ we obtain the Poisson-type semigroups for $\cL$, for which only few results in the Euclidean setting are available to the best of our knowledge --- see for instance \cite{dancona} for refined dispersive estimates, \cite{tie} about the finite speed of propagation and \cite{thangavelu_93} for $L^p$ boundedness results. 
	
	The analysis of the oscillating multipliers for $\cL$ allows us to characterize the phase space regularity of the solutions of the wave equation $\partial^2_t u + \cL u = 0$ with initial data $u(z,0) = f(z)$ and $\partial_t u(z,0) = g(z)$, $(z,t) \in \rdd \times \bR$. These are given by
	\begin{equation}
		u(z,t) = \cos(t\cL^{1/2})f(z,t) + \cL^{-1/2}\sin(t\cL^{1/2})g(z,t), 
	\end{equation} and the time-frequency regularity of the initial data (as captured by the $M^p$ norm) is preserved by the wave flow, locally uniformly in time. More precisely, if $f,g \in M^p(\rdd)$, $1 \le p < \infty$ and $t \in [0,T]$ for some $T >0$, there exists $C(T)>0$ such that
	\begin{equation}
		\| u \|_{M^p} \le C(T) (\|f\|_{M^p}+\|g\|_{M^p}). 
	\end{equation}
	This result is obtained via a refined pseudo-differential analysis of the corresponding problem for the Hermite operator (Theorem \ref{thm-hermite-oscmult}), which is of independent interest due to the improvement of the currently available results given in \cite{bhimani_19}. In Section \ref{sec-fracpow} we are also able to use transference beyond the bounded functional calculus, proving boundedness results on (possibly weighted) modulation spaces for fractional powers $\cL^\nu$ with $\nu \in \bR$. 
	
	A key ingredient of the phase space transference principle behind these results is the full characterization of the unitary equivalence in \eqref{eq-intro-intertw}, which happens to be of metaplectic type. Indeed, as already realized in \cite{gramchev_10,gramchev_09}, $\cA_J$ is a metaplectic operator, ultimately related to the Fourier-Wigner transform linking Hermite and special Hermite functions. The action of metaplectic operators on modulation spaces is well understood, especially in light of their role with the Gabor analysis of the Schr\"odinger equation --- see e.g., \cite{CR_book,degoss_11_book} for a comprehensive overview. Indeed, metaplectic operators can be characterized as the Schr\"odinger flows $e^{-itF^\w}$ associated with the Weyl quantization $F^\w$ of quadratic classic Hamiltonian functions $F$. As such, boundedness results on modulation spaces for the (periodic) semigroups $e^{-it\cH}$ and $e^{-it\cL}$ fall within this framework --- see Proposition \ref{prop-metap-mod} and related comments in this connection.
	
	\subsection{The twisted convolution structure and the heat flow} While the transference approach is quite powerful, it should be highlighted that the tensor product structure of $I\otimes m(\cH)$ and the action of the metaplectic intertwiners combine into an unavoidable restriction of the scope of the results established for the Hermite multipliers --- including the obvious loss of smoothing effects or the range shrinkage of modulation spaces indices. These aspects are pretty manifest when transference is applied to negative powers of $\cH$ or the heat diffusion flows $e^{-t\cH^\nu}$, studied in full generality in \cite{bhimani_21}. 
	
	An undoubtedly better approach would rely on a refined time-frequency analysis of the twisted Laplacian and its spectral projections, which have a quite peculiar form: $Q_k f$ operates a \textit{twisted convolution} of $f$ with a suitable Laguerre function $\vp_k$ --- see Section \ref{sec-weyl-twist} for details, and the articles \cite{jeong,koch} for the highly non-trivial problem of obtaining sharp $L^p$ bounds for $Q_k$. It is reasonable to expect that such a twisted structure reverberates somehow into the spectral multipliers, but an explicit characterization of this phenomenon is contrasted by the occurrence of special functions and the limited knowledge on their time-frequency features. Nevertheless, in the case of the heat flow $e^{-t\cL}$, it is possible to obtain the integral representation 
	\begin{equation}
		e^{-t\cL}f(z) = (16\pi^2 \sinh t)^{-d} \irdd e^{-2i \sigma(z,w)} e^{-\frac14 \coth(t) |z-w|^2} f(w) \dd{w},  \quad t \ne 0,
	\end{equation} where $\sigma$ is the standard symplectic form on $\rdd$ --- that is, $\sigma(z,w) = z_2 \cdot w_1 - z_1 \cdot w_2$, for $z=(z_1,z_2)$ and $w=(w_1,w_2)$. This formula can be conveniently viewed as a twisted convolution with a suitable kernel, namely
	\begin{equation}
		e^{-t\cL} f = f \times p_t,  \qquad p_t(w) = (16\pi \sinh t)^{-d} e^{-\frac14 \coth(t) |w|^2}, 
	\end{equation}
	revealing the behind-the-scenes role of the twisted structure of the eigenprojections.
	
	The twisted representation of the heat flow for the special Hermite operator is particularly favourable in the context of phase space analysis. Indeed, the twisted convolution of functions relates via symplectic Fourier transform with the Weyl product of symbols (i.e., the bilinear form $\#$ such that $(a\#b)^\w = a^\w b^\w$) in pretty much the same way the Fourier transform turns the standard convolution into a pointwise product. The Gabor analysis of pseudo-differential operators, hence of their composition, has already led to a comprehensive characterization of the continuity of the Weyl product on modulation and amalgam spaces \cite{CTW,holst}, which can be conveniently exploited here to derive a full range of boundedness results for the twisted Laplacian heat flow $e^{-t\cL}$ (Theorem \ref{maint-heat1}). Boundedness results can be proved as well for the fractional heat flow $e^{-t\cL^\nu}$, $0 < \nu < 1$, essentially by means of subordination to the solutions of the standard heat flow (Theorem \ref{maint-heatfrac}). In short, our results read as follows.
	
	\begin{theorem*}
		For $0<\nu\le 1$ and $t>0$, the special Hermite heat semigroup $e^{-t\cL^\nu}$ is a continuous map $M^{p_1,q_1}(\rdd) \to M^{p_2,q_2}(\rdd)$, $1 \le p_1,q_1,p_2,q_2 \le \infty$, if and only if $q_2 \ge q_1$. In such case, the following bound holds:
		\begin{equation}
			\| e^{-t\cL^\nu}f \|_{M^{p_2,q_2}} \le C_\nu(t) \| f \|_{M^{p_1,q_1}}, 
		\end{equation} where
		\begin{equation}
			C_\nu(t) = \begin{cases}
				C e^{-td^\nu} & (t\ge 1) \\ C t^{-\lambda_\nu} & (0<t\le 1), 
			\end{cases} \qquad \lambda_\nu \coloneqq \max\Big\{ \frac{d}{\nu}\Big(\frac{1}{p_2}-\frac{1}{p_1}\Big),0 \Big\}, 
		\end{equation} for a constant $C>0$ that does not depend on $f$ or $t$. 
	\end{theorem*}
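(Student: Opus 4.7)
The plan is to split the argument according to whether $\nu=1$ or $0<\nu<1$. For $\nu=1$ I would exploit the twisted convolution identity $e^{-t\cL}f=f\times p_t$ recalled in the introduction, which converts the problem into a phase-space analysis of a Weyl pseudo-differential operator with an explicit Gaussian symbol. For $0<\nu<1$ I would then deduce the estimate via Bochner subordination to the standard heat flow. The necessity of $q_2\ge q_1$ is argued separately by constructing appropriate test functions.

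\textbf{Sufficiency for $\nu=1$.} Since twisted convolution intertwines with the Weyl product of symbols through the symplectic Fourier transform $\cFs$, the operator $f\mapsto f\times p_t$ coincides, up to a normalizing constant, with the Weyl quantization of the symbol $a_t=c_d\,\cFs p_t$ on $\rdq$. A direct Gaussian integration produces $a_t(z,\zeta)=C_d(\cosh t)^{-d}\exp\bigl(-\tanh(t)(|z|^2+|\zeta|^2)\bigr)$ after a suitable symplectic change of coordinates. The continuity results for Weyl operators between modulation spaces developed in \cite{CTW,holst}, applied to this Gaussian symbol, give the bound $\|e^{-t\cL}f\|_{M^{p_2,q_2}}\le C_1(t)\|f\|_{M^{p_1,q_1}}$ under the single condition $q_2\ge q_1$, which originates in the one-sided convolution structure of the Weyl product in the $q$-variable. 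Tracking how the Gaussian parameters depend on $t$ yields the dichotomy $C_1(t)\sim e^{-td}$ for $t\ge 1$ (since $(\cosh t)^{-d}\sim 2^d e^{-td}$) and $C_1(t)\sim t^{-\lambda_1}$ for $0<t\le 1$, the latter arising from the spreading factor $\tanh(t)^{-1}\sim t^{-1}$ weighted by the exponent $\max\{d(1/p_2-1/p_1),0\}$.

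\textbf{Subordination for $0<\nu<1$.} Use the Bochner subordination formula $e^{-t\cL^\nu}=\irp\eta_t^\nu(s)\,e^{-s\cL}\,ds$, where $\eta_t^\nu$ is the one-sided stable density characterized by $\irp\eta_t^\nu(s)e^{-s\lambda}\,ds=e^{-t\lambda^\nu}$ for every $\lambda>0$. Minkowski's inequality for Banach-valued integrals combined with the $\nu=1$ bound yields $\|e^{-t\cL^\nu}f\|_{M^{p_2,q_2}}\le \|f\|_{M^{p_1,q_1}}\irp\eta_t^\nu(s)\,C_1(s)\,ds$. For $t\ge 1$ one uses the exponential tail directly, obtaining $\irp\eta_t^\nu(s)\,e^{-sd}\,ds=e^{-td^\nu}$. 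For $0<t\le 1$, the self-similarity $\eta_t^\nu(s)=t^{-1/\nu}\eta_1^\nu(t^{-1/\nu}s)$ together with the local integrability of $s\mapsto s^{-\lambda_1}$ against $\eta_1^\nu$ produces the bound $t^{-\lambda_1/\nu}=t^{-\lambda_\nu}$.

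\textbf{Necessity of $q_2\ge q_1$: the main obstacle.} The hardest step is to rule out boundedness when $q_2<q_1$. I would construct test functions of the form $f_N=\sum_{k=1}^N c_k\,\pi(z_k,\zeta_k)G$, with $G$ a normalized Gaussian on $\rdd$ and phase-space centres $(z_k,\zeta_k)$ arranged on a sufficiently sparse lattice in $\rdq$. By the time-frequency invariance of modulation spaces and the almost-orthogonality of the Gabor transforms of individual summands, $\|f_N\|_{M^{p_1,q_1}}$ is comparable to $\|\{c_k\}\|_{\ell^{q_1}}$ times an $N$-independent constant. Since the heat propagator $e^{-t\cL^\nu}$ has a Schwartz integral kernel with Gaussian decay (explicit for $\nu=1$ and preserved under subordination), it maps the wave packet $\pi(z_k,\zeta_k)G$ to a tightly localized wave packet centred near the same phase-space point, up to controlled phase factors coming from the magnetic structure. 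The $M^{p_2,q_2}$-norm of the image reduces to $\|\{c_k\}\|_{\ell^{q_2}}$ times a fixed constant, and letting $N\to\infty$ forces the inclusion $\ell^{q_1}\subset \ell^{q_2}$, i.e., $q_1\le q_2$. The delicate technical point, and the main obstacle, is maintaining uniform control of the almost-orthogonality error as the lattice separation is chosen in conjunction with the Gaussian spreading produced by the heat operator at fixed $t$.
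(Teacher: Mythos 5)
Your overall architecture—twisted convolution plus Weyl-product continuity for $\nu=1$, subordination for $0<\nu<1$, necessity of $q_2\ge q_1$ argued separately—matches the paper's (Theorems \ref{maint-heat1} and \ref{maint-heatfrac}), but the execution has gaps. On the sufficiency side: your description of the $\nu=1$ step is confused ($\cFs p_t$ lives on $\rdd$, not on $\rdq$, and the CTW/Holst results are bilinear continuity theorems for the product $\#$, not boundedness theorems for Weyl operators with a prescribed symbol). What you actually need is what the paper uses, namely Proposition \ref{prop-cwt} in the form of Remark \ref{rem-modwp}, giving $\|f\times p_t\|_{\X2}\lesssim\|f\|_{\X1}\|p_t\|_{\W0}$, together with a direct computation of $\|p_t\|_{W^{p_0,q_0}}$ for the explicit Gaussian $p_t$. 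More substantively, your subordination argument for $t\ge 1$ is incomplete: Minkowski yields $\irp\eta_t(s)C_1(s)\dd{s}$, which splits into $\int_1^\infty\eta_t(s)e^{-sd}\dd{s}\le e^{-td^\nu}$ (the piece you treat) and the singular piece $\int_0^1\eta_t(s)s^{-\lambda_1}\dd{s}$, which you do not address. The naive Gamma-function estimate on the latter yields only $t^{-\lambda_\nu}$, not $e^{-td^\nu}$; matching the exponential rate for $t\ge1$ requires the super-exponential left-tail decay of the stable density, which you do not invoke. The paper sidesteps this: for $t\ge 1$ it sums $\sum_k e^{-t(d+2k)^\nu}\|\vp_k\|_{\W0}$ directly using the polynomial Laguerre bounds of Proposition \ref{prop-polybound}, and reserves subordination for $0<t\le 1$ only, where the identity \eqref{eq-negpow-id} cleanly delivers the $t^{-\lambda_\nu}$ singularity.

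The necessity of $q_2\ge q_1$ is where your route truly diverges and where the most serious problems lie. Your premise that the kernel of $e^{-t\cL^\nu}$ has ``Gaussian decay \ldots preserved under subordination'' is false for $0<\nu<1$: the $\nu$-stable subordinator has a heavy polynomial right tail $\sim s^{-1-\nu}$, so the subordinated kernel decays only polynomially (just as the fractional heat kernel of $-\Delta$ does) and in particular is not Schwartz; this undermines the tight phase-space localization your almost-orthogonality argument requires. You also acknowledge—correctly—that uniform control of the almost-orthogonality error against the heat-induced spreading is delicate, and you leave it unresolved. The paper avoids all of this through a structural observation. Proposition \ref{prop-sharpiff} shows, via a rank-one operator trick (take $a=W(f,h)$, so that $a^\w g=(2\pi)^{-d/2}\lan g,h\ran f$, whence $(a\#b)^\w h$ is a nonzero multiple of $f$; choosing $f\in M^{1,q_1}(\rd)\setminus\M2(\rd)$ yields a contradiction), that for \emph{any} nonzero $b\in M^1(\rdd)$ the map $a\mapsto a\times b$ fails to extend to $\M1(\rdd)\to\M2(\rdd)$ when $q_2<q_1$. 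Necessity then reduces to the single fact $p_t^{(\nu)}\in M^1(\rdd)$, which the paper proves from the polynomial bound $\|\vp_k\|_{M^1}\lesssim k^{2d^2-1}$ of Proposition \ref{prop-polybound} together with the rapid decay of $e^{-t(d+2k)^\nu}$. This is the key ingredient your proposal is missing; once in hand, the wave-packet construction becomes unnecessary.
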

	
	Note that the decay for large times is sharp --- it is enough to test the bound in the ground state case where $f=\Phi_{0,0}$. Moreover, having in mind that the phase space effect of $\cL$ is roughly comparable to that of a partial harmonic oscillator, a singularity for small times is expected as well, as heuristically illustrated in \cite{bhimani_21} concerning the heat flow of $\cH$. Let us also stress that this heuristic link relying on metaplectic equivalence barely agrees with the occurrence of the condition $q_2 \ge q_1$ --- so that $e^{-t\cL^\nu}f$ can have at best the same regularity of $f$. The necessity of this constraint is actually due to the underlying twisted structure $e^{-t\cL^\nu} f = f\times p_t^{(\nu)}$. Even if an explicit formula for $p_t^{(\nu)}$ seems to be unachievable unless $\nu=1$, cheap time-frequency estimates for the involved Laguerre functions are enough to show that the fractional twisted heat propagator belongs to $M^1(\rdd)$. As a result, sharpness of the index constraint follows from the failure of the map $f \mapsto f \times p_t^{(\nu)}$ to be bounded $\M1(\rdd) \to \M2(\rdd)$ if $q_2 < q_1$ --- see Proposition \ref{prop-sharpiff} and \cite[Proposition 3.3]{holst} for details. These remarks illustrate that, as already anticipated, a phase space analysis relying on the twisted structure leads to a much more complete picture than the one that can be obtained via transference from the heat flows $e^{-t\cH^\nu}$.  
	
	The phase space continuity estimates for $e^{-t\cL^\nu}$ just illustrated extend via subordination and embeddings to Lebesgue spaces $L^p \to L^q$ with $1 \le p \le q \le \infty$, cf.\ Corollary \ref{cor-fracheatL-leb}. In the case where $p \le 2 \le q$ the bound reads
	\begin{equation}
		\| e^{-t\cL^\nu}f \|_{L^q} \le C_{p,q}^{(\nu)}(t) \| f \|_{L^p}, \qquad C_{p,q}^{(\nu)}(t) = \begin{cases}
			C e^{-td^\nu} & (t\ge 1) \\ C t^{-\frac{d}{\nu}(\frac{1}{p}-\frac{1}{q})} & (0<t\le 1), 
		\end{cases} 
	\end{equation} encompassing known contractivity results already proved in the case $\nu=1$ in \cite{wong_05,wong_03}. A subordination argument yields the same estimates for negative powers $\cL^{-\nu}$, $\nu > 0$. 
	
	\subsection{Comments and future work} A phase space analysis of spectral multipliers for the twisted Laplacian is the first step towards a thorough analysis of the corresponding evolution equations, possibly in presence of nonlinearities or potential perturbations. Inspired by the similar quest for the standard Schr\"odinger equation \cite{CR_book}, we expect to build upon the content of the present note to obtain a wide array of results in connection with problems of local/global well-posedness (also including Strichartz estimates) on modulation and amalgam spaces --- see \cite{bhimani_22,bhimani_21} for examples in this vein for the Hermite operator. 
	
	At a more fundamental level, a deeper understanding of the time-frequency distribution of the eigenfunctions of $\cL$ would be as desirable as it is challenging, but it could pave the way to many interesting results --- including sharp bounds on modulation spaces for the eigenprojections $Q_k$ in the spirit of \cite{jeong,koch} or extension of the boundedness results for oscillating multipliers to a broader range of modulation spaces, thus bypassing the restrictions coming along with transference. A related, far-reaching goal would be a full characterization of the space of $M^p$-bounded spectral multipliers for $\cL$. In fact, as already discussed, this problem is largely open even for the harmonic oscillator --- to the best of our knowledge, the results in \cite{bhimani_19} and the novel ones in Section \ref{sec-transf} are the only ones currently available, and there is reason to believe that they can be improved, at least in connection with the regularity of the multipliers (see Remark \ref{rem-mult-reg}) or the time dependence of the constants. We plan to pursue this intriguing quest in future work. 
	
	Another interesting problem would be to elucidate the connection between modulation spaces and regularity spaces of Lipschitz-H\"older type or atomic decomposition spaces of Besov and Triebel-Lizorkin type associated with the twisted Laplacian --- see e.g., \cite{cardona} for a recent example of their use to obtain sharp $L^p$ bounds for the Schr\"odinger equation. It is well known that modulation and amalgam spaces provide a robust family of functional spaces with non-trivial embeddings with standard Lebesgue, Sobolev or potential spaces \cite{CR_book} --- notably including those associated with the Hermite operator \cite{janssen}. 
	
	Finally, we believe that the phase space analysis of ``exotic'' differential operators and related flows can lead to interesting results from an original perspective, especially in the case where the standard machinery of pseudo-differential calculus on modulation spaces cannot be directly called into play. These obstacles stimulate the development of novel lines of attack. Besides the twisted Laplacian, there are several operators of interest in harmonic analysis (such as the Grushin operators \cite{grushin}) of which very little is known from the time-frequency side, which we plan to investigate in future work. 
	
	\section{Preparation}\label{sec-prel}
	
	\subsection{Notation} The set of natural numbers is $\bN=\{0,1,2\ldots\}$. We write $2\bN = \{ 2n : n \in \bN \}$ to denote the set of even numbers. We write $A \hookrightarrow B$ to emphasize that the embedding of $A$ into $B$ is continuous. 
	
	We make use of the symbol $X \lesssim_\lambda Y$ if the underlying inequality holds up to a positive constant factor (i.e., $X\le C_\lambda Y$) that does not depend on $X$ or $Y$ but may depend on the parameter $\lambda$. We write $X \asymp_\lambda Y$ if $X$ and $Y$ are of comparable size, namely both $X \lesssim_\lambda Y$ and $X\lesssim_\lambda Y$ hold. 
	
	We introduce a number of operators acting on $f\colon \rd \to \bC$: 
	\begin{itemize}
		\item[--] The translation $T_x$ by $x \in \rd$: $T_x f(y) = f(y-x)$.
		\item[--] The modulation $M_\xi$ by $\xi \in \rd$: $M_\xi f(y) = (2\pi)^{-d/2} e^{i \xi \cdot y} f(y)$.
		\item[--] The time-frequency shift $\pi(x,\xi) = M_\xi T_x$.
		\item[--] The reflection: $\cI f(y) = f^\vee(y) = f(-y)$.
	\end{itemize}
	The $L^2$ inner product $\lan f,g \ran = \ird f(y)\overline{g(y)}\dd{y}$ extends to a duality pairing between temperate distributions $f\in \cS'(\rd)$ and functions in the Schwartz class $g \in \cS(\rd)$, denoted by the same bracket. 
	
	Recall that the tensor product $f_1 \otimes f_2$ of two temperate distributions $f_1,f_2 \in \cS'(\rd)$ is the unique element of $\cS'(\rdd)$ such that 
	\begin{equation}
		\lan f_1\otimes f_2, g_1 \otimes g_2 \ran = \lan f_1,g_1 \ran \lan f_2,g_2 \ran, \quad \forall g_1,g_2 \in \cS(\rd),
	\end{equation} where $g_1\otimes g_2 \in \cS(\rdd)$ is defined as usual by $g_1 \otimes g_2(x,y) = g_1(x)g_2(y)$, $(x,y) \in \rdd$.  We remark here for future reference that, for all $z=(z_1,z_2)$ and $w=(w_1,w_2)$ in $\rdd$, 
	\begin{equation}\label{eq-tensor-pi}
		\pi(z,w)(g_1 \otimes g_2) = \pi(z_1,w_1)g_1 \otimes \pi(z_2,w_2)g_2.
	\end{equation}
	
	Given two continuous linear operators $T_1,T_2 \colon \cS'(\rd) \to \cS'(\rd)$, we denote by $T_1 \otimes T_2$ the unique operator on $\cS'(\rdd)\to \cS'(\rdd)$ such that
	\begin{equation}
		(T_1\otimes T_2)(u_1 \otimes u_2) = T_1u_1 \otimes T_2u_2, \quad \forall u_1,u_2 \in \cS'(\rd). 
	\end{equation}
	
	Recall that $\rdd$ is a symplectic vector space endowed with the standard form 
	\begin{equation}\label{eq-sympf}
		\sigma(z,w) \coloneqq Jz \cdot w, \qquad J\coloneqq \begin{bmatrix}
			O & I \\ -I & O
		\end{bmatrix},
	\end{equation} where $O$ and $I$ denote respectively the $d\times d$ null and identity matrix. In particular, if $z=(x,\xi)$ and $w=(u,v)$ belong to $\rd \times \rd$, then $\sigma(z,w)=\xi \cdot u - x \cdot v$. 
	
	\subsection{Tools from Gabor analysis}
	The Fourier transform is normalized as follows:
	\begin{equation}
		\cF f(\xi)= \hat{f}(\xi) \coloneqq (2\pi)^{-d/2} \ird e^{-i\xi \cdot x}f(x) \dd{x}.
	\end{equation}
	It is useful to introduce a (unitarily dilated) symplectic version of the Fourier transform for functions defined on even-dimensional spaces such as $\rdd$:
	\begin{equation}\label{eq-def-sympfou}
		\cFs f(\zeta) = f_\sigma(\zeta) \coloneqq 2^d\cF(2J\zeta) = \pi^{-d} \irdd e^{-2i\sigma(\zeta,z)}f(z) \dd{z}.
	\end{equation} Note that $\cFs^{-1}=\cFs$, hence $\cFs^2$ coincides with the identity operator. 
	
	The Gabor transform of $f \in \cS'(\rd)$ with respect to a window $g \in \cS(\rd)\smo$ is defined by
	\begin{equation}
		V_g f(x,\xi) \coloneqq \lan f, M_{\xi}T_x g\ran = \cF(f \cdot \overline{T_x g})(\xi) = (2\pi)^{-d/2} \ird e^{-i\xi \cdot y} f(y)\overline{g(y-x)} \dd{y}. 
	\end{equation}
	Whenever concerned with functions on even-dimensional spaces, it is natural to consider as well the symplectic Gabor transform: if $g \in \cS(\rdd)\smo$, 
	\begin{equation}
		\cV_g f(z,\zeta) \coloneqq \cFs (f \cdot \overline{T_z g})(\zeta) = \pi^{-d} \irdd e^{-2 i \sigma(\zeta,w)}f(w)\overline{g(w-z)} \dd{w} = 2^d V_g f(z,2J\zeta).  
	\end{equation}
	
	The phase space summability of the Gabor transform gives rise to modulation spaces. Given $1 \le p,q \le \infty$ and $g \in \cS(\rd)$, the modulation space $M^{p,q}(\rd)$ consists of all the distributions $f \in \cS'(\rd)$ such that
	\begin{equation}\label{eq-modsp-norm}
		\| f \|_{M^{p,q}_{\text{std}}} \coloneqq \Big( \ird \Big( \ird |V_g f(x,\xi)|^p \dd{x} \Big)^{q/p} \dd{\xi} \Big)^{1/q} < \infty,
	\end{equation} with obvious modifications in the case where $p=\infty$ or $q=\infty$. We write $M^p(\rd)$ if $q=p$. 
	
	Modulation spaces are a family of Banach spaces such that $\cS(\rd)$ is a dense subset of any $M^{p,q}$ with $\max\{p,q\}<\infty$, and using different windows to compute the Gabor transform results into equivalent norms. Moreover, we have the inclusions $M^{p_1,q_1}(\rd) \subseteq M^{p_2,q_2}(\rd)$ if $p_1\le p_2$ and $q_1 \le q_2$, as well as the identification $M^2(\rd) = L^2(\rd)$ (with equivalent norms).
	
	Swapping the integration order in \eqref{eq-modsp-norm} gives rise to a different family of spaces. Precisely, $W^{p,q}(\rd)$ consists of $f \in \cS'(\rd)$ such that
	\begin{equation}\label{eq:amalgam-std-norm}
		\| f \|_{W^{p,q}_{\text{std}}} \coloneqq \Big( \ird \Big( \ird |V_g f(x,\xi)|^p \dd{\xi} \Big)^{q/p} \dd{x} \Big)^{1/q} < \infty. 
	\end{equation} 
	The connection between the latter and modulation spaces is given by the Fourier transform: in view of the pointwise identity \begin{equation}\label{eq-stft-fou}
		V_{\hat{g}}\hat{f}(\xi,-x) = e^{i x\cdot \xi} V_g f(x,\xi),
	\end{equation} we have
	\begin{equation}
		f \in M^{p,q}(\rd) \Longleftrightarrow \hat{f} \in W^{p,q}(\rd). 
	\end{equation} Note that if $q=p$ then $M^p(\rd) = W^p(\rd)$, and these are Fourier-invariant spaces. 
	
	It is worth emphasizing that the spaces $W^{p,q}(\rd)$ considered here can be framed into the general theory of Wiener amalgam spaces, introduced by Feichtinger in \cite{fei_81}. In particular, from \eqref{eq:amalgam-std-norm} we note that $\| f \|_{W^{p,q}_{\text{std}}} \asymp \Big( \ird \|f \cdot \overline{T_x g}\|_{\cF L^p}^q \dd x \Big)^{1/q}$, hence $W^{p,q}(\rd)$ coincides with the amalgam space $W(\cF L^p,L^q)(\rd)$ of distributions that locally have the same regularity of the Fourier transform of a $L^p$ function and enjoy global $L^q$ decay. An equivalent discrete characterization resorting to bounded uniform partitions of unity can be provided as well, see again \cite{fei_81} for further details.
	
	Useful embeddings for standard Lebesgue and Fourier-Lebesgue spaces are known \cite{sugimoto}: if $p'$ denotes the H\"older conjugate index associated with $p \ge 1$ (namely, $1/p+1/p'=1$), then
	\begin{equation}\label{eq-embed-lp-amalg}
		W^{p_1,p}(\rd) \hookrightarrow L^p (\rd) \hookrightarrow W^{p_2,p}(\rd), 
	\end{equation} for all $p_1 \le \min\{p,p'\}$ and $p_2 \ge \max\{p,p'\}$. 
	
	In the case of functions and distributions on $\rdd$ it is natural to introduce symplectic versions of modulation and amalgam norms. By resorting to the symplectic Gabor transform, we consider
	\begin{equation}
		\| f \|_{M^{p,q}_{\text{sym}}} \coloneqq \Big( \irdd \Big( \irdd |\cV_g f(z,\zeta)|^p \dd{z} \Big)^{q/p} \dd{\zeta} \Big)^{1/q},
	\end{equation}
	\begin{equation}
		\| f \|_{W^{p,q}_{\text{sym}}} \coloneqq \Big( \irdd \Big( \irdd |\cV_g f(z,\zeta)|^p \dd{\zeta} \Big)^{q/p} \dd{z} \Big)^{1/q}. 
	\end{equation}
	It is easy to realize that such norms are equivalent to those already introduced on $M^{p,q}(\rdd)$ and $W^{p,q}(\rdd)$ respectively. In order to lighten the notation below, we assume the following convention hereinafter:  if $f \in \cS'(\bR^n)$, then we set
	\begin{equation}
		\|f\|_{M^{p,q}} \coloneqq \begin{cases} \|f\|_{M^{p,q}_{\text{sym}}} & (n \text{ even}) \\
			\|f\|_{M^{p,q}_{\text{std}}} & (n \text{ odd}), \end{cases}  \qquad \|f\|_{W^{p,q}} \coloneqq \begin{cases} \|f\|_{W^{p,q}_{\text{sym}}} & (n \text{ even}) \\
			\|f\|_{W^{p,q}_{\text{std}}} & (n \text{ odd}). \end{cases}
	\end{equation}
	
	We emphasize that, in view of the identity $\cV_{g_\sigma} f_\sigma (\zeta,z)=e^{2i \sigma(z,\zeta)}\cV_g f(z,\zeta)$, we also have the characterization $W^{p,q}(\rdd) = \cFs M^{p,q}(\rdd)$, that is
	\begin{equation}\label{eq-modwiesympfou}
		f \in M^{p,q}(\rdd) \Longleftrightarrow f_\sigma \in W^{p,q}(\rdd). 
	\end{equation}
	
	The phase space summability of a function can be further refined using suitable weight functions. For our purposes, it is enough to consider the family of polynomial weights $v_s(y) \coloneqq \lan y \ran^s$, for $s \in \bR$ and $y \in \rd$, where $\lan y \ran \coloneqq (1+|y|^2)^{1/2} \asymp 1+|y|$. The modulation space $M^{p,q}_{s}(\rd)$ is defined as before, the (standard) norm being 
	\begin{equation}
		\| f \|_{M^{p,q}_{s}} \coloneqq \Big( \ird \Big( \ird |V_g f(x,\xi)|^p v_s((x,\xi))^p \dd{x} \Big)^{q/p} \dd{\xi} \Big)^{1/q}. 
	\end{equation} 
	Similar definitions are given for $W^{p,q}_{s}(\rd)$ and the corresponding symplectic versions for functions on $\rdd$ --- along with the same convention on the norms agreed in the unweighted case. 
	
	To avoid separate discussions and workarounds due to the lack of density of the Schwartz class in $M^{p,q}$ and $W^{p,q}$ with $p=\infty$ or $q=\infty$, we agree that an operator $T \colon \cS(\rn) \to \cS'(\rn)$ is said to be bounded $\X1_{s_1}(\rn) \to \X2_{s_2}(\rn)$ if there exists a constant $C>0$ such that
	\begin{equation}
		\| Tf \|_{\X2_{s_2}} \le C \|f\|_{\X1_{s_1}}, \qquad \forall \, f \in \cS(\rn), 
	\end{equation} where $X^{p,q}_s$ denotes either $M^{p,q}_s$ or $W^{p,q}_s$ for every possible choice of $1 \le p,q \le \infty$ and $s \in \bR$. 
	
	Recall that the Schwartz kernel theorem states that a linear operator $T \colon \cS(\rn) \to \cS'(\rn)$ is continuous if and only if there exists a (unique) distribution $K_T \in \cS'(\rnn)$ such that 
	\begin{equation}
		\lan T f,g \ran = \lan K_T, g \otimes \overline{f} \ran, \qquad f,g \in \cS(\rn).
	\end{equation} Results in the same vein have been proved for modulation spaces, and we isolate from \cite[Corollary 8]{balazs} and \cite[Theorem 3.6]{CN_kernel} a characterization of boundedness that plays a key role below. 
	
	\begin{proposition}\label{prop-kernelt}
		Let $T \colon \cS(\rdd) \to \cS'(\rdd)$ be a linear operator with kernel $K_T \in \cS(\rn)$. Given $a,b \in \bR$: 
		\begin{enumerate}[label=(\roman*)]\setlength\itemsep{0.5cm}
			\item The operator $T$ is bounded $M^1_a(\rn) \to M^1_{b}(\rn)$ if and only if, for some (hence any) $G \in \cS(\rnn)\smo$, 
			\begin{equation}\label{eq-ker-I1}
				I_1 \coloneqq \sup_{(u,v) \in \rn \times \rn} \lan (u,v) \ran^{b} \int_{\rn \times \rn} |V_G K_T(z,u,w,-v)| \lan(z,w)\ran^{-a}\dd{z} \dd{w} < \infty, 
			\end{equation} and in this case we have $\|T\|_{M^1_a \to M^1_{b}} \asymp I_1$. 
			\item The operator $T$ is bounded $M^\infty_a(\rn) \to M^\infty_{b}(\rn)$ if and only if, for some (hence any) $G \in \cS(\rnn)\smo$, 
			\begin{equation}\label{eq-ker-Iinf}
				I_\infty \coloneqq \sup_{(u,v) \in \rn \times \rn} \lan (u,v) \ran^{b} \int_{\rn \times \rn} |V_G K_T(u,z,v,-w)| \lan(z,w)\ran^{-a}\dd{z} \dd{w} < \infty,
			\end{equation} and in this case we have  $\|T\|_{M^\infty_a \to M^\infty_{b}} \asymp I_\infty$.	
		\end{enumerate}
	\end{proposition}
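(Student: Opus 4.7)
The proposition is a weighted refinement of the Feichtinger--Gr\"ochenig type kernel theorems for modulation spaces established in \cite{balazs,CN_kernel}. The plan is to re-derive it by passing to the Gabor transform side: the STFT of $K_T$ is identified with the matrix coefficients of $T$ in a continuous Gabor system, after which boundedness of $T$ reduces to boundedness of an integral operator on weighted Lebesgue spaces, which Schur's test handles cleanly.

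\emph{Step 1 (kernel-matrix identification).} Choose $G = g_1 \otimes \overline{g_2}$ with $g_1,g_2 \in \cS(\rn)\smo$. Using the tensor product identity \eqref{eq-tensor-pi} and the Schwartz-kernel pairing $\langle K_T, h \otimes \overline{f}\rangle = \langle Tf, h\rangle$, unfolding the definition of the STFT on $\rnn$ yields, up to a constant depending only on $g_1,g_2$,
\begin{equation}
|V_G K_T(z_1, z_2; w_1, w_2)| \asymp |V_{g_1}(T\pi(z_2, -w_2) g_2)(z_1, w_1)|.
\end{equation}
Specialising the arguments to $(z, u, w, -v)$ and $(u, z, v, -w)$ transforms the integrands of $I_1$ and $I_\infty$ into two reparametrisations of the Gabor matrix $|\langle T\pi(u,v) g_2, \pi(z,w) g_1\rangle|$, the swap of roles precisely being the one appearing in the statement.

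\emph{Step 2 (reduction to Schur's test).} The continuous Gabor reproducing formula $f = c \iint V_{g_2} f(u,v)\, \pi(u,v) g_2 \, du\, dv$ yields
\begin{equation}
V_{g_1}(Tf)(z,w) = c \iint V_{g_2} f(u,v)\, V_G K_T(z, u, w, -v) \, du\, dv,
\end{equation}
so that $T$ is represented on the STFT side as the integral operator with kernel $\mathcal{K}(z,w;u,v) = V_G K_T(z, u, w, -v)$. Since $\|\cdot\|_{M^1_s}$ and $\|\cdot\|_{M^\infty_s}$ are equivalent, respectively, to the $v_s$-weighted $L^1$ and $L^\infty$ norms of the STFT, the boundedness of $T$ on $M^1_a \to M^1_b$ (resp. $M^\infty_a \to M^\infty_b$) is equivalent to the boundedness of $\mathcal{K}$ between the corresponding weighted Lebesgue spaces on $\rnn$. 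For an integral operator with non-negative kernel, Schur's test is both necessary and sufficient for $L^1 \to L^1$ and for $L^\infty \to L^\infty$ continuity, and its two dual forms (sup over the input variable with integration over the output, and vice versa) account precisely for the two different reparametrisations $V_G K_T(z,u,w,-v)$ in (i) and $V_G K_T(u,z,v,-w)$ in (ii). This gives the quantitative equivalences $\|T\|_{M^1_a \to M^1_b} \asymp I_1$ and $\|T\|_{M^\infty_a \to M^\infty_b} \asymp I_\infty$.

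The main obstacle is the careful bookkeeping of phases and normalising constants in Step 1, as well as the verification that the continuous reproducing formula can be applied in the present weighted setting when $p$ or $q$ is infinite --- a density issue that is bypassed here by testing the boundedness on $f \in \cS(\rn)$, in accordance with the convention on $M^{p,q}_s$-continuity agreed earlier in this section. Necessity can equivalently be obtained by a direct test: evaluating the boundedness on $f = \pi(u_0,v_0) g_2$ and using $\|\pi(u_0,v_0) g_2\|_{M^{p,q}_s} \asymp \langle (u_0,v_0)\rangle^s$ reads off the pointwise bounds on the Gabor matrix that underlie $I_1$ and $I_\infty$.
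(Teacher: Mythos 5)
The paper does not actually prove this proposition: it is quoted from \cite[Corollary 8]{balazs} and \cite[Theorem 3.6]{CN_kernel}, so there is no in-paper argument to compare against, and I evaluate your sketch on its own terms. Step 1 (the identification $V_G K_T(z_1,z_2,w_1,w_2) \propto \langle T\pi(z_2,-w_2)g_2,\,\pi(z_1,w_1)g_1\rangle$ for $G=g_1\otimes\overline{g_2}$, obtained from \eqref{eq-tensor-pi} and the kernel pairing) is correct, and the \emph{sufficiency} half of Step 2 (Gabor reproducing formula $\Rightarrow$ integral representation of $V_{g_1}(Tf)$ $\Rightarrow$ Schur's test on the weighted $L^1$/$L^\infty$ level) is sound.

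The gap is in the \emph{necessity} direction, and the sentence "the boundedness of $T$ on $M^1_a\to M^1_b$ (resp.\ $M^\infty_a\to M^\infty_b$) is equivalent to the boundedness of $\mathcal K$ between the corresponding weighted Lebesgue spaces" is not justified as written. The map $V_g$ embeds $M^{1}_s$ (resp.\ $M^{\infty}_s$) isometrically onto a \emph{proper closed subspace} of the weighted $L^1$ (resp.\ $L^\infty$) space on $\rnn$; boundedness of $T$ therefore only gives boundedness of $\mathcal K$ restricted to $\mathrm{range}(V_g)$, which does not trigger the necessity side of Schur's test. One must insert the reproducing-kernel projection $R=V_gV_g^\ast$ and observe that $\mathcal K\circ R=c\,\mathcal K$ (by applying the resolution of identity $\int\langle\pi(\mu)g,\pi(\lambda)g\rangle\,\pi(\lambda)g\,d\lambda=c'\pi(\mu)g$ inside the Gabor matrix), so that boundedness of $\mathcal K\circ R$ on the full weighted Lebesgue space --- which is what the $M$-boundedness of $T$ genuinely yields --- transfers to $\mathcal K$ itself; Schur necessity then applies. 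Your fall-back "direct test" is not a substitute: evaluating $T$ on a single time-frequency shift $f=\pi(u_0,v_0)g_2$ gives the pointwise Gabor-matrix bound $|\langle T\pi(z,w)g,\pi(u,v)g\rangle|\lesssim\langle(z,w)\rangle^a\langle(u,v)\rangle^{-b}$, and inserting this bound into either $I_1$ or $I_\infty$ produces the divergent integral $\int_{\rn\times\rn}1\,dz\,dw$; thus the pointwise estimate from testing on Gaussians does not establish the integrated conditions \eqref{eq-ker-I1} and \eqref{eq-ker-Iinf}.
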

	
	Another result that is needed below concerns complex interpolation of modulation spaces, which behaves as detailed in the following result --- see \cite[Proposition 2.3.17]{CR_book}. 
	
	\begin{proposition}\label{prop-modsp-interpol}
		Consider $1 \le p_1,p_2,q_1,q_2 \le \infty$ with $q_2<\infty$ and $r_1,r_2,s_1,s_2 \in \bR$. Let $T$ be a linear operator and  $A_1,A_2>0$ constants such that
		\begin{equation}
			\|T f \|_{M^{p_1,q_1}_{s_1}} \le A_1 \|f\|_{M^{p_1,q_1}_{r_1}}, \qquad \forall \, f \in M^{p_1,q_1}_{r_1}(\rd),
		\end{equation} 
		\begin{equation}
			\|T f \|_{M^{p_2,q_2}_{s_2}} \le A_2 \|f\|_{M^{p_2,q_2}_{r_2}}, \qquad \forall \, f \in M^{p_2,q_2}_{r_2}(\rd). 
		\end{equation}
		Then, for all $0<\theta<1$ and $1 \le p, q \le \infty$, $r,s \in \bR$ such that 
		\begin{equation}
			\frac{1}{p} = \frac{1-\theta}{p_1}+\frac{\theta}{p_2}, \quad \frac{1}{q} = \frac{1-\theta}{q_1}+\frac{\theta}{q_2}, \quad r=r_1(1-\theta)+r_2\theta, \quad s=s_1(1-\theta)+s_2\theta,
		\end{equation} there exists a constant $C>0$ independent of $T$ such that 
		\begin{equation}
			\|T f \|_{M^{p,q}_{s}} \le CA_1^{1-\theta}A_2^\theta \|f\|_{M^{p,q}_{r}}, \qquad \forall \, f \in M^{p,q}_r(\rd). 
		\end{equation}
	\end{proposition}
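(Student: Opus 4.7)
The plan is to reduce the statement to the classical complex interpolation theorem for weighted mixed-norm Lebesgue spaces, exploiting the fact that every modulation space $M^{p,q}_s(\rd)$ is a retract of $L^{p,q}_{v_s}(\rdd)$ via the Gabor transform and its adjoint.

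First I would fix a non-zero window $g \in \cS(\rd)$ and introduce the analysis operator $C_g f \coloneqq V_g f$ together with the synthesis operator $D_g F \coloneqq \|g\|_{L^2}^{-2}\irdd F(x,\xi)\,\pi(x,\xi)g\,dx\,d\xi$. The STFT inversion formula gives $D_g C_g = \mathrm{id}$ on $\cS'(\rd)$; by the very definition of the modulation norm, $C_g \colon M^{p,q}_s(\rd) \to L^{p,q}_{v_s}(\rdd)$ is an isometry (up to the choice of window), while the pointwise identity $|V_g(\pi(x,\xi)g)(u,v)| = |V_g g(u-x,v-\xi)|$ combined with Minkowski's and Young's inequalities shows that $D_g \colon L^{p,q}_{v_s}(\rdd) \to M^{p,q}_s(\rd)$ is bounded, uniformly across the relevant ranges of $(p,q)$ and $s$.

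Next I would transfer the hypotheses to phase space by introducing the auxiliary operator $\wt T \coloneqq C_g \circ T \circ D_g$, which inherits boundedness $L^{p_i,q_i}_{v_{r_i}}(\rdd) \to L^{p_i,q_i}_{v_{s_i}}(\rdd)$ with norm $\lesssim A_i$ for $i=1,2$. I would then invoke the classical complex interpolation identity
\begin{equation}
[L^{p_1,q_1}_{v_{r_1}}(\rdd), L^{p_2,q_2}_{v_{r_2}}(\rdd)]_\theta = L^{p,q}_{v_r}(\rdd),
\end{equation}
with the prescribed relations among the indices and weights. This identity (cf.\ Bergh--L\"ofstr\"om) holds provided $q_2 < \infty$, which is precisely the assumption present in the statement and ensures density of simple functions so that Calder\'on's complex method applies without obstructions. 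Applying this to $\wt T$ yields $\|\wt T F\|_{L^{p,q}_{v_s}} \lesssim A_1^{1-\theta} A_2^\theta \|F\|_{L^{p,q}_{v_r}}$.

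To conclude, I would use the retract identity $T = D_g \wt T C_g$ on $M^{p,q}_r(\rd)$, which follows by inserting $D_g C_g = \mathrm{id}$ on both sides of $T$, to obtain
\begin{equation}
\|Tf\|_{M^{p,q}_s} \le \|D_g\|\,\|\wt T\|_{L^{p,q}_{v_r}\to L^{p,q}_{v_s}}\,\|C_g f\|_{L^{p,q}_{v_r}} \lesssim A_1^{1-\theta} A_2^\theta \|f\|_{M^{p,q}_r}.
\end{equation}
The main obstacle is the complex interpolation identity for weighted mixed-norm Lebesgue spaces: although classical, it must be verified carefully for the polynomial weights $v_{r_i}$ and for the subtle behaviour of Calder\'on's brackets at infinite exponents. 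The hypothesis $q_2 < \infty$ is exactly what rules out the pathologies of the complex method in $L^\infty$-type spaces, and this constraint transfers, via the retract structure, into the corresponding constraint on modulation spaces.
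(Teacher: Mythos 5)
The paper does not prove this proposition; it cites it directly from \cite[Proposition 2.3.17]{CR_book}. Your retract argument — factoring through the Gabor analysis/synthesis operators $C_g,D_g$ to reduce the question to complex interpolation of weighted mixed-norm Lebesgue spaces on $\rdd$ and then transporting the bound back via $T=D_g\,\widetilde T\,C_g$ — is the standard proof of this fact and is, in all likelihood, exactly the one given in the cited reference, so the approaches match. One small point worth tightening: the interpolation identity $[L^{p_1,q_1}_{v_{r_1}},L^{p_2,q_2}_{v_{r_2}}]_\theta=L^{p,q}_{v_r}$ should really be traced through the iterated (vector-valued) form, $L^{q}_{\xi}([L^{p_1}_{v_{r_1}},L^{p_2}_{v_{r_2}}]_\theta)$, invoking Bergh--L\"ofstr\"om's Theorem 5.1.2 for the outer exponent (this is precisely where $q_2<\infty$ is used) and a weighted Stein--Weiss/Calder\'on identity for the inner one; appealing to ``density of simple functions'' alone glosses over the fact that $L^\infty$-type endpoints are permitted in the inner exponents and that the weight changes between the two endpoints.
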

	
	We conclude this brief review of tools from time-frequency analysis by mentioning some alternative (quadratic) phase space representations. In particular, the ambiguity transform of $f,g \in \cS(\rd)$ is defined by
	\begin{equation}
		A(f,g)(x,\xi) \coloneqq (2\pi)^{-d} \ird e^{-i\xi \cdot y} f(y+x/2) \overline{g(y-x/2)} \dd{y},
	\end{equation}
	while the so-called Wigner transform is given by
	\begin{equation}
		W(f,g)(x,\xi) \coloneqq (2\pi)^{-d} \ird e^{-i\xi \cdot y} f(x+y/2) \overline{g(x-y/2)} \dd{y}.
	\end{equation}
	We write $A(f)$ and $W(f)$ if $g=f$. It is useful to highlight the relations among the different transforms. Setting $z=(x,\xi)\in \rdd$, we have 
	\begin{equation}
		A(f,g)(x,\xi)= e^{i\frac{x\cdot \xi}{2}} V_g f(x,\xi), \qquad A(f,g)(z)=2^{-d} \cFs W(f,g)(z/2) = \cF W(f,g)(Jz). 
	\end{equation}
	
	The following continuity result will be used below --- see \cite{CN_sharpint} for additional details.
	\begin{proposition}\label{prop-amb-boundwpq}
		Let $1 \le p_0,q_0,p_1,q_1,p_2,q_2 \le \infty$ be indices such that
		\begin{equation}
			q_2 \ge \max\{p_0,q_0,p_1,q_1\}, \qquad \min\Big\{\frac{1}{p_0}+\frac{1}{p_1},\frac{1}{p_0}+\frac{1}{p_1} \Big\}\ge\frac{1}{p_2}+\frac{1}{q_2}.
		\end{equation}
		Then, for all $f \in \M0(\rd)$ and $g \in \M1(\rd)$, 
		\begin{equation}
			\|A(f,g)\|_{\W2} \asymp \|W(f,g)\|_{\M2} \lesssim \|f\|_{\M0}\|g\|_{\M1}. 
		\end{equation}
	\end{proposition}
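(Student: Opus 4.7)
The plan is to treat the two claims separately: first the equivalence between the amalgam norm of the ambiguity transform and the modulation norm of the Wigner transform, then the bilinear estimate itself.

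For the equivalence, I would simply exploit the identity $A(f,g)(z) = 2^{-d} \cFs W(f,g)(z/2)$ already recorded in this section. By the Fourier characterization \eqref{eq-modwiesympfou}, the symplectic Fourier transform turns the symplectic $M^{p_2,q_2}$-norm into the symplectic $W^{p_2,q_2}$-norm (with equivalent constants), and the routine dilation invariance of symplectic Wiener amalgam norms under $z \mapsto z/2$ absorbs the scaling. This step is essentially bookkeeping.

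The main content is the bilinear bound $\|W(f,g)\|_{M^{p_2,q_2}} \lesssim \|f\|_{M^{p_0,q_0}}\|g\|_{M^{p_1,q_1}}$. My approach would use the standard factorization of the Gabor transform of a Wigner distribution against a Wigner-type window: for non-zero $\varphi,\psi \in \cS(\rd)$ one has an identity of the form
\begin{equation}
|\cV_{W(\varphi,\psi)} W(f,g)(z,\zeta)| = |V_\varphi f(\Lambda_1(z,\zeta))|\cdot|V_\psi g(\Lambda_2(z,\zeta))|,
\end{equation}
where $\Lambda_1,\Lambda_2\colon \rdq \to \rdd$ are explicit linear bijections permuting and mixing the position/momentum slots of $(z,\zeta)$. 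Since $W(\varphi,\psi) \in \cS(\rdd)\smo$ is an admissible window, this identity reduces the computation of $\|W(f,g)\|_{M^{p_2,q_2}}$ to estimating a mixed-norm integral in which the arguments of $V_\varphi f$ and $V_\psi g$ appear in coupled but linear combinations of the phase space variables.

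To conclude I would apply Minkowski's integral inequality together with Hölder's inequality in mixed norms, both in the $z$- and $\zeta$-integrations. The assumption $q_2 \ge \max\{p_0,q_0,p_1,q_1\}$ is exactly what lets the outer $q_2$-integration be pushed inside, while the balance condition $1/p_2+1/q_2 \le \min\{1/p_0+1/p_1,\,1/q_0+1/q_1\}$ is the matching required to split the resulting product into $\|V_\varphi f\|_{L^{p_0,q_0}} \|V_\psi g\|_{L^{p_1,q_1}} \asymp \|f\|_{M^{p_0,q_0}}\|g\|_{M^{p_1,q_1}}$. The delicate and error-prone step — the main obstacle in my view — is to track the linear change of variables induced by $\Lambda_1,\Lambda_2$ so that the integrations genuinely decouple into one factor depending only on the ``$f$-variables'' and one depending only on the ``$g$-variables''; if this decoupling is performed carelessly, one ends up imposing strictly stronger index conditions than the sharp ones stated in the proposition.
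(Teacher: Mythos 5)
The paper does not actually prove Proposition~\ref{prop-amb-boundwpq}; it is imported wholesale from \cite{CN_sharpint}, with the phrase ``see \cite{CN_sharpint} for additional details'' in place of an argument. So there is no in-paper proof to match your proposal against, only the external reference. That said, your outline does follow the route of \cite{CN_sharpint}: the reduction from $A(f,g)$ to $W(f,g)$ via the identity $A(f,g)(z)=2^{-d}\cFs W(f,g)(z/2)$, the fact that $\cFs$ intertwines symplectic modulation and amalgam norms (\eqref{eq-modwiesympfou}) together with boundedness of the fixed dilation $z\mapsto z/2$, and then the factorization identity for the short-time Fourier transform of a Wigner distribution against a Wigner-type window. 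Incidentally you have silently (and correctly) repaired a typo in the stated hypotheses — the second entry of the $\min$ should be $1/q_0+1/q_1$, not a repeated $1/p_0+1/p_1$.

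The genuine gap is that you stop precisely at the step you yourself flag as ``the main obstacle.'' After applying the factorization $|V_\Phi W(f,g)(z,\zeta)| = |V_{\varphi}f(u)|\,|V_{\psi}g(v)|$ with $u = z + J\zeta/2$, $v = z - J\zeta/2$, one has to compute the $L^{p_2}_z(L^{q_2}_\zeta)$ norm of a product whose two factors each depend on a $45^\circ$-rotated pair of coordinates, and whose target mixed norms on the right side are taken in yet another coordinate splitting ($L^{p_0}$ in position, $L^{q_0}$ in frequency for $f$, similarly for $g$). A single pass of Minkowski followed by H\"older in the rotated frame does not, by itself, reproduce the two-sided sharp condition $q_2\ge\max\{p_0,q_0,p_1,q_1\}$ and $1/p_2+1/q_2 \le \min\{1/p_0+1/p_1,\,1/q_0+1/q_1\}$: different orderings of the integrations, or different groupings of the rotated variables, yield strictly smaller index regions, and one needs either a careful case analysis (for instance, first estimating the inner $L^{p_2}_z$ by Young/H\"older in the rotated variables, then the outer $L^{q_2}_\zeta$ by a nested Minkowski argument, possibly splitting the position and momentum components of $z$ and $\zeta$ separately since each pair $(z_j,\zeta_{j'})$ rotates into $(u,v)$ coordinates independently) or an interpolation between explicitly verifiable endpoint cases. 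Since this is exactly the computation that produces the stated index constraints, omitting it leaves the proposal as a plan rather than a proof. Everything up to and including the factorization identity is sound; what is missing is the mixed-norm bookkeeping that actually delivers the sharp conditions, and you would need to write that out before the argument can be considered complete.
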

	
	\subsection{Weyl product and twisted convolution}\label{sec-weyl-twist}
	The Weyl pseudo-differential operator $a^\w\colon \cS(\rd) \to \cS'(\rd)$ with symbol $a \in \cS'(\rdd)$ is defined by the rule
	\begin{equation}
		\lan a^\w f,g \ran = \lan a, W(g,f) \ran, \qquad f,g, \in \cS(\rd). 
	\end{equation}
	At a formal level, we thus have the explicit representation
	\begin{equation}
		a^\w f(x)= (2\pi)^{-d} \irdd e^{i\xi \cdot (x-y)} a\Big(\frac{x+y}{2},\xi \Big)f(y) \dd{y}\dd{\xi}. 
	\end{equation}
	
	Continuity results for Weyl operators depend on the target spaces as well as on the symbols. Consider for instance the space $S^{m}(\rdd)$, $m \in \bR$, of smooth functions $a \colon \rdd \to \bC$ such that for every $\a \in \bN^{2d}$ there exists $C_\a>0$ such that
	\begin{equation}
		|\partial^\a a(z)| \le C_\a  \lan z \ran^m, \quad z \in \rdd. 
	\end{equation} Note that there is no gain in decay upon differentiation for symbols in $S^m$. On the other hand, the condition
	\begin{equation}
		|\partial^\a a(z)| \le C_\a \lan z \ran^{m-|\a|}, \quad z \in \rdd,
	\end{equation} characterizes the Shubin symbol class $\Gamma^m(\rdd)$. The classes $\Gamma^m$ and $S^m$ become Fréchet spaces when the natural seminorms associated with the corresponding definitions are taken into account, and clearly $\Gamma^m \subset S^m$. The time-frequency analysis of smooth symbol classes performed in \cite{bastianoni,toft} (see also \cite[Theorem 2.2]{bhimani_21}) yields the following result. 
	\begin{proposition}\label{prop-weyl-sm}
		Consider $m,s \in \bR$ and $1 \le p,q \le \infty$. The Weyl operator $a^\w \colon \cS(\rd) \to \cS'(\rd)$ with symbol $a \in S^m(\rdd)$ extends to a bounded operator $M^{p,q}_{s+m}(\rd) \to M^{p,q}_s(\rd)$, the operator norm depending only on a finite number of seminorms of $a$ in $S^m$. 
	\end{proposition}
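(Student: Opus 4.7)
My plan is to control the short-time Fourier transform of $a^\w f$ pointwise by an integral operator acting on $V_\phi f$ with a kernel built from the Gabor transform of the symbol $a$, and then to close the argument via Young's inequality in the weighted mixed-norm setting.

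First, I would fix Schwartz windows $\phi, \psi \in \cS(\rd)\smo$ and set $\Phi \coloneqq W(\psi, \phi) \in \cS(\rdd)$. Starting from the defining identity $\lan a^\w f, M_\xi T_x \psi \ran = \lan a, W(M_\xi T_x \psi, f) \ran$ and inserting the Gabor reproducing formula for $a$ with window $\Phi$, I would derive the pointwise estimate
\begin{equation}
	|V_\psi(a^\w f)(x,\xi)| \lesssim \int_{\rdd \times \rdd} |V_\Phi a(y,\eta)| \, |V_\phi f( \Psi(x,\xi,y,\eta) )| \, \dd{y}\, \dd{\eta},
\end{equation}
where $\Psi$ is an affine map whose linear part is an isomorphism in the ``$(x,\xi)$-slot''. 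This reduces the continuity of $a^\w$ to an essentially convolutive estimate with kernel $V_\Phi a$.

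Second, I would extract from the $S^m$ hypothesis a bound of the form
\begin{equation}
	|V_\Phi a(y,\eta)| \lesssim_N \lan y \ran^m \lan \eta \ran^{-N}, \qquad (y,\eta) \in \rdd \times \rdd,
\end{equation}
valid for every $N \in \bN$, with implicit constants controlled by a finite number of seminorms of $a$ in $S^m(\rdd)$. This follows by repeated integration by parts in the Gabor integral defining $V_\Phi a$: each integration shifts a derivative onto $a$ while gaining a factor $\lan \eta \ran^{-1}$, and the derivatives $\partial^\alpha a$ still satisfy the growth bound $\lan \cdot \ran^m$ by assumption. Third, multiplying the pointwise bound by $v_s(x,\xi)$ and using Peetre's inequality to redistribute the weight between $V_\Phi a$ and $V_\phi f$ along the fibres of $\Psi$, I would obtain
\begin{equation}
	v_s(x,\xi) |V_\psi(a^\w f)(x,\xi)| \lesssim \int_{\rdd \times \rdd} K(y,\eta) \, v_{s+m}(\Psi(x,\xi,y,\eta)) \, |V_\phi f( \Psi(x,\xi,y,\eta) )| \, \dd{y} \, \dd{\eta},
\end{equation}
with $K \in L^1(\rdd \times \rdd)$ upon choosing $N$ large enough. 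After an affine change of variables the right-hand side becomes a genuine convolution in the $(x,\xi)$-variables, to which Young's inequality on the mixed-norm space $L^{p,q}(\rdd)$ applies, yielding the desired bound $\|a^\w f\|_{M^{p,q}_s} \lesssim \|f\|_{M^{p,q}_{s+m}}$ linearly in a $S^m$-seminorm of $a$.

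The hard part will be the bookkeeping of the weights through the affine maps entering the pointwise estimate: the factor $\lan y \ran^m$ stemming from the symbol must be absorbed into the input weight $v_{s+m}$, and this requires a careful use of Peetre's inequality whose precise form depends on the signs of $s$ and $s+m$ and on the exact structure of $\Psi$. Once this is settled, the Young-type convolution argument is routine and uniform in the indices $p,q \in [1,\infty]$, thus providing the claimed operator-norm bound with dependence only on a finite number of seminorms of $a$ in $S^m$.
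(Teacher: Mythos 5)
Your proposal is correct and follows essentially the same Gabor--matrix route as the references \cite{bastianoni,toft} that the paper cites for this proposition without supplying its own proof: the pointwise Gabor-matrix estimate for $a^\w$, the almost-diagonal kernel bound $|V_\Phi a(y,\eta)|\lesssim_N \lan y\ran^m\lan \eta\ran^{-N}$ obtained by integration by parts and Peetre's inequality, and then a weighted Young's inequality in $L^{p,q}(\rdd)$. The bookkeeping you flag as the hard part is exactly the step where the factor $\lan y\ran^m$ is traded for the weight shift $v_{s+m}\to v_s$, and it closes precisely because the off-diagonal decay in $\eta$ can be taken of arbitrarily high order, absorbing the residual $\lan\cdot\ran^{|m|+|s|}$ growth into an $L^1$ kernel.
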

	
	The composition of Weyl operators associates with a bilinear form on symbols, usually known as the Weyl (or twisted) product: we have $a^\w b^\w = (a\#b)^\w$, where (formally)
	\begin{equation}
		a\#b(z)=(4\pi)^{-2d} \int_{\bR^{4d}} e^{\frac{i}{2} \sigma(u,v)} a(z+u/2)b(z-v/2) \dd{u} \dd{v}.
	\end{equation}
	Alternatively, the Weyl product can be recast in terms of twisted convolution. The twisted convolution of $a,b \in \cS(\rdd)$ is defined by 
	\begin{equation}\label{eq-def-twistconv}
		a \times b(z) \coloneqq  \pi^{-d} \irdd e^{2i\sigma(z,w)} a(z-w)b(w) \dd{w} = \pi^{-d} \irdd e^{-2i\sigma(z,w)} a(w)b(z-w) \dd{w}. 
	\end{equation}
	Note that $a\times b (z) = \cFs(a (T_z b)^\vee)(z)$. The twisted convolution extends to a continuous (non-commutative) multiplication on $L^p(\rdd)$ for $1\le p \le 2$, as well as $\cS'(\rdd)\times \cS(\rdd) \to \cS'(\rdd)$. 
	
	As anticipated, twisted convolutions are intimately related to products of Weyl symbols: it is easy to prove that
	\begin{equation}\label{eq-def-twp}
		a \# b = a_\sigma \times b, \qquad a,b \in \cS(\rdd). 
	\end{equation}
	In particular, since the twisted convolution satisfies 
	\begin{equation} \cFs(a \times b) = a \times b_\sigma = a_\sigma \times b^\vee, \end{equation} the symplectic Fourier transform turns the twisted product into the twisted convolution:
	\begin{equation}\label{eq-symfou-intertw}
		\cFs(a \# b) = a_\sigma \times b_\sigma = a \# b_\sigma = a \times b^\vee = a_\sigma \# b^\vee. 
	\end{equation}
	
	The Weyl product extends to a continuous map on modulation spaces, in accordance with the following result --- see \cite{CTW} for further details. 
	
	\begin{proposition}\label{prop-cwt}
		Consider $1\le p_0,q_0,p_1,q_1,p_2,q_2 \le \infty$. The Weyl product $\# \colon \cS(\rdd) \times \cS(\rdd) \to \cS(\rdd)$ extends to a continuous operator $\M0(\rdd) \times \M1(\rdd) \to \M2(\rdd)$ if and only if	
		\begin{equation}\label{eq-condsharpcont}
			\max \Big\{1+\frac{1}{q_2}-\frac{1}{q_0}-\frac{1}{q_1},0\Big\} \le \min \Big\{\frac1{p_0},\frac1{p_1},\frac1{p_2'},\frac1{q_0'}, \frac1{q_1'}, \frac1{q_2}, \frac{1}{p_0}+\frac{1}{p_1}-\frac{1}{p_2} \Big\}. \end{equation}
	\end{proposition}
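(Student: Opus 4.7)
My plan rests on three ingredients: the intertwining $a\#b = a_\sigma \times b$ recorded in \eqref{eq-def-twp}, a phase-space identity for the symplectic STFT of a twisted convolution, and sharp Young-type estimates for the twisted convolution itself, combined via complex interpolation.

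For \emph{sufficiency}, I would first use \eqref{eq-def-twp} and the identification $\cFs M^{p,q}(\rdd) = W^{p,q}(\rdd)$ from \eqref{eq-modwiesympfou} to recast the Weyl-product estimate as a bilinear bound on the twisted convolution (trading one modulation factor for a Wiener amalgam factor). The crucial computation is then an explicit formula for $\cV_G(a \times b)$ with a tensor-product window $G = G_0 \otimes G_1$, which after a direct calculation takes the shape
\begin{equation}
\cV_G(a \times b)(z,\zeta) = c_d\, e^{i\Phi(z,\zeta)}\,\big[\cV_{G_0}a(\cdot,\zeta) \times \cV_{G_1}b(\cdot,\zeta)\big](z)
\end{equation}
up to a unimodular factor, i.e.\ a twisted convolution in the position variable with the frequency $\zeta$ acting only as a parameter. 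This is the phase-space analogue of the classical identity $V_G(f*g)=V_{G_0}f * V_{G_1}g$. Taking the appropriate mixed norm in $(z,\zeta)$ and applying the sharp $L^p$-Young inequality for the twisted convolution in $z$ — which combines the Hausdorff--Young-type range $1\le p,q \le 2$, $1/p+1/q \ge 1+1/r$, with the dual endpoint $\|a\times b\|_{L^\infty}\lesssim \|a\|_{L^p}\|b\|_{L^{p'}}$ — followed by ordinary Young's inequality in $\zeta$, yields the claimed bilinear estimate at the vertices of the polytope described by \eqref{eq-condsharpcont}. The full range is then recovered by a bilinear version of the complex interpolation result recalled in Proposition \ref{prop-modsp-interpol}.

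For \emph{necessity}, the strategy is to test the inequality on families for which both Weyl products and modulation-space norms admit closed-form computations. Dilated Gaussians $e^{-\lambda|z|^2}$ and their time-frequency shifts are the natural choice: the Weyl product of two Gaussians is again a Gaussian (up to an explicit normalization), and its $\M2$-norm can be computed in closed form. Letting $\lambda\to 0$ or $\lambda\to\infty$ extracts the constraints on the $p$-indices, including the Hausdorff--Young-type piece $\tfrac{1}{p_0}+\tfrac{1}{p_1}-\tfrac{1}{p_2}$. To capture the $q$-constraints, I would instead test against finite sums $\sum_{k=1}^{N}\pi(z_k,\zeta_k)\phi$ of well-separated time-frequency shifts of a fixed Gaussian atom and track the growth of both sides as $N\to\infty$; this extracts in particular the nontrivial lower-bound term $1+1/q_2-1/q_0-1/q_1$ on the left-hand side of \eqref{eq-condsharpcont}.

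The \emph{main obstacle} is, on the sufficiency side, identifying a minimal collection of vertex estimates whose interpolation hull coincides exactly with the polytope carved out by \eqref{eq-condsharpcont}: that polytope is high-dimensional with many faces, and matching sufficiency and necessity at every face requires carefully chosen one-parameter test families on the lower-bound side and carefully chosen endpoint inequalities on the upper-bound side. This is precisely the delicate bookkeeping carried out in \cite{CTW} and \cite{holst}, whose blueprint I would follow.
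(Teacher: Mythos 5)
First, a bookkeeping point: the paper does not prove Proposition \ref{prop-cwt} at all---it is quoted verbatim from \cite{CTW}, with the reader referred there for the proof. So there is no ``paper's proof'' to compare against; I can only assess the internal soundness of your sketch.

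The central technical claim of your sufficiency argument is the identity
\begin{equation}
\cV_G(a \times b)(z,\zeta) \;=\; c_d\, e^{i\Phi(z,\zeta)}\,\bigl[\cV_{G_0}a(\cdot,\zeta) \times \cV_{G_1}b(\cdot,\zeta)\bigr](z),
\end{equation}
with the frequency variable $\zeta$ entering only as a parameter, which you describe as ``the phase-space analogue of the classical identity $V_G(f*g)=V_{G_0}f*V_{G_1}g$.'' This identity is not correct, and the analogy with the classical convolution case does not hold. The classical identity is obtained precisely because the Fourier transform carries convolution to pointwise product: one rewrites the STFT in terms of $\hat f_1 \hat f_2$, and the convolution in the first variable falls out with $\xi$ frozen. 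For the twisted convolution this mechanism is unavailable: by \eqref{eq-symfou-intertw}, $\cFs(a\times b)=a\times b_\sigma = a_\sigma \times b^\vee$, i.e.\ the symplectic Fourier transform sends a twisted convolution to a Weyl product (equivalently, another twisted convolution), \emph{never} to a pointwise product. There is no way to decouple $\zeta$ as a spectator.

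This is not merely a cosmetic problem; it is detectable in the conclusion. If your identity held, running twisted Young in $z$ for each fixed $\zeta$ and then H\"older in $\zeta$ would produce index constraints of the shape $\tfrac1{p_0}+\tfrac1{p_1}\ge 1+\tfrac1{p_2}$ (Young in the position slot) and $\tfrac1{q_2}\le\tfrac1{q_0}+\tfrac1{q_1}$ (H\"older in the frequency slot). These are incompatible with \eqref{eq-condsharpcont}: the sharp $p$-constraint is only $\tfrac1{p_0}+\tfrac1{p_1}\ge\tfrac1{p_2}$ (strictly more permissive than Young), while the sharp $q$-constraints include \emph{both} $q_2\ge q_0,q_1$ \emph{and} $\tfrac1{q_0}+\tfrac1{q_1}\ge 1+\tfrac1{q_2}$, which together are strictly more restrictive than H\"older. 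In other words, your scheme would simultaneously fail to cover part of the true range in $p$ and ``prove'' boundedness in cases ruled out by the $q$-constraints---so the gap is fatal, not a matter of losing endpoints.

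The correct STFT identity underlying the proofs in \cite{CTW,holst} is a genuine $4d$-dimensional integral in which the position and frequency variables of the three STFTs are coupled (a twisted convolution on the \emph{full} phase space $\rdq$, not on a $\rdd$-slice). That coupling is exactly what produces the hybrid H\"older/Young behaviour visible in \eqref{eq-condsharpcont}. Your necessity strategy (Gaussians for $p$-constraints, superpositions of well-separated time-frequency shifts for $q$-constraints) is reasonable in spirit and close to what is done in \cite{CTW,holst}, but without the correct sufficiency mechanism the sketch as written does not establish the proposition.
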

	The condition in \eqref{eq-condsharpcont} is equivalent to \begin{equation}\label{}
		\frac1{p_2} \le \frac1{p_1} + \frac1{p_0}, \qquad q_2 \ge q_0,q_1, \qquad 1 \le \frac1{q_1} + \frac1{q_0},
	\end{equation}
	\begin{equation}\label{}
		\max\Big\{ 1-\frac1{p_1} +\frac1{q_2}, 1-\frac1{p_0}+\frac1{q_2},\frac1{p_2} + \frac1{q_2},1+\frac1{q_2} \Big\} \le \frac1{q_1} + \frac1{q_0}.
	\end{equation}
	In particular, let us stress that boundedness of the Weyl product does not hold if $q_2<q_0$ or $q_2<q_1$. A straightforward proof of this fact is given in \cite[Proposition 3.3]{holst}. Actually, inspecting the argument given there reveals that a stronger conclusion can be drawn, as detailed below --- we retrace here the steps of the proof for the sake of clarity. 
	
	\begin{proposition}\label{prop-sharpiff}
		Given $1 \le p_1,q_1,p_2,q_2 \le \infty$ such that $q_2 < q_1$ and $b \in M^1(\rdd)\smo$, the mapping $\#_b \colon M^1(\rdd) \to M^1(\rdd)$ defined by $\#_b(a) = a\#b$ does not extend to a continuous map $\M1(\rdd) \to \M2(\rdd)$. The same result holds as well for the map $a \mapsto b \#a$. 
	\end{proposition}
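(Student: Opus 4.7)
I would proceed by contradiction, constructing for each fixed $b \in M^1(\rdd)\smo$ a sequence of Schwartz test functions whose image under $\#_b$ has $\M2$-norm growing strictly faster than their $\M1$-norm. First, the statement for $a \mapsto b\#a$ reduces to that for $\#_b$: from the involution identity $\overline{a\#b} = \bar b \# \bar a$ (an immediate consequence of $(a^\w b^\w)^* = \bar b^\w \bar a^\w$) together with the invariance of unweighted modulation spaces under complex conjugation, boundedness of $a \mapsto b \# a$ from $\M1$ to $\M2$ is equivalent to boundedness of $\#_{\bar b}$, and $\bar b \in M^1\smo$ whenever $b$ is.

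For the test family, fix a Schwartz Gaussian $\phi \in \cS(\rdd)$ and a sequence of pairs $\{(y_k,\zeta_k)\}_{k=1}^\infty \subset \rdd \times \rdd$ that are well-separated in $\bR^{4d}$, then set
\begin{equation}
a_N := \sum_{k=1}^N \pi(y_k,\zeta_k)\phi \in \cS(\rdd).
\end{equation}
Since $|\cV_g(\pi(y_k,\zeta_k)\phi)(z,\zeta)| = |\cV_g\phi(z-y_k,\zeta-\zeta_k)|$ and $\cV_g\phi$ is Gaussian-localized, for sufficiently well-separated shifts the summands have essentially disjoint Gabor support, and standard disjointness estimates yield the upper bound
\begin{equation}
\|a_N\|_{\M1} \asymp N^{1/q_1}\,\|\phi\|_{\M1}.
\end{equation}
On the product side, the identity $a\#b = a_\sigma \times b$ from \eqref{eq-def-twp}, combined with the intertwining of translations and modulations under $\cFs$, rewrites each summand $\pi(y_k,\zeta_k)\phi \# b$ as a phase-modulated translate of a twisted convolution of $\phi_\sigma$ with a time-frequency-shifted copy of $b$. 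For pairs $(y_k,\zeta_k)$ chosen along a direction adapted to the symplectic covariance of $\#$ (so that translation and modulation shifts cooperate), the individual terms occupy disjoint regions in Gabor phase space also in the $\zeta$ variable, which produces the matching lower bound $\|a_N \# b\|_{\M2} \gtrsim c(b)\,N^{1/q_2}$ with $c(b)>0$ depending only on $b$ and $\phi$.

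The hard part will be the uniform lower bound on the norms of the individual summands: the translation-modulation formula for the twisted convolution produces factors of the form $\phi_\sigma \times \pi(\tilde z_k) b$ whose $\M2$-norm can decay as $|\tilde z_k| \to \infty$ by a Riemann--Lebesgue type argument, so the parameters $(y_k,\zeta_k)$ must be chosen precisely so that this decay is either avoided or compensated by the multiplicity of disjoint supports along the ``diagonal'' relation imposed by $\cFs$. Once this delicate point is settled, combining the two estimates gives
\begin{equation}
\frac{\|a_N\#b\|_{\M2}}{\|a_N\|_{\M1}} \gtrsim N^{1/q_2 - 1/q_1} \longrightarrow \infty \quad \text{as } N\to\infty,
\end{equation}
since $q_2 < q_1$, contradicting the existence of any continuous extension $\M1(\rdd) \to \M2(\rdd)$ of $\#_b$.
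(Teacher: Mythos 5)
Your approach is fundamentally different from the paper's, and unfortunately it contains a genuine gap that you yourself flag as ``the hard part'' without resolving it.

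The crux is the uniform lower bound $\|\pi(y_k,\zeta_k)\phi\#b\|_{\M2}\ge c(b)>0$ independent of $k$. By $a\#b=a_\sigma\times b$ and the covariance identity $T_{z_0}a\times b = e^{-2i\sigma(\cdot,z_0)}\bigl(a\times T_{z_0}b\bigr)$, shifting the factor $a$ in phase space gets transferred, after removing a pure phase, into a twisted convolution $\phi_\sigma\times T_{z_0}b$. For generic $b\in M^1(\rdd)$ --- say a Gaussian --- the supports of $\phi_\sigma$ and $T_{z_0}b$ drift apart and the twisted convolution decays as $|z_0|\to\infty$; a Riemann--Lebesgue effect also applies in the modulation direction. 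So there is no single parameter direction along which the contributions are both disjoint in phase space (needed for the $N^{1/q_2}$ lower bound) and of uniformly positive size. Choosing the shifts along the diagonal of the singular space of $\cL$ or along other ``symplectically cooperating'' directions does not obviously rescue the argument, and you give no construction that does. In addition, even the stated asymptotic $\|a_N\|_{\M1}\asymp N^{1/q_1}$ requires the shifts to be pure modulations ($y_k=0$), since arbitrary well-separated phase-space shifts would instead produce an exponent between $1/p_1$ and $1/q_1$; this conflicts with the freedom you need to make the output-side argument work. As written, the proof is incomplete.

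For comparison, the paper avoids any sequence construction. It picks $h\in M^1(\rd)$ with $\langle b^\w h,h\rangle\ne 0$, any $f\in M^{1,q_1}(\rd)\setminus\M2(\rd)$, and takes the single symbol $a=W(f,h)\in M^{1,q_1}(\rdd)$ (Proposition~\ref{prop-amb-boundwpq}). The rank-one structure $a^\w g=(2\pi)^{-d/2}\langle g,h\rangle f$ then gives $(a\#b)^\w h=(2\pi)^{-d/2}\langle b,W(h)\rangle f\notin\M2(\rd)$, while the assumed boundedness of $\#_b$ would force $(a\#b)^\w$ to map $M^{p_2'}(\rd)\to M^{q_2}(\rd)\subseteq\M2(\rd)$, a contradiction. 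This one-shot argument sidesteps entirely the delicate question of how twisted convolution interacts with time-frequency shifts of the factor. Your reduction of $a\mapsto b\#a$ to $\#_{\bar b}$ via $\overline{a\#b}=\bar b\#\bar a$ and conjugation-invariance of unweighted modulation spaces is a nice observation and correct; it is not needed in the paper's route (the same rank-one trick works symmetrically), but it is a clean way to halve the work once the forward direction is settled.
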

	\begin{proof}
		Recall that modulation spaces are increasing with the indices, hence there is no loss of generality if one considers the case where $p_1=1$ and $q_2 \le p_2$. We argue by contradiction, assuming that the map $\#_b$ is continuous $\M1(\rdd) \to \M2(\rdd)$. Let $h \in M^1(\rd)$ be such that $\lan b^\w h,h \ran = \lan b, W(h) \ran \ne 0$. Fix a non-trivial $f \in M^{1,q_1}(\rd)\setminus \M2(\rd)$ and set $a=W(f,h)$. By Proposition \ref{prop-amb-boundwpq} we have that $a \in M^{1,q_1}(\rdd)$, hence the assumption on the continuity of $\#_b$ implies that $a\#b \in \M2(\rdd)$. In particular, by \cite[Theorem 4.4.15]{CR_book} we have that $(a\#b)^\w$ is bounded $M^{p_2'}(\rd) \to M^{q_2}(\rd)$. Given the structure of the symbol $a$, explicit computations show that the action of $a^\w$ is given by
		\begin{equation}
			a^\w g = (W(f,h))^\w g = (2\pi)^{-d/2} \lan g,h \ran f, \quad g \in M^1(\rd). 
		\end{equation} In particular, we have
		\begin{equation}
			(a\#b)^\w h = a^\w (b^\w h) = (2\pi)^{-d/2} \lan b, W(h) \ran f \in M^{1,q_1}(\rd) \setminus \M2(\rd).
		\end{equation}
		Since $M^{q_2}(\rd) \subseteq \M2(\rd)$, this shows that $(a\#b)^\w$ is not continuous $M^1(\rd) \to M^{q_2}(\rd)$, therefore not even $M^{p_2'}(\rd) \to M^{q_2}(\rd)$, which is a contradiction. 
	\end{proof}
	
	\begin{remark}\label{rem-modwp} 
		In view of the relationships  \eqref{eq-modwiesympfou} and \eqref{eq-symfou-intertw}, Proposition \ref{prop-cwt} reflects into a characterization (under the same assumptions) of the boundedness of twisted convolutions on Wiener amalgam spaces. Furthermore, again as a consequence of \eqref{eq-symfou-intertw}, we have 
		\[ \|a \times b \|_{\M2} = \| a_\sigma \# b \|_{\M2} = \| a \# b_\sigma^\vee \|_{\M2}, \] hence the same characterization applies to the boundedness of the twisted convolution as a map $\W1 \times \M0 \to \M2$  as well as $\M1 \times \W0 \to \M2$. 
		
		\noindent In particular, Proposition \ref{prop-sharpiff} implies that if $b \in W^1(\rd)=M^1(\rd)$ is a non-trivial given function, the maps $a \mapsto a \times b$ and $a \mapsto b \times a$ fail to be continuous $\M1 (\rdd) \to \M2(\rdd)$ in the case where $q_2 < q_1$.
	\end{remark}
	
	\subsection{Review of Hermite and Laguerre functions}\label{sec-hermite-laguerre}
	
	Consider the Hermite operator $\cH=-\Delta + |x|^2$ on $\rd$. It is well known that this is a positive definite self-adjoint operator on $L^2(\rd)$ with domain $D(\cH)=\{f \in L^2(\rd) : \cH f \in L^2(\rd)\}$. The corresponding spectrum $\sigma(\cH)$ is discrete, precisely
	\begin{equation}
		\sigma(\cH) = d+2\bN = \{ d+2k : k = 0,1,2, \ldots \}. 
	\end{equation} 
	In order to describe the structure of the eigenspaces, we introduce the Hermite functions. Recall that the Hermite polynomial of degree $n$ on $\bR$ is defined by
	\begin{equation}
		H_n(y) \coloneqq (-1)^n e^{y^2}\frac{\dd^n}{\dd{y}^n}(e^{-y^2}).
	\end{equation} The normalized Hermite functions are then defined by
	\begin{equation}
		h_n(y) \coloneqq (2^n n! \sqrt{\pi})^{-1/2} e^{-y^2/2} H_n(y). 
	\end{equation} 
	These are bounded functions, uniformly in $n$, and form an orthonormal basis of $L^2(\bR)$. 
	
	The Hermite functions on $\rd$ are obtained by taking elementary tensors: for a multi-index $\alpha \in \bN^d$, we set 
	\begin{equation}
		\Phi_\alpha(x) \coloneqq h_{\alpha_1}(x_1)\cdots h_{\alpha_d}(x_d) = \frac{(-1)^{|\alpha|}}{(2^{|\alpha|} \alpha! \pi^{d/2})^{1/2}} e^{|x|^2/2} \partial_x^\alpha e^{-|x|^2}, \qquad x \in \rd,
	\end{equation} where we set $|\a| \coloneqq \a_1 + \ldots + \a_d$. It is clear that $\{ \Phi_\alpha : \alpha \in \bN^d \}$ is an orthonormal basis of $L^2(\rd)$ (as well as an unconditional Schauder basis of $\cS(\rd)$), actually consisting of eigenfunctions of the harmonic oscillator:
	\begin{equation}
		\cH \Phi_\alpha = (d+2|\alpha|)\Phi_\alpha. 
	\end{equation}
	Therefore, the eigenspace associated with the eigenvalue $d+2k$ is spanned by $\{\Phi_\alpha : |\alpha|=k\}$, hence it is finite dimensional. In particular, the orthogonal projection $P_k = \sum_{|\a|=k} \lan \cdot,\Phi_\a\ran \Phi_\a$ is regularizing --- that is, $P_k \colon \cS'(\rd) \to \cS(\rd)$ is continuous. 
	
	Recall that the spectral multiplier $m(\cH)$ associated with a Borel measurable function $m \colon \sigma(\cH) \to \bC$ is defined by
	\begin{equation}
		m(\cH)f \coloneqq \sum_{k \in \bN} m(d+2k)P_k f, \qquad f \in D(m(\cH)),
	\end{equation} with maximal domain $D(m(\cH))= \Big\{f \in L^2(\rd) : \sum_{k \in \bN} |m(d+2k)|^2 \|P_k f\|_{L^2}^2 < \infty \Big\}$. 
	
	Let us consider now the special Hermite operator on $\rdd$, also known as the twisted Laplacian. Recall from the Introduction that it is defined as the sum of the Hermite operator with a rotation operator --- to be precise, setting $z=(x,y) \in \rdd$, we have 
	\begin{equation}
		\cL \coloneqq - \sum_{j=1}^d \Big[\big(\partial_{x_j}-\frac{i}{2} y_j \big)^2+\big(\partial_{y_j}+\frac{i}{2}x_j\big)^2 \Big] = -\Delta_z + \frac{1}{4}|z|^2 -i\sum_{j=1}^d  \big(x_j\partial_{y_j} - \partial_{x_j}y_j\big).
	\end{equation}
	
	As far as the spectral features of $\cL$ are concerned, the theory of Landau-Weyl operators \cite{degoss_09,degoss_10} implies that some of the properties of $\cH$ extend to $\cL$. In particular, $\cH$ and $\cL$ have the same eigenvalues (hence $\sigma(\cL)= d+2\bN$), but these are now infinitely degenerate. More precisely, each eigenfunction $h$ of $\cH$ associates with a family of eigenfunctions of $\cL$ of the form $\{ U_{g}h : g \in \cS(\rd), \|g\|_{L^2}=1 \}$, where $U_g$ are the partial isometries $L^2(\rd) \to L^2(\rdd)$ given by ambiguity transforms indexed by normalized Schwartz functions:
	\begin{equation}
		U_g h (x,\xi) \coloneqq  A(h,g)(x,\xi).  
	\end{equation} In fact, these results follow from the more general intertwining relation $\cL U_g = U_g \cH$. 
	
	The previous results naturally lead us to introduce the so-called special Hermite functions, indexed by $\alpha,\beta \in \bN^d$:
	\begin{equation}
		\Phi_{\a,\b}(z)\coloneqq A(\Phi_\a,\Phi_\b)(Jz) = \cF W(\Phi_\a,\Phi_b)(-z), \quad z \in \rdd.
	\end{equation} 
	Exploiting the connection between the ambiguity and the Gabor transforms, as well as the identity \eqref{eq-stft-fou}, it is not difficult to show that $A(\Phi_a,\Phi_b)(Jz) = A(\widehat{\Phi_\a},\widehat{\Phi_\b})(z)$. It is then useful to recall that the Hermite functions are eigenfunctions for the Fourier transform, namely $\widehat{\Phi_\a} = (-i)^{|\a|}\Phi_\a$, hence
	\begin{equation}
		|\Phi_{\a,\b}(z)| = |A(\Phi_\a,\Phi_\b)(z)|, \quad z \in \rdd.
	\end{equation}
	The family $\{ \Phi_{\a,\b} : \a,\b \in \bN^d \}$ is clearly an orthonormal basis of $L^2(\rdd)$ consisting of eigenfunctions of $\cL$ --- as well as of the harmonic oscillator on $L^2(\rdd)$:
	\begin{equation}
		\cL\Phi_{\a,\b}= (d+2|\b|)\Phi_{\a,\b}, \qquad \cH\Phi_{\a,\b} = (d+|\a|+|\b|)\Phi_{\a,\b}.
	\end{equation}
	Moreover, the eigenspace of $\cL$ associated with the eigenvalue $d+2k$ is spanned by $\{ \Phi_{\a,\b} : |\b|=k\}$. The corresponding eigenprojection $Q_k$ can be given a remarkable form in light of some properties satisfied by the special Hermite functions. In particular, straightforward computations show that
	\begin{equation}
		\Phi_{\a,\b} \times \Phi_{\mu,\nu} =  4^d \Phi_{\a,\nu} \delta_{\b,\mu}. 
	\end{equation}
	Therefore, in light of the expansion $f = \sum_{\a,\b \in \bN^d} \lan f, \Phi_{\a,\b} \ran \Phi_{\a,\b \in \bN^d}$, we have
	\begin{equation}
		f \times \Phi_{\nu,\nu} = \sum_{\a,\b \in \bN^d} \lan f, \Phi_{\a,\b} \ran \Phi_{\a,\b} \times \Phi_{\nu,\nu} = 4^d \sum_{\a \in \bN^d} \lan f, \Phi_{\a,\nu} \ran \Phi_{\a,\nu}. 
	\end{equation}
	As a result, we obtain
	\begin{equation}
		Q_k f  = \sum_{|\b|=k} \sum_{\a \in \bN^d}  \lan f,\Phi_{\a,\b} \ran \Phi_{\a,\b} = f \times \Big(4^{-d} \sum_{|\b|=k} \Phi_{\b,\b} \Big).
	\end{equation}
	
	It is well known that the special Hermite functions can be expressed in terms of Laguerre functions. Recall that the Laguerre polynomials $L_n^\delta$ of type $\delta>-1$ and degree $n\in \bN$ are defined by
	\begin{equation}
		L_n^\delta(y) e^{-y} y^\delta = \frac{1}{n!} \frac{\dd^n}{\dd y^n} \big(e^{-y}y^{n+\delta}\big), \qquad y \ge 0. 
	\end{equation} They satisfy the following generating function identity: for all $r \in \bR$ with $|r|<1$, 
	\begin{equation}\label{eq-laguerre-genfun}
		\sum_{n=0}^{\infty} L_n^\delta(y)r^n = (1-r)^{-\delta-1} e^{-\frac{r}{1-r}y}. 
	\end{equation}
	Combining this identity with the Fourier transform and the definition of the special Hermite functions yields 
	\begin{equation}\label{eq-phibb-def}
		\Phi_{\b,\b}(x,\xi) = (2\pi)^{-d} \prod_{j=1}^{d} L_{\beta_j}^0\Big(\frac{|x_j|^2+|\xi_j|^2}{2}\Big) \exp\Big( -\frac{|x_j|^2+|\xi_j|^2}{4}\Big). 
	\end{equation}
	A straightforward comparison of the associated generating functions shows that
	\begin{equation}
		(2\pi)^d\sum_{|\b|=k} \Phi_{\b,\b}(z) =  L_k^{d-1}\Big(\frac{|z|^2}{2}\Big) e^{-\frac14 |z|^2} \eqqcolon \vp_k(z). 
	\end{equation} 
	As a consequence, the projection $Q_k$ has an explicit structure as twisted convolution:
	\begin{equation}\label{eq-Qk-def}
		Q_k f = (8\pi)^{-d} f \times \vp_k.
	\end{equation} 
	
	The spectral multiplier $m(\cL)$ associated with a Borel measurable function $m \colon \sigma(\cH) \to \bC$ is then defined by
	\begin{equation}
		m(\cL)f \coloneqq \sum_{k \in \bN} m(d+2k)Q_k f, \qquad f \in D(m(\cL)),
	\end{equation} with maximal domain $D(m(\cL))= \Big\{f \in L^2(\rd) : \sum_{k \in \bN} |m(d+2k)|^2 \|Q_k f\|_{L^2}^2 < \infty \Big\}$. 
	
	A refined time-frequency analysis of Hermite and special Hermite functions appears to be quite difficult. Nevertheless, the following (quite pessimistic) asymptotic bounds will suffice for the purposes of this note. 
	\begin{proposition}\label{prop-polybound} Let $\beta \in \bN^d$ be such that $|\b|=k$. There exist $N \in \bN$ such that, if $k \ge N$,
		\begin{equation}\label{eq-phib-mpq}
			\|\Phi_\b\|_{M^1} \asymp k^{d^2-d/2}, \quad  
		\end{equation}
		\begin{equation}\label{eq-phibb-wpq}
			\| \Phi_{\b,\b} \|_{M^1} \lesssim k^{2d^2-d},
		\end{equation}
		\begin{equation} \label{eq-vpk-wpq}
			\|\vp_k\|_{M^1} \lesssim k^{2d^2-1}.
		\end{equation}
	\end{proposition}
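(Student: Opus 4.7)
The three bounds form a cascade: once \eqref{eq-phib-mpq} is in hand, the remaining two follow by soft arguments (via the ambiguity-function representation and a counting argument), so the substantive work concentrates on the first estimate. My plan is to carry out Step 1 using a combination of the Shubin--Sobolev machinery already available in the paper and the tensor structure of the Hermite functions, and then to leverage the result in Steps 2 and 3.

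\emph{Step 1 (proof of \eqref{eq-phib-mpq}).} The starting point is that $\Phi_\beta$ is an eigenfunction of $\cH$ with eigenvalue $d+2k$. Two compatible routes yield polynomial bounds of the desired form. The first combines Proposition \ref{prop-weyl-sm} applied to the Hermite symbol $H \in \Gamma^2 \subset S^2$ with the embedding $M^2_s(\bR^d) \hookrightarrow M^1(\bR^d)$ for $s>d$ (proved by Cauchy--Schwarz on the STFT) and the Shubin--Sobolev characterization $\|\Phi_\beta\|_{M^2_s}\asymp\|\cH^{s/2}\Phi_\beta\|_{L^2}=(d+2k)^{s/2}$. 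The second exploits the tensor product structure $\Phi_\beta = h_{\beta_1}\otimes\cdots\otimes h_{\beta_d}$ together with the multiplicativity of the $M^1$ norm under tensor products (immediate from $V_{g_0}\Phi_\beta(x,\xi)=\prod_j V_{h_0}h_{\beta_j}(x_j,\xi_j)$ with the Gaussian window $g_0=\Phi_0$), reducing the task to the one-dimensional factor $\|h_n\|_{M^1(\bR)}$; for the latter one uses the explicit formula $|V_{h_0}h_n(z)|=c_n|z|^n e^{-|z|^2/4}$ and computes the resulting $L^1$ norm in polar coordinates via Stirling-type asymptotics of $\Gamma(n/2+1)/\sqrt{n!}$. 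An appropriate choice of weighted-modulation parameter and the pessimistic bound $\beta_j\le k$ in each factor lead to the stated upper-bound exponent; the matching lower bound comes from specialising the explicit computation to the worst-case multi-index.

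\emph{Step 2 (proof of \eqref{eq-phibb-wpq}).} Since $\Phi_{\beta,\beta}(z)=A(\Phi_\beta,\Phi_\beta)(Jz)$ and the linear symplectic change of variables $z\mapsto Jz$ preserves $M^1$ up to equivalent norms (being a metaplectic action), one has $\|\Phi_{\beta,\beta}\|_{M^1(\bR^{2d})}\asymp\|A(\Phi_\beta,\Phi_\beta)\|_{M^1(\bR^{2d})}$. Proposition \ref{prop-amb-boundwpq}, applied with all six indices equal to $1$ (all hypotheses are trivially met and $W^1=M^1$), then yields
\[
\|A(\Phi_\beta,\Phi_\beta)\|_{M^1(\bR^{2d})}\lesssim\|\Phi_\beta\|_{M^1(\bR^d)}^2,
\]
which combined with Step 1 gives \eqref{eq-phibb-wpq}.

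\emph{Step 3 (proof of \eqref{eq-vpk-wpq}).} From the comparison of the generating functions \eqref{eq-laguerre-genfun} and \eqref{eq-phibb-def} already performed above we have $\vp_k=(2\pi)^d\sum_{|\beta|=k}\Phi_{\beta,\beta}$. The triangle inequality, together with the standard cardinality estimate $|\{\beta\in\bN^d:|\beta|=k\}|=\binom{k+d-1}{d-1}\asymp k^{d-1}$ and the uniform bound from Step 2, yields
\[
\|\vp_k\|_{M^1}\le(2\pi)^d\sum_{|\beta|=k}\|\Phi_{\beta,\beta}\|_{M^1}\lesssim k^{d-1}\cdot k^{2d^2-d}=k^{2d^2-1},
\]
as claimed. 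The main obstacle of the whole argument lies in Step 1: while the existence of a polynomial bound on $\|\Phi_\beta\|_{M^1}$ is routine, matching the precise exponent $d^2-d/2$ requires either careful Stirling asymptotics in the explicit one-dimensional integral or an optimised choice of weighted Sobolev index in the pseudo-differential route; Steps 2 and 3, in contrast, are essentially formal consequences of the structural results already available in the paper.
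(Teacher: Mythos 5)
Your Steps 2 and 3 reproduce the paper's argument essentially verbatim: Proposition \ref{prop-amb-boundwpq} with all indices equal to $1$ gives \eqref{eq-phibb-wpq}, and the triangle inequality together with the stars-and-bars count (the paper writes $\binom{k+d+1}{k}$, you write $\binom{k+d-1}{d-1}$, both $\asymp k^{d-1}$) gives \eqref{eq-vpk-wpq}. The genuine divergence is in Step 1.

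The paper's Step 1 is short: it invokes the pointwise identity $|V_g f|=|A(f,g)|$ \emph{with the choice $g=f=\Phi_\beta$}, i.e.\ it takes the Hermite function itself as STFT window, so that $\|\Phi_\beta\|_{M^1}=\|A(\Phi_\beta,\Phi_\beta)\|_{L^1}=\|\Phi_{\beta,\beta}\|_{L^1}$, and then reads off the exponent from the sharp Laguerre $L^1$ asymptotics in Thangavelu's Lemma~1.5.4 applied to the product formula \eqref{eq-phibb-def}. Your two proposed routes both fix the window (either implicitly via the Shubin--Sobolev embedding, or explicitly via the Gaussian $g_0=\Phi_0$), and this is not a cosmetic difference. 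With a fixed Gaussian window, the explicit one-dimensional formula $|V_{h_0}h_n(z)|=c_n|z|^n e^{-|z|^2/4}$ together with the tensor structure gives $\|\Phi_\beta\|_{M^1}\lesssim\prod_j \beta_j^{1/4}\lesssim k^{d/4}$, while your pseudo-differential route gives $\|\Phi_\beta\|_{M^1}\lesssim k^{(d+\varepsilon)/2}$. For $d\ge 2$ both of these are strictly \emph{smaller} than the claimed $k^{d^2-d/2}$, so neither route can deliver the two-sided bound \eqref{eq-phib-mpq} that the proposition asserts, and your promise that ``an appropriate choice of weighted-modulation parameter \ldots\ leads to the stated upper-bound exponent'' cannot be made good for $d\ge 2$. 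Note also that the quantity $\|\Phi_\beta\|_{M^1}$ with a fixed window is a function of the full multi-index $\beta$, not of $k=|\beta|$ alone (e.g.\ $\beta=(k,0,\ldots,0)$ vs.\ balanced $\beta$ give different orders), so a genuine $\asymp k^{\text{exponent}}$ is unavailable by this route. The paper's $\beta$-dependent choice of window sidesteps this dependence, but at the cost that the change-of-window constants are uncontrolled in $k$; indeed $\langle \Phi_0,\Phi_\beta\rangle=0$ for $\beta\ne 0$ prevents the standard change-of-window lemma from even being applied with a Gaussian. In practice this discrepancy is harmless, since everything downstream (Theorem \ref{maint-heatfrac}, Section 4) uses only that $\|\vp_k\|_{M^1}$ grows at most polynomially in $k$, which all three arguments deliver. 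But as a proof of the stated Proposition \ref{prop-polybound} exactly as written, your Step 1 proves upper bounds with a different (smaller) exponent and provides no lower bound; you would need to either adopt the paper's $\beta$-dependent window computation (and accept the resulting looseness in the interpretation of $\|\cdot\|_{M^1}$) or restate \eqref{eq-phib-mpq} with your fixed-window exponent and only the upper-bound inequality.
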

	\begin{proof}
		Let us first consider \eqref{eq-phib-mpq}. Since the identity $|V_g f|=|A(f,g)|$ holds pointwise, choosing $f = g =\Phi_\b \in \cS(\rdd)$ we have
		\begin{equation}
			\|\Phi_\b\|_{M^1} = \|A(\Phi_\b,\Phi_\b)\|_{L^1} = \|\Phi_{\b,\b}\|_{L^1}. 
		\end{equation}
		In light of \eqref{eq-phibb-def}, after setting $z_j = (x_j,\xi_j) \in \rdd$ the problem boils down to prove bounds for
		\begin{equation}
			\|\Phi_\b\|_{M^1} = (2\pi)^{-d} \prod_{j=1}^{d} \Big\| L_{\beta_j}^0\Big(\frac{|z_j|^2}{2}\Big) \exp\Big( -\frac{|z_j|^2}{4}\Big) \Big\|_{L^1_{z_j}}.
		\end{equation}
		Sharp asymptotic $L^1$ bounds for Laguerre functions are known, see \cite[Lemma 1.5.4]{thangavelu_93}, and straightforward computations lead to the claim.
		
		Concerning \eqref{eq-phibb-wpq}, application of Proposition \ref{prop-amb-boundwpq} with $p_2=q_2=1$ yields
		\begin{equation}
			\| \Phi_{\b,\b} \|_{M^1} \lesssim \|\Phi_\b\|_{M^1}^2, \end{equation} and the claim follows immediately from the previous step. 
		
		To conclude, the bound in \eqref{eq-vpk-wpq} follows after noticing that 
		\begin{equation}
			\|\vp_k\|_{M^1} \le \sum_{|\b|=k} \|\Phi_{\b,\b}\|_{M^1}, 
		\end{equation} as a consequence of \eqref{eq-phibb-wpq} and a standard stars-and-bars argument showing that $\sum_{|\b|=k} 1 = \binom{k+d+1}{k}$ --- the latter becoming comparable to $k^{d-1}$ for $k$ sufficiently large. 
	\end{proof}
	
	\subsection{Metaplectic operators}\label{sec-metapl}
	The metaplectic representation is a faithful and strongly continuous unitary representation on $L^2(\rd)$ of the two-fold covering $\Mp$ of the symplectic group $\Sp$ --- see \cite{CR_book,degoss_11_book} for further details. In particular, every symplectic matrix $S \in \Sp$ associates with a pair of unitary operators that differ only by the sign --- we write $\mu(S)$ to denote any of them, with a slight abuse of notation. As a result, for every $S_1,S_2 \in \Sp$ we have that $\mu(S_1)\mu(S_2)$ and $\mu(S_1 S_2)$ coincide up to the sign. 
	
	The phase space analysis of metaplectic operators has been developed over the last decade. In particular, there is a fairly complete picture of boundedness on modulation spaces. We list below some results proved in \cite[Corollary 3.4]{CNT_dispdiag20}, \cite[Theorem 1]{cauli} and \cite[Theorems 3.2 and 4.6]{FS}. 
	
	\begin{proposition}\label{prop-metap-mod}
		Let $\mu(S)$ be a metaplectic operator associated with $S \in \Sp$. 
		\begin{enumerate}[label=(\roman*)]
			\item $\mu(S)$ is an automorphism of $\cS(\rd)$ and extends to a bounded operator on $M^p(\rd)$ for every $1 \le p \le \infty$. In particular, there exists $C>0$ such that, for all $f \in M^p(\rd)$, 
			\begin{equation}
				\| \mu(S)f \|_{M^p} \le C (\sigma_1(S) \cdots \sigma_d(S))^{|1/2-1/p|} \|f \|_{M^p}, 
			\end{equation} where $\sigma_1(S) \ge \ldots \ge \sigma_d(S) \ge 1$ are the $d$ largest singular values of $S$. Moreover, the following phase space dispersive estimate holds:
			\begin{equation}
				\| \mu(S)f \|_{M^\infty} \le C (\sigma_1(S) \cdots \sigma_d(S))^{-1/2} \|f\|_{M^1}, \quad f \in M^1(\rd). 
			\end{equation} 
			\item For $1 \le p,q \le \infty$ with $p\ne q$, $\mu(S)$ is bounded on $M^{p,q}(\rd)$ if and only if $S$ is an upper block triangular matrix. 
			\item For $s \in \bR$ and $1 \le p,q \le \infty$, $\mu(S)$ is bounded on $M^{p,q}_{s}(\rd)$ if and only if it is bounded on $M^{p,q}(\rd)$. 
		\end{enumerate} 
	\end{proposition}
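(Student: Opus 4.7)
The proposition collects several phase-space continuity results for metaplectic operators, so the plan is to outline the key ingredients underlying each item as they appear in the cited references \cite{CNT_dispdiag20, cauli, FS}.

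The backbone is the symplectic covariance of the metaplectic representation, namely
\begin{equation}
\mu(S)\pi(z)\mu(S)^{-1} = \omega_S(z)\pi(Sz), \qquad |\omega_S(z)| = 1.
\end{equation}
Inserting this into the definition of the Gabor transform yields
\begin{equation}
V_{\mu(S)g}(\mu(S)f)(z) = \overline{\omega_S(S^{-1}z)}\,V_g f(S^{-1}z),
\end{equation}
and since $|\det S| = 1$, the pure $M^p$ bounds in item (i) reduce to a change-of-window estimate: picking a Gaussian atom $g_0$, the window $\mu(S)g_0$ is again Gaussian with covariance determined by $S$, and the change-of-window inequality $\|V_{g_0}f\|_{L^p} \lesssim \|V_{g_0}(\mu(S)g_0)\|_{L^1}\|V_{\mu(S)g_0}f\|_{L^p}$ absorbs the singular values through an explicit Gaussian integral. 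The dispersive estimate $M^1\to M^\infty$ follows from a direct bound on the Schwartz kernel of $\mu(S)$, which factors through $(\sigma_1(S)\cdots\sigma_d(S))^{-1/2}$; complex interpolation via Proposition \ref{prop-modsp-interpol} against the unitary $L^2=M^2$ bound then produces the stated exponent $|1/2-1/p|$. Preservation of $\cS(\rd)$ is automatic because $\Mp$ is generated by rescalings, chirp multipliers, and partial Fourier transforms, each of which is an automorphism of Schwartz.

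For item (ii), sufficiency uses the factorization of an upper-triangular $S = \bigl(\begin{smallmatrix} A & B \\ 0 & A^{-T} \end{smallmatrix}\bigr)$ as a dilation composed with a chirp multiplication $e^{iCx\cdot x}$, with $C$ symmetric; both factors act as pointwise or dilation operators that respect the mixed $L^p_x L^q_\xi$ structure of $M^{p,q}$ separately. Necessity is the delicate half: a nonzero lower-left block in $S$ shears any rectangular support of $V_g f$ into a parallelogram that mixes spatial and frequency directions, so evaluating the resulting $L^p_x L^q_\xi$-norm on a suitable rank-one Gabor atom produces a blow-up unless $p=q$. Item (iii) reduces to (i)--(ii) via the covariance formula combined with the quasi-invariance $v_s(S^{-1}z) \asymp v_s(z)$ on bounded neighbourhoods of $S$, which transfers unweighted bounds to the weighted setting up to constants depending on the singular values of $S$.

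The main obstacle is the sharpness of the singular-value dependence in (i), requiring the refined Gaussian kernel analysis of \cite{CNT_dispdiag20}, and the necessity half of (ii), where one must classify the symplectic blocks finely enough to exclude $M^{p,q}$-boundedness for every non-triangular $S$; both are handled in detail in the cited references, to which the full proof is deferred.
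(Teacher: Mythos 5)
The paper offers no proof of this proposition: it is stated as a collection of known results and the paper simply cites \cite[Corollary 3.4]{CNT_dispdiag20}, \cite[Theorem 1]{cauli} and \cite[Theorems 3.2 and 4.6]{FS}. Your outline of the underlying ideas in those references is broadly accurate: the symplectic covariance $\mu(S)\pi(z)\mu(S)^{-1}=\omega_S(z)\pi(Sz)$ and the resulting identity $|V_{\mu(S)g}(\mu(S)f)(z)|=|V_g f(S^{-1}z)|$ (your phase should read $\omega_{S^{-1}}(z)$, but that is cosmetic since only the modulus matters) do reduce everything to a change-of-window estimate with the transported Gaussian $\mu(S)g_0$, and the covariance/weight argument for item (iii) and the triangular factorisation plus shear obstruction for item (ii) are the right mechanisms.

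One point in item (i) is imprecise. Interpolating the dispersive estimate $M^1\to M^\infty$ against the unitary bound on $M^2=L^2$ via Proposition \ref{prop-modsp-interpol} yields bounds of the form $M^p\to M^{p'}$, not the diagonal bound $M^p\to M^p$ in the statement. The diagonal estimate with exponent $|1/2-1/p|$ is obtained by interpolating the two \emph{endpoint diagonal} bounds $M^1\to M^1$ and $M^\infty\to M^\infty$ (each with constant $(\sigma_1\cdots\sigma_d)^{1/2}$, coming precisely from your change-of-window computation together with the factor $|\langle\mu(S)g_0,g_0\rangle|^{-1}$ in the reproducing formula, which you suppressed) against the $M^2$ isometry. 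The dispersive $M^1\to M^\infty$ bound is a separate, additional statement and not an input to that interpolation. With this correction, and given that the paper itself defers to the cited sources, your sketch is an adequate reconstruction.
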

	
	It is also well known that metaplectic operators can be characterized as the Schr\"odinger propagators associated with quadratic Hamiltonian operators. For instance, with reference to \eqref{eq-symbol-L}, we have 
	\begin{equation}
		e^{-it\cL} = e^{-itL^\w} = c\mu(L_t), 
	\end{equation} for a suitable $c \in \bC$, $|c|=1$, where
	\begin{equation}\label{eq-bL}
		L_t = e^{-2t\bL}, \quad \bL \coloneqq \begin{bmatrix} -J/2 & I \\ -I/4 & -J/2 \end{bmatrix}. 
	\end{equation}
	Explicit integral representations for $e^{-it\cL}$ can be obtained via Mehler-type formulas for metaplectic operators \cite{CR_book,degoss_11_book,horm-mehler-95} or, as customary in physics, by means of a Wick rotation $t \mapsto it$ in the heat flow setting discussed in Section \ref{sec-heat} (more precisely, via analytic continuation \cite{muller_90}), so that for a suitable $c\in \bC$ with $|c|=1$ we have 
	\begin{equation}\label{eq-schro-rep}
		e^{-it\cL} = c (f \times q_{t}), \qquad q_{t}(z) = (16\pi \sin t)^{-d} e^{\frac{i}{4} \cot(t)|z|^2},
	\end{equation} provided that $t \notin 	\mathfrak{E} \coloneqq \{ k\pi : k \in \bZ\}$. Basic dispersive estimates can be read from this representation, such as 
	\begin{equation}
		\| e^{-it\cL} f \|_{L^\infty} \lesssim (\sin t)^{-d}  \|f\|_{L^1}, \quad f \in \cS(\rdd), \quad t \notin \mathfrak{E}.
	\end{equation} It is worth comparing this result to the parallel modulation space setting in light of the results listed in Proposition \ref{prop-metap-mod}, which need access to the singular values of $L_t$. In fact, the twisted Laplacian happens to be one of those rare cases where a closed-form expression for the companion symplectic matrix $L_t$ can be readily obtained, after noticing that the powers of the Hamiltonian matrix $\bL$ satisfy the identities
	\begin{equation}
		\bL^{2k} = (-1)^{k-1}\bL^2, \qquad \bL^{2k+1} = (-1)^k \bL, \qquad k \in \bN\smo.  
	\end{equation} We leave the explicit computations to the willing reader, and just claim that $L_t$ comes with two singular values $\ell_-(t),\ell_+(t)$ of multiplicity $2d$ each, satisfying $\ell_-(t)\ell_+(t)=1$ and $\ell_+(t) \asymp 1$ uniformly with respect to $t \in \bR$. Therefore, from Proposition \ref{prop-metap-mod} we obtain
	\begin{equation}
		\| e^{-it\cL} f \|_{M^\infty} \le C \|f\|_{M^1}, \quad f \in \cS(\rdd), \quad t \in \bR. 
	\end{equation} 
	A subtler analysis of the phase space evolution requires the tools developed in \cite{CNT_dispdiag20}. We also address the reader to \cite{carypis,CNR_15,pravda_18} for the study of Gabor-type wave front sets (see Remark \ref{rem-wfs} below), and the corresponding problem of propagation of singularities in phase space.
	
	\begin{remark}\label{rem-mult-reg} Let $m$ be the Fourier transform of a finite complex Borel measure $\mu$ on $\bR$, that is
		\begin{equation}
			m(\xi) = (2\pi)^{-1/2} \int_{\bR} e^{-i x \xi} \dd{\mu}(x).
		\end{equation}
		It is then straightforward to check the (pointwise) identity
		\begin{equation}
			m(\cH)f(y) = (2\pi)^{-1/2} \int_{\bR} e^{-ix\cH}f(y) \dd{\mu}(x), \qquad y \in \rd, \, f \in \cS(\rd).
		\end{equation}
		The Schr\"odinger propagator $e^{-ix\cH}$ is a metaplectic operator satisfying $\|e^{-ix\cH} f\|_{M^p} \le C \|f\|_{M^p}$, where the constant $C$ does not depend on $x$ in view of the periodic phase space dynamics of the harmonic oscillator. As a result, $m(\cH)$ is a bounded operator on $M^p(\rd)$ for all $1 \le p \le \infty$, and similar arguments apply to $m(\cL)$. 
	\end{remark} 
	
	\section{A transference principle for the spectral multipliers of $\cL$}\label{sec-transf}
	
	As anticipated in the Introduction, we first establish a unitary (in fact, metaplectic) intertwining relationship between the twisted Laplacian and a partial harmonic oscillator. This result is consistent with findings obtained in \cite{buzano,gramchev_10,gramchev_09} and will be the main ingredient of the subsequent transference principle. 
	
	\begin{proposition}
		Let $\cA_J$ be the operator defined by
		\begin{equation}\label{eq-AJ-def}
			\cA_J f (x,y) \coloneqq (2\pi)^{-d} \ird e^{i x \cdot u} f(u+y/2,u-y/2) \dd{u}, \qquad f \in \cS(\rdd). 
		\end{equation}
		\begin{enumerate}[label=(\roman*)]\setlength\itemsep{0.5cm}
			\item $\cA_J$ is a metaplectic operator, namely
			\begin{equation}
				\cA_J = c \mu(A_J), \qquad A_J = \begin{bmatrix}
					A_1 & A_2 \\ A_3 & A_4 \end{bmatrix},
			\end{equation} for some $c \in \bC$, $|c|=1$, where the $2d\times 2d$ blocks of $A_J$ are given by
			\begin{equation}
				A_1 = \begin{bmatrix} -I/2 & -I/2 \\ O & O \end{bmatrix}, \quad A_2 = \begin{bmatrix} O & O \\ I/2 & -I \end{bmatrix}, \quad A_3 = \begin{bmatrix} O & O \\ I & -I \end{bmatrix}, \quad A_4 = \begin{bmatrix} -I/2 & -I \\ O & O \end{bmatrix}.
			\end{equation}
			As such, $\cA_J$ is a unitary operator on $L^2(\rdd)$ that extends to a continuous operator on $M^p_s(\rdd)$ for all $1 \le p \le \infty$ and $s \in \bR$.
			
			\item The following intertwining relation holds:
			\begin{equation}\label{eq-AJ-intertw}
				\cA_J (I \otimes \cH) = \cL \cA_J. 
			\end{equation}
		\end{enumerate}
	\end{proposition}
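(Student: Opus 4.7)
The plan is to first establish the intertwining relation (ii) by differentiating under the integral sign, and then to identify $\cA_J$ as a metaplectic operator with symplectic matrix $A_J$---either by direct decomposition of $\cA_J$ into elementary metaplectic generators, or by exploiting the covariance principle together with the relation obtained in (ii). For (ii), set $g=\cA_J f$ and denote by $M_j^{(2)}$, $\partial^{(2)}_j$ the multiplication and differentiation operators in the $j$-th coordinate of the second $\rd$-argument of $f$. Differentiation under the integral gives
\[(\partial_{x_j}-i y_j/2)\,g(x,y) = i(2\pi)^{-d}\ird (u_j-y_j/2)\,e^{ix\cdot u}f(u+y/2,u-y/2)\dd{u} = i\,\cA_J(M_j^{(2)} f)(x,y),\]
since $u_j - y_j/2$ is precisely the $j$-th coordinate of the second argument of $f$. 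For the $y$-derivative, the chain rule produces $\partial_{y_j} f(u+y/2,u-y/2) = \tfrac12[(\partial^{(1)}_j-\partial^{(2)}_j)f](u+y/2,u-y/2)$, while the multiplier $i x_j/2$ is treated by the identity $i x_j\, e^{ix\cdot u}=\partial_{u_j} e^{ix\cdot u}$ and integration by parts in $u$, which produces a term proportional to $(\partial^{(1)}_j+\partial^{(2)}_j) f$. Adding the two contributions, the $\partial^{(1)}_j$ terms cancel and we obtain $(\partial_{y_j}+i x_j/2)\,g = -\cA_J(\partial^{(2)}_j f)$. Squaring both identities and summing over $j$ yields $\cL\cA_J f = \sum_{j=1}^d\cA_J\big((M_j^{(2)})^2-(\partial^{(2)}_j)^2\big)f = \cA_J(I\otimes\cH)f$, which is (ii).

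For (i), I would factor $\cA_J$ up to a multiplicative constant as $\cF_1^{-1}\circ R$, where $R f(u,v)\coloneqq f(u+v/2, u-v/2)$ is a linear substitution on $\rdd$ with Jacobian determinant of absolute value $1$, and $\cF_1^{-1}$ is the inverse partial Fourier transform in the first $d$ variables. Both factors are elementary metaplectic generators: $R$ is the lift of the $\mathrm{GL}(2d,\bR)$-substitution $M=\begin{bmatrix}I & I/2 \\ I & -I/2\end{bmatrix}$, giving rise to $\mu(\mathrm{diag}(M^{-1},M^T))$, while $\cF_1^{-1}$ corresponds to a block-antidiagonal symplectic matrix encoding the Fourier transform on the first $\rd$-factor. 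Multiplying the two block matrices and simplifying produces the $4d\times 4d$ matrix $A_J$ of the statement, identifying $\cA_J = c\mu(A_J)$ for some $c\in\bC$ with $|c|=1$. As a cleaner alternative, once (ii) is available one may invoke the metaplectic covariance $\mu(S)a^\w\mu(S)^{-1}=(a\circ S^{-1})^\w$ with $a$ the Weyl symbol of $I\otimes\cH$: equating $\cL=(G\circ S^{-1})^\w$ where $G(x,y,\xi,\eta)=y^2+\eta^2$ determines $S$ up to the stabiliser of $G$, and a direct check confirms $S=A_J$. The continuity on $M^p_s(\rdd)$ for every $1\le p\le\infty$ and $s\in\bR$ is then immediate from Proposition~\ref{prop-metap-mod}.

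The main technical obstacle is the precise bookkeeping required in the metaplectic identification: the Jacobian of $R$, the normalization of the partial Fourier transform, and the unimodular phase $c$ must all be tracked simultaneously in order to reproduce the four $2d\times 2d$ blocks $A_1,\ldots,A_4$ displayed in the statement. A useful preliminary sanity check is to verify that the matrix $A_J$ is itself symplectic, i.e.\ that $A_1^T A_3$ and $A_2^T A_4$ are symmetric and $A_1^T A_4-A_3^TA_2=I$, which reduces to a short block-matrix computation that can be carried out in a few lines.
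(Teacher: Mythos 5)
Your proposal is correct. For part (i) you take essentially the same route as the paper: factoring $\cA_J$ (up to a unimodular constant) as $\cF_1^{-1}$ composed with a linear substitution, recognizing each factor as an elementary metaplectic generator, and leaving the block-matrix bookkeeping as a computation. The secondary suggestion of pinning down $A_J$ via metaplectic covariance and the intertwining relation (ii) is a nice sanity check, but as you correctly note it only determines $S$ up to the stabiliser of the symbol $y^2+\eta^2$, so the decomposition route remains the one that actually produces $A_J$.

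For part (ii) your argument differs from the paper's. You prove the first-order operator identities
\[
\Big(\partial_{x_j}-\tfrac{i}{2}y_j\Big)\,\cA_J f = i\,\cA_J\big(M_j^{(2)}f\big),
\qquad
\Big(\partial_{y_j}+\tfrac{i}{2}x_j\Big)\,\cA_J f = -\,\cA_J\big(\partial_j^{(2)}f\big),
\]
by differentiating under the integral, the chain rule, and an integration by parts in $u$ that converts the multiplier $ix_j$ into $-\partial_{u_j}$ so that the $\partial^{(1)}_j$ contributions cancel; squaring and summing then gives $\cL\cA_J=\cA_J(I\otimes\cH)$. I checked the signs and they work out. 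The paper instead proves (ii) spectrally: it observes the formula $\cA_J(\Phi_\a\otimes\Phi_\b)=\Phi_{\a,\b}$, expands an arbitrary $f\in\cS(\rdd)$ in the basis $\{\Phi_\a\otimes\Phi_\b\}$, and uses that both sides act as multiplication by $d+2|\b|$ on the corresponding basis vectors. The two approaches trade off as follows: the paper's argument is short once one already knows the special Hermite eigenfunctions, and it delivers the identity $\cA_J(\Phi_\a\otimes\Phi_\b)=\Phi_{\a,\b}$ as a byproduct (this identity is reused later, for instance when discussing fractional powers); your argument is more elementary, requires no prior knowledge of the eigenfunctions, and is structurally more informative in that it exhibits the unitary intertwining already at the level of the first-order Landau-Weyl operators $V_j,W_j$ with the position and momentum operators $M_j^{(2)},\,-i\partial_j^{(2)}$, so the second-order identity follows for free. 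Both are valid proofs.
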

	
	\begin{proof}
		We have $\cA_J = \cF_1^{-1} T f$, where $\cF_1$ denotes the partial Fourier transform of a function on $\rdd$ with respect to the first group of $d$ variables, that is
		\begin{equation}
			\cF_1 f(\xi,y) = (2\pi)^{-d/2} \ird e^{-i\xi \cdot x} f(x,y) \dd{x}, 
		\end{equation} and $T$ amounts (up to a factor) to a linear change of variables: 
		\begin{equation}
			Tf(u,v) \coloneqq (2\pi)^{-d/2} f(u+v/2,u-v/2).
		\end{equation} 
		It is well known that partial Fourier transforms and linear changes of variables are metaplectic operators --- see for instance \cite{degoss_11_book,morsche}. Boundedness on modulation spaces follows from Proposition \ref{prop-metap-mod}. The details about determining the block structure of the matrix $\cA_J$ are left to the interested reader. 
		
		Concerning the intertwining property, it is enough to show that $\cA_J(I\otimes \cH) f$ and $\cL \cA_J f$ coincide for all $f \in \cS(\rdd)$. To this aim, let us first highlight that $\cA_J$ satisfies \begin{equation}\label{eq-Aj-hermfun}
			\cA_J(\Phi_\a \otimes \Phi_\b)(x,y) = A(\Phi_\a,\Phi_\b)(y,-x) = \Phi_{\a,\b}(x,y). 
		\end{equation}
		Therefore, we expand an arbitrarily chosen $f \in \cS(\rdd)$ with respect to the orthonormal basis $\{\Phi_\a \otimes \Phi_\b : \a,\b \in \bN^d\}$ and, setting $c_{\a,\b} = \lan f, \Phi_\a\otimes \Phi_\b \ran$, we get
		\begin{align*}
			\cA_J (I \otimes \cH) f & = \sum_{\a,\b \in \bN^d} c_{\a,\b} \cA_J (I\otimes \cH) ( \Phi_\a \otimes \Phi_\b) \\
			& = \sum_{\a,\b \in \bN^d} c_{\a,\b} \cA_J ( \Phi_\a \otimes (d+2|\b|)\Phi_\b) \\
			& = \sum_{\a,\b \in \bN^d} (d+2|\b|)c_{\a,\b} \Phi_{\a,\b} \\
			& = \sum_{\a,\b \in \bN^d} c_{\a,\b} \cL \Phi_{\a,\b} \\
			& = \cL \cA_J f. \qedhere
		\end{align*}
	\end{proof}
	
	As a consequence of the previous result, we infer that $\cL$ is unitarily equivalent to $I \otimes \cH$ on $\cS(\rdd)$. In particular, they are isospectral operators with unitarily equivalent spectral projections, hence unitarily equivalent functional calculi: for all $m \in L^\infty(d+2\bN)$, we have 
	\begin{equation}\label{eq-mL-intertw}
		m(\cL) = \cA_J m(I\otimes \cH) \cA_J^*. 
	\end{equation} 
	It is straightforward to check that $\sigma(I \otimes \cH) = \sigma(\cH) = d+2\bN$, the eigenspace of $I \otimes \cH$ associated with the eigenvalue $d+2k$ being spanned by $\{ g_j \otimes \Phi_\b : j \in \bN, |\b|=k\}$, where $(g_j)_{j \in \bN}$ is any orthonormal basis of $L^2(\rd)$. Therefore, expansion with respect to the orthonormal basis $\{\Phi_\a \otimes \Phi_\b : \a,\b \in \bN^d\}$ yields the following result. 
	\begin{lemma}\label{lem-spec_tensor}
		If $m \in L^\infty(d+2\bN)$, then $m(I\otimes \cH) =I \otimes m(\cH)$. 
	\end{lemma}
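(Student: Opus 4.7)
The plan is to verify the identity by showing that both operators agree on the orthonormal basis $\{\Phi_\alpha \otimes \Phi_\beta : \alpha,\beta \in \bN^d\}$ of $L^2(\rdd)$, and then extend by linearity and boundedness. Since $m$ is bounded on $\sigma(\cH) = \sigma(I \otimes \cH) = d+2\bN$, both $m(\cH)$ and $m(I \otimes \cH)$ are bounded operators on their respective $L^2$ spaces by the standard functional calculus, so $I \otimes m(\cH)$ is a well-defined bounded operator on $L^2(\rd) \otimes L^2(\rd) \cong L^2(\rdd)$.

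First I would compute each side on a basis vector. For the left-hand side, since $\Phi_\alpha \otimes \Phi_\beta$ is an eigenfunction of $I \otimes \cH$ with eigenvalue $d+2|\beta|$ (the factor $\Phi_\alpha$ is unaffected, while $\cH \Phi_\beta = (d+2|\beta|)\Phi_\beta$), the spectral theorem gives
\begin{equation}
    m(I\otimes \cH)(\Phi_\alpha \otimes \Phi_\beta) = m(d+2|\beta|)\, \Phi_\alpha \otimes \Phi_\beta.
\end{equation}
For the right-hand side, the defining property of the tensor product of operators yields
\begin{equation}
    (I \otimes m(\cH))(\Phi_\alpha \otimes \Phi_\beta) = \Phi_\alpha \otimes m(\cH)\Phi_\beta = m(d+2|\beta|)\, \Phi_\alpha \otimes \Phi_\beta,
\end{equation}
since $\Phi_\beta$ lies in the eigenspace $P_{|\beta|}$ and hence $m(\cH)\Phi_\beta = m(d+2|\beta|)\Phi_\beta$. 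The two sides agree on every basis element.

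To conclude, I would expand an arbitrary $f \in L^2(\rdd)$ as $f = \sum_{\alpha,\beta} c_{\alpha,\beta}\, \Phi_\alpha \otimes \Phi_\beta$ with $c_{\alpha,\beta} = \langle f, \Phi_\alpha \otimes \Phi_\beta\rangle$, and use the $L^2$-continuity of both operators to interchange the operator with the sum, giving
\begin{equation}
    m(I\otimes\cH)f = \sum_{\alpha,\beta \in \bN^d} m(d+2|\beta|)\, c_{\alpha,\beta}\, \Phi_\alpha \otimes \Phi_\beta = (I\otimes m(\cH))f.
\end{equation}
There is no substantial obstacle here: the only point that requires care is that the eigenspace of $I \otimes \cH$ for the eigenvalue $d+2k$ is spanned by $\{\Phi_\alpha \otimes \Phi_\beta : \alpha \in \bN^d,\ |\beta|=k\}$ (as already noted in the excerpt), so that the spectral projection of $I\otimes\cH$ onto this level coincides with $I \otimes P_k$, from which the identity also follows directly by summation against $m(d+2k)$.
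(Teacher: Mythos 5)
Your proof is correct and takes essentially the same approach the paper sketches: the paper notes that the eigenspace of $I\otimes\cH$ for the eigenvalue $d+2k$ is spanned by appropriate elementary tensors and then states that expanding against the orthonormal basis $\{\Phi_\alpha\otimes\Phi_\beta\}$ yields the lemma, which is precisely the computation you carry out and complete with the $L^2$-continuity argument.
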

	We are ready to prove a transference principle for spectral functions of the twisted Laplacian in terms of the corresponding ones for the harmonic oscillator. 
	
	\begin{theorem}\label{thm-transf}
		Let $m \in L^\infty(d+2\bN)$ and consider the spectral multipliers $m(\cL)$ on $L^2(\rdd)$ and $m(\cH)$ on $L^2(\rd)$. 
		
		If $m(\cH)$ is a bounded operator $M^p_s(\rd)\to M^p_r(\rd)$ for some $s,r \ge 0$ and every $1 \le p \le \infty$, then $m(\cL)$ is a bounded operator $M^p_s(\rdd) \to M^p(\rdd)$, $1 \le p \le \infty$, satisfying
		\begin{equation}
			\| m(\cL) \|_{M^p_s \to M^p} \le C \|m(\cH)\|^{1/p}_{M^1_s \to M^1_{r}}\|m(\cH)\|^{1/p'}_{M^\infty_s \to M^\infty_{r}}, 
		\end{equation} for some constant $C>0$. 
	\end{theorem}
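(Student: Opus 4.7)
The plan is to combine the intertwining relation \eqref{eq-mL-intertw} with Lemma \ref{lem-spec_tensor}, which together yield the factorization $m(\cL) = \cA_J (I \otimes m(\cH)) \cA_J^*$ on $\cS(\rdd)$. Since $\cA_J$ is a metaplectic operator and $s \ge 0$, Proposition \ref{prop-metap-mod} implies that both $\cA_J$ and $\cA_J^*$ are bounded on $M^p_s(\rdd)$ for every $1 \le p \le \infty$. The task therefore reduces to proving that $I \otimes m(\cH)$ acts boundedly from $M^p_s(\rdd)$ into $M^p(\rdd)$, with operator norm dominated by the geometric mean $\|m(\cH)\|_{M^1_s \to M^1_r}^{1/p}\|m(\cH)\|_{M^\infty_s \to M^\infty_r}^{1/p'}$.

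I would first settle the two endpoint cases $p=1$ and $p=\infty$ via the kernel characterization in Proposition \ref{prop-kernelt}. Letting $K \in \cS'(\rdd)$ denote the Schwartz kernel of $m(\cH)$, the kernel of $I\otimes m(\cH)$ on $\rdq$ is the tensor distribution $\widetilde K(x_1,x_2,y_1,y_2) = \delta(x_1-y_1)\, K(x_2,y_2)$. Choosing a tensor window $\widetilde G = G_a \otimes G_b$ with $G_a,G_b \in \cS(\rdd)\smo$, the Gabor transform $V_{\widetilde G}\widetilde K$ factors as the product of $V_{G_a}$ applied to the diagonal delta and $V_{G_b} K$. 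The hypothesis $s \ge 0$ allows one to dominate $\langle (z,w)\rangle^{-s} \le \langle (z_2,w_2)\rangle^{-s}$, so the integral $I_1$ in \eqref{eq-ker-I1} splits as a product of two factors: a universal constant — the supremum over the first pair of phase-space variables of the integral involving $V_{G_a}$ of the diagonal delta, which is finite by the Schwartz regularity of $G_a$ — times a second factor bounded by the $M^1_s \to M^1$ kernel quantity for $m(\cH)$, and hence by $\|m(\cH)\|_{M^1_s\to M^1_r}$ via the embedding $M^1_r \hookrightarrow M^1$ valid for $r \ge 0$. An entirely parallel computation using \eqref{eq-ker-Iinf} produces the $M^\infty_s \to M^\infty$ endpoint estimate.

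The conclusion then follows by complex interpolation through Proposition \ref{prop-modsp-interpol}, applied to the endpoints $(p_1,q_1)=(\infty,\infty)$ and $(p_2,q_2)=(1,1)$ with parameter $\theta = 1/p$, yielding the stated bound with the desired geometric mean of the two endpoint constants for every $1 \le p \le \infty$. The main obstacle I anticipate lies in the endpoint step: the tensor factorization of $V_{\widetilde G}\widetilde K$ must be justified carefully in the distributional sense (since the diagonal delta is not a function), and the non-multiplicative weight $\langle (z,w)\rangle^{-s}$ has to be relocated entirely onto the second factor via the lossy but sufficient inequality above — this is what forces the loss of the target weight $r$ in the final conclusion. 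Once this bookkeeping is in place the argument is essentially mechanical, the finiteness of the delta contribution being guaranteed by the Schwartz regularity of $G_a$ and the explicit form of the Gabor transform of $\delta_\Delta$, while the weighted kernel norm of $m(\cH)$ supplies the only sensitive factor.
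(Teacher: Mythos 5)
Your proposal follows the same skeleton as the paper's proof: reduce to $I\otimes m(\cH)$ via the metaplectic intertwining \eqref{eq-mL-intertw} and Lemma \ref{lem-spec_tensor}, establish the endpoint bounds $M^1_s\to M^1$ and $M^\infty_s\to M^\infty$ through the kernel characterization of Proposition \ref{prop-kernelt}, then interpolate via Proposition \ref{prop-modsp-interpol}. The one place you deviate is the endpoint computation, and there the paper's execution is cleaner: rather than writing out the Schwartz kernel of $I\otimes m(\cH)$ explicitly as a (variable-permuted) tensor of the diagonal delta with $K_{m(\cH)}$ and then factoring $V_{\widetilde G}\widetilde K$ distributionally --- which, as you anticipate, requires careful bookkeeping of the diagonal delta and the interleaved variable ordering --- the paper reverts to the operator side of the kernel theorem via the identity $|V_G K_T(u,z,v,-w)|=|\lan T\pi(z,w)g,\pi(u,v)g\ran|$ (its display \eqref{eq-ker-gabmatr}). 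Plugging in $T=I\otimes m(\cH)$ and a tensor window $g=g_0\otimes g_0$, the factorization $|V_{g_0}g_0(u_1-z_1,v_1-w_1)|\cdot|V_g K_{m(\cH)}(u_2,z_2,v_2,-w_2)|$ then drops out immediately from \eqref{eq-tensor-pi}, with no delta distribution or permutation to track; the weight relocation $\lan(z,w)\ran^{-s}\le\lan(z_2,w_2)\ran^{-s}$ is the same in both versions. Your approach is correct in principle, but you would need to spell out the Gabor transform of the diagonal delta and the reindexing explicitly to close the gap you flag; the paper's choice makes that step purely mechanical.
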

	
	\begin{proof} We argue by interpolation between the extremal cases $M^1_s(\rdd) \to M^1_{}(\rdd)$ and $M^\infty_s(\rdd) \to M^\infty_{}(\rdd)$. Detailed arguments are given here for the latter case only, since the other one is identical up to obvious modifications. 
		
		With reference to Proposition \ref{prop-kernelt}, let us consider $I_\infty$ with $n=2d$, $b=0$ and $a=s \ge 0$. It is not restrictive to assume that $G=g \otimes g$ with $g=g_0\otimes g_0 \in \cS(\rdd)$ for some real-valued $g_0 \in \cS(\rd)\smo$. Combining the definition of the Gabor transform, the Schwartz kernel theorem and the identity \eqref{eq-tensor-pi}, we get
		\begin{align} \label{eq-ker-gabmatr}
			|V_G K_T(u,z,v,-w)| & = |\lan K, \pi(u,z,v,-w)g\otimes g \ran| \nonumber \\
			& = |\lan K_T, \pi(u,v)g \otimes \pi(z,-w)g \ran| \nonumber \\
			& = |\lan T \pi(z,w)g, \pi(u,v)g \ran|.
		\end{align} In the case where $T=I \otimes m(\cH)$ we have in particular
		\begin{align*}
			|V_G K_T(u,z,v,-w)|  
			& = |\lan T \pi(z,w)g, \pi(u,v)g \ran| \\
			& = |\lan (I\otimes m(\cH)) \pi(z_1,w_1)g_0 \otimes \pi(z_2,w_2)g_0, \pi(u_1,v_1)g_0 \otimes \pi(u_2,v_2)g_0 \ran| \\
			& = |\lan \pi(z_1,w_1)g_0,\pi(u_1,v_1)g_0 \ran| |\lan m(\cH) \pi(z_2,w_2)g_0, \pi(u_2,v_2)g_0 \ran| \\
			& = |V_{g_0}g_0(u_1-z_1,v_1-w_1)| |V_g K_{m(\cH)} (u_2,z_2,v_2,-w_2)|. 
		\end{align*} 
		Using that $\lan (x_1,x_2,y_1,y_2) \ran^{-q} \le \lan (x_2,y_2)\ran^{-q} \le 1$ for all $x_1,x_2,y_1,y_2 \in \rd$ and $q \ge 0$, the initial condition on the kernel of $T$ reduces to the finiteness of
		\begin{multline}
			I_\infty = \Big( \sup_{(u_1,v_1) \in \rd \times \rd} \int_{\rd \times \rd} |V_{g_0}g_0(u_1-z_1,v_1-w_1)| \dd{z_1} \dd{w_1} \Big) \\ \times \Big( \sup_{(u_2,v_2) \in \rd \times \rd} \lan (u_2,v_2) \ran^{r}\int_{\rd \times \rd} |V_g K_{m(\cH)} (u_2,z_2,v_2,-w_2)| \lan (z_2,w_2) \ran^{-s} \dd{z_2} \dd{w_2} \Big).
		\end{multline} 
		The first factor is finite since $g_0 \in \cS(\rd)$ implies $V_{g_0}g_0 \in \cS(\rd)$ --- see for instance \cite[Theorem 1.2.23]{CR_book}. Concerning the second one, from \eqref{eq-ker-gabmatr} we recognize that the finiteness of this quantity is equivalent to the condition \eqref{eq-ker-Iinf} for the kernel of the Hermite multiplier $m(\cH)$, which in turn characterizes the boundedness of the latter as an operator $M^\infty_s(\rd) \to M^\infty_{r}(\rd)$. Since this is true by assumption, we infer that the second factor is finite --- and actually equivalent to the operator norm $\|m(\cH)\|_{M^\infty_s \to M^\infty_{r}}$, so that 
		\begin{equation}
			\| I \otimes m(\cH) \|_{M^\infty_s \to M^\infty_{}} \lesssim  \|m(\cH)\|_{M^\infty_s \to M^\infty_{r}}.
		\end{equation}
		We then resort to Proposition \ref{prop-metap-mod} and Lemma \ref{lem-spec_tensor} to conclude that, for all $f \in \cS(\rdd)$, 
		\begin{align*}
			\|m(\cL)f\|_{M^\infty_{}} & = \|\cA_J m(I\otimes \cH)\cA_J^* f\|_{M^\infty_{}} \\
			& \lesssim \| (I \otimes m(\cH))\cA_J ^* f\|_{M^\infty_{}} \\
			& \le \| (I \otimes m(\cH))\|_{M^\infty_{s} \to M^\infty_{}} \|\cA_J^* f\|_{M^\infty_{s}} \\
			& \lesssim \|m(\cH)\|_{M^\infty_{s} \to M^\infty_{r}} \| f\|_{M^\infty_{s}}.
		\end{align*}
		Repeating the argument for $I_1$, we obtain similarly
		\begin{equation}
			\|m(\cL)\|_{M^1_s \to M^1} \lesssim \|m(\cH)\|_{M^1_s \to M^1_{r}}.
		\end{equation}
		
		Finally, we have proved that $m(\cL)$ is a bounded operator on both weighted $M^1(\rdd)$ and $M^\infty(\rdd)$, which is enough to obtain boundedness on all the intermediate modulation spaces $M^p(\rdd)$, $1 < p < \infty$, by complex interpolation via Proposition \ref{prop-modsp-interpol}.
	\end{proof}
	
	\begin{remark}
		In view of Proposition \ref{prop-metap-mod}, given that the symplectic matrix $A_J$ associated with the intertwining metaplectic operator $\cA_J$ is not block triangular, the transference principle in Theorem \ref{thm-transf} does not extend to $M^{p,q}$ with $p\ne q$.
		
		\noindent We emphasize that our proof exploits interpolation between characterizations of boundedness on the ``endpoint'' modulation spaces $M^1$ and $M^\infty$. Similar Schur-type characterizations at the level of the kernel in the intermediate cases $L^p\to L^q$ are known to be hard to establish --- sufficient (but non-necessary) conditions to prove boundedness $M^p \to M^q$ of an operator in terms of the regularity of its Gabor matrix can be found for instance in \cite[Proposition 3.28]{fei_22}. It is natural to wonder whether a different proof strategy could lead to a sharper transference result, such as: if $m(\cH)$ is bounded on $M^p(\rd)$, $1 < p < \infty$, then $m(\cL)$ is bounded on $M^p(\rdd)$ as well, with $
		\|m(\cL) \|_{M^p \to M^p} \lesssim \|m(\cH)\|_{M^p \to M^p}$. 
	\end{remark}
	
	\begin{remark} \label{rem-general-transf} It is easy to realize that, up to minor adjustments in the proof, the transference Theorem \ref{thm-transf} directly applies to $\cH$ and $\cL$ in place of $m(\cH)$ and $m(\cL)$ respectively. In fact, taking care of domain issues, the result also extends to more general (e.g., unbounded) spectral functions. A closer inspection of the proof of Theorem \ref{thm-transf} actually reveals that the claim still holds if $m(\cH)$ and $m(\cL)$ are replaced by linear continuous operator $T \colon \cS(\rd) \to \cS'(\rd)$ and $U(I \otimes T)V$ respectively, where $U,V$ are linear operators that are bounded on every modulation space $M^p_s(\rdd)$ with $1 \le p \le \infty$ and $s\ge 0$. 
	\end{remark}
	
	\subsection{Boundedness of $\cL$ and its fractional powers}\label{sec-fracpow}
	
	In light of the transference principle just discussed, let us investigate the boundedness of $\cL$ and its (real) fractional powers on modulation spaces. We are preliminarily led to examine the continuity of the harmonic oscillator and its powers, for which a comprehensive pseudo-differential analysis is available. 
	
	\begin{proposition}\label{prop-powerH}
		The operator $\cH^\nu$, with $\nu \in \bR$ and densely defined on $\cS(\rd)$, is pseudo-differential with Weyl symbol in the Shubin class $\Gamma^{2\nu}(\rdd)$, hence it extends to a bounded operator $M^{p,q}_{s+2\nu}(\rd)\to M^{p,q}_{s}(\rd)$ for all $1 \le p,q \le \infty$ and $s \in \bR$. 
	\end{proposition}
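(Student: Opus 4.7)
The plan is to first establish that $\cH^\nu$ is a Weyl pseudo-differential operator with symbol in $\Gamma^{2\nu}(\rdd)$, and then appeal directly to Proposition \ref{prop-weyl-sm} via the inclusion $\Gamma^{2\nu}(\rdd) \subset S^{2\nu}(\rdd)$ to obtain the stated boundedness on (weighted) modulation spaces. The modulation-space half of the argument is thus essentially for free once the symbol class is identified.

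For the identification of the symbol class I would distinguish two regimes. When $\nu = N \in \bN$, the Weyl symbol of $\cH^N$ is the Weyl-star power $H \# \cdots \# H$ ($N$ factors) of the quadratic polynomial $H(q,p) = q^2 + p^2$. Since the Weyl product of polynomials is a polynomial of degree equal to the sum of the factor degrees, the symbol of $\cH^N$ is a polynomial of total degree $2N$ in $(q,p)$, and such polynomials trivially satisfy the Shubin estimates $|\partial^\alpha a(z)| \le C_\alpha \langle z \rangle^{2N-|\alpha|}$. For general $\nu \in \bR$ I would invoke the classical theory of complex powers of globally elliptic Shubin operators: the Weyl symbol $H(q,p) = q^2 + p^2$ is globally elliptic in the sense that $H(z) \gtrsim \langle z \rangle^2$ for $|z|$ large, so by the Shubin-Seeley construction the resolvent $(\lambda I - \cH)^{-1}$ is a pseudo-differential operator with symbol in $\Gamma^{-2}(\rdd)$ depending holomorphically on $\lambda$ outside the spectrum, and the contour-integral representation
\begin{equation}
\cH^\nu = \frac{1}{2\pi i} \int_\Gamma \lambda^\nu (\lambda I - \cH)^{-1} \dd{\lambda}
\end{equation}
(for a suitable keyhole contour $\Gamma$ enclosing $\sigma(\cH) \subset (0,\infty)$) then yields a Weyl symbol in $\Gamma^{2\nu}(\rdd)$. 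This is the content of Shubin's treatment of complex powers of globally elliptic operators; for negative $\nu$ with $\cH^\nu$ defined on the full range of $\cH$, the spectral gap $\sigma(\cH) \ge d$ ensures that the contour integral is absolutely convergent and that $\cH^\nu$ extends to a bounded operator on $L^2(\rd)$.

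With the symbol membership in hand, Proposition \ref{prop-weyl-sm} applied to the class $S^{2\nu}(\rdd) \supset \Gamma^{2\nu}(\rdd)$ yields, for every $1 \le p,q \le \infty$ and $s \in \bR$, the bound
\begin{equation}
\|\cH^\nu f\|_{M^{p,q}_s} \le C \|f\|_{M^{p,q}_{s+2\nu}}, \qquad f \in \cS(\rd),
\end{equation}
with $C$ controlled by finitely many Shubin seminorms of the symbol. By density of $\cS(\rd)$ in $M^{p,q}_{s+2\nu}(\rd)$ when $\max\{p,q\} < \infty$, and by the convention adopted for $p=\infty$ or $q=\infty$ after Proposition \ref{prop-modsp-interpol}, this extends $\cH^\nu$ to the desired bounded operator $M^{p,q}_{s+2\nu}(\rd) \to M^{p,q}_s(\rd)$.

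The main obstacle is the construction of the complex powers for non-integer $\nu$: one cannot compute the Weyl symbol explicitly and must rely on the abstract Seeley calculus for Shubin-elliptic operators, which requires the holomorphy of the resolvent symbol in $\lambda$ together with uniform $\Gamma^{-2}$ estimates as $|\lambda| \to \infty$. Once these bounds are established, the remainder of the argument is a routine application of the pseudo-differential continuity theorem on modulation spaces recalled in Proposition \ref{prop-weyl-sm}.
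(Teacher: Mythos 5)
Your argument is correct and shares the paper's overall architecture: identify the Weyl symbol of $\cH^\nu$ as a member of $\Gamma^{2\nu}(\rdd)$, then pass through the embedding $\Gamma^{2\nu}\subset S^{2\nu}$ and Proposition~\ref{prop-weyl-sm}. The difference lies in how the symbol class is established. The paper splits into $\nu>0$, which it dispatches by citing \cite[Proposition~2.3]{bhimani_21} (the general theory of positive powers of globally elliptic Shubin operators), and $\nu<0$, for which it writes $\cH^\nu=(\cH^{-\nu})^{-1}$ and invokes the symbolic calculus for inverses (\cite[Theorem~25.4]{shubin}), valid since $0\notin\sigma(\cH)$. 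You instead treat all $\nu$ at once via the Seeley contour-integral construction
\begin{equation}
\cH^\nu=\frac{1}{2\pi i}\int_\Gamma \lambda^\nu(\lambda I-\cH)^{-1}\,\dd{\lambda},
\end{equation}
which is the same underlying machinery packaged differently --- both routes ultimately rest on parameter-dependent $\Gamma^{-2}$ resolvent estimates. One point to tighten: a keyhole contour enclosing the whole unbounded spectrum $\sigma(\cH)\subset[d,\infty)$ gives an absolutely convergent integral only when $\nu<0$; for $\nu\ge 0$ you must reduce to that case by writing $\cH^\nu=\cH^N\cH^{\nu-N}$ with $N\in\bN$ large and composing with your polynomial-symbol observation for integer powers. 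You separate out $\nu\in\bN$ at the start, which suggests you have this reduction in mind, but the gluing step should be made explicit, since otherwise the displayed formula does not converge for the positive fractional powers used later (e.g., $\cH^{\gamma/2}$ with $\gamma\le 1$ in Theorem~\ref{thm-hermite-oscmult}).
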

	\begin{proof}
		If $\nu >0$, the first part of the claim follows from the general theory of positive powers of globally elliptic Weyl operators with Shubin symbols, and is a restatement of \cite[Proposition 2.3]{bhimani_21} --- see the corresponding proof for additional details. The claim then follows by symbolic calculus \cite[Theorem 25.4]{shubin} in the case where $\nu<0$, since $\cH^\nu= (\cH^{-\nu})^{-1}$ and $0\notin \sigma(\cH)$. To conclude, boundedness on modulation spaces follows by Proposition \ref{prop-weyl-sm}, in view of the embedding $\Gamma^{2\nu} \subset S^{2\nu}$.
	\end{proof}
	
	The results in Proposition \ref{prop-powerH} motivates the analysis of general real powers of $\cL$. Given $\nu \in \bR$, we define here the fractional powers $\cL^\nu$ by spectral expansions such as
	\begin{equation} \label{eq-def-Lnu}
		\cL^\nu f \coloneqq \sum_{k \in \bN} (d+2k)^\nu Q_k f, \qquad f \in \cQ \coloneqq \text{span}\{\Phi_{\a,\b} : \a,\b \in \bN^d\}.
	\end{equation} Note that $\cQ$ is a dense subset of $\cS(\rdd)$ --- in fact, one can assume $f \in \cS(\rdd)$ as well.  
	
	If $\nu \le 0$ then boundedness of $\cL^\nu$ on $M^p(\rdd)$ follows by straightforward application of Theorem \ref{thm-transf} --- notably, at the price of the loss of the smoothing effect associated with the action of $\cH^\nu$. 
	
	The case $\nu>0$ requires additional comments, since the scope of Theorem \ref{thm-transf} is restricted to bounded multipliers. Note that the operators $\cL^\nu$ (densely defined on $\cQ$) and $(I\otimes \cH)^\nu$ (densely defined on $\cP \coloneqq \spann\{ \Phi_\a \otimes \Phi_\b : \a,\b \in \bN^d\}$) are unitarily equivalent via $\cA_J$, with $\cA_J^*(\cQ)=\cP$ as a consequence of \eqref{eq-Aj-hermfun}. One can also prove that, as in Lemma \ref{lem-spec_tensor}, $(I\otimes \cH)^\nu$ and $I\otimes \cH^\nu$ coincide on the (non-empty) intersection of their domains, for instance on $\cP$ or $\cS(\rdd)$. Transference in this setting then follows as outlined in Remark \ref{rem-general-transf}. 
	
	Let us distill the previous discussion into a boundedness result for the fractional twisted Laplacian, parallel to Proposition \ref{prop-powerH} --- note in particular the case $\nu=1$, since boundedness of $\cL$ is obtained in spite of the current lack of a refined phase space analysis of the eigenprojections $Q_k$. 
	
	\begin{corollary}\label{cor-fracpow-L}
		For every $\nu \in \bR$ and $1 \le p \le \infty$, the operator $\cL^{\nu}$ defined in \eqref{eq-def-Lnu} is a bounded operator $M^p_{\max\{2\nu,0\}}(\rdd) \to M^p(\rdd)$. 
	\end{corollary}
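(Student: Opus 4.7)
The plan is to split into two cases according to the sign of $\nu$, in order to invoke the transference principle in its bounded form (Theorem \ref{thm-transf}) when possible and fall back on its extension (Remark \ref{rem-general-transf}) otherwise, with Proposition \ref{prop-powerH} providing the Hermite-side boundedness in each case.

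For $\nu \le 0$, the function $\lambda \mapsto \lambda^\nu$ is bounded on $\sigma(\cH)=d+2\bN$ (indeed by $d^\nu$), so $\cH^\nu = m(\cH)$ with $m \in \ell^\infty(d+2\bN)$, and $\cL^\nu=m(\cL)$ falls within the scope of Theorem \ref{thm-transf}. From Proposition \ref{prop-powerH} applied with $s=0$, the Shubin symbol of $\cH^\nu$ lies in $\Gamma^{2\nu}$ with $2\nu \le 0$, hence $\cH^\nu$ is bounded $M^p(\rd)\to M^p_{-2\nu}(\rd) \hookrightarrow M^p(\rd)$ for every $1\le p\le \infty$. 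Applying Theorem \ref{thm-transf} with $s=r=0$ then yields that $\cL^\nu$ is bounded on $M^p(\rdd)$, which is the claim since $\max\{2\nu,0\}=0$.

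For $\nu>0$, the multiplier $\lambda^\nu$ is unbounded and one must appeal to Remark \ref{rem-general-transf}. The preliminaries are already in place: $\cL^\nu$ is densely defined on $\cQ$, $(I\otimes\cH)^\nu$ is densely defined on $\cP$, and \eqref{eq-Aj-hermfun} gives $\cA_J^*(\cQ)=\cP$, so that by expanding on the Hermite basis one checks
\begin{equation}
	\cL^\nu = \cA_J\,(I\otimes \cH^\nu)\,\cA_J^* \quad \text{on } \cQ,
\end{equation}
as in \eqref{eq-mL-intertw}. Proposition \ref{prop-powerH} gives $\cH^\nu : M^p_{2\nu}(\rd)\to M^p(\rd)$, and the kernel-based argument of Theorem \ref{thm-transf}, adapted as indicated in Remark \ref{rem-general-transf}, transfers this into $I\otimes\cH^\nu : M^p_{2\nu}(\rdd)\to M^p(\rdd)$ (the weight $\langle\cdot\rangle^{2\nu}$ being controlled by $\langle(z_2,w_2)\rangle^{2\nu}$ in the tensor factorization of the Gabor matrix). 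Composing with the action of $\cA_J,\cA_J^*$ on $M^p_s(\rdd)$ (which are metaplectic, hence continuous in every weighted $M^p$ by Proposition \ref{prop-metap-mod}) yields the bound on $\cQ$, and a density argument extends it to all of $M^p_{2\nu}(\rdd)$ (or, for $p=\infty$, in the sense agreed upon in Section \ref{sec-prel}).

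The only genuinely delicate point is the unbounded case: one must justify that the formal intertwining $\cL^\nu = \cA_J(I\otimes\cH^\nu)\cA_J^*$ — a priori valid only on the algebraic span $\cQ$ — is compatible with the continuous extension produced by the transference argument, and that $(I\otimes \cH)^\nu$ and $I\otimes\cH^\nu$ agree where both make sense (a spectral-theoretic statement that can be verified directly on the orthonormal basis $\{\Phi_\alpha\otimes\Phi_\beta\}$). Once these compatibility issues are addressed, the two cases combine into the stated bound with weight exponent $\max\{2\nu,0\}$.
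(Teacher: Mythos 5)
Your proposal is correct and follows the paper's own argument essentially step for step: for $\nu\le 0$ you reduce to the bounded multiplier $\lambda\mapsto\lambda^\nu$ on $d+2\bN$, use Proposition \ref{prop-powerH} to bound $\cH^\nu$ on $M^p(\rd)$, and apply Theorem \ref{thm-transf} with $s=r=0$; for $\nu>0$ you pass to Remark \ref{rem-general-transf} with $T=\cH^\nu$, $U=\cA_J$, $V=\cA_J^*$, using the identification $(I\otimes\cH)^\nu=I\otimes\cH^\nu$ on $\cP$ and $\cA_J^*(\cQ)=\cP$ from \eqref{eq-Aj-hermfun}, exactly as the paper does. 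Your remark about $\lan(z_2,w_2)\ran^{2\nu}$ controlling the weight in the tensorized Gabor matrix correctly identifies the step in the proof of Theorem \ref{thm-transf} (the inequality $\lan(z_1,z_2,w_1,w_2)\ran^{-s}\le\lan(z_2,w_2)\ran^{-s}$ for $s\ge0$) that makes the transference with $s=2\nu>0$, $r=0$ go through.
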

	
	We conclude this section with some remarks on other spectral multipliers related to powers of the Hermite and the special Hermite operators. 
	\begin{remark}
		Consider the Riesz transforms for the harmonic oscillator, namely the operators $R_j(\cH) = A_j\cH^{-1/2}$, $j=1,\ldots,d$, where $A_j=-\partial_{x_j}+x_j$ is the so-called lowering operator. They were first studied in \cite{thangavelu_90} in connection with the wave equation for $\cH$. In \cite{bhimani_19} the authors prove a boundedness result for $R_j$ on $M^{p,q}(\rd)$ under constraints on $p,q$. In fact, $R_j (\cH)$ is bounded on $M^{p,q}_s(\rd)$ for any choice of $1 \le p,q \le \infty$ and $s\in \bR$, since $\cH^{-1/2}$ is a Weyl operator with symbol in $\Gamma^{-1}$, while $A_j$ is a Weyl operator with symbol in $\Gamma^1$. As a result, $R_j(\cH)$ has symbol in $\Gamma^0\subset S^0$, hence it is bounded on every modulation space. The same result obviously holds for $\cH^{-1/2}A_j$, $j=1,\ldots,d$. 
		
		\noindent By transference, the Riesz transforms for the twisted Laplacian $R_j(\cL) = A_j \cL^{-1/2}$ are bounded on $M^p(\rd)$ for all $1 \le p \le \infty$ and $j = 1,\ldots, d$ --- see \cite{nowak_17,thangavelu_93} for the analysis of these operators in the context of Lebesgue spaces. 
	\end{remark}
	
	\subsection{Oscillating multipliers for $\cL$}
	Let us consider now oscillating multipliers of the form
	\begin{equation}
		m(x) = x^{-\delta/2} e^{ix^\gamma/2}, \quad x,\gamma,\delta>0.
	\end{equation}
	In \cite{bhimani_19} the authors proved boundedness results for such spectral functions of the harmonic oscillator (and more general ones of Mikhlin-H\"ormander type) by exploiting the connection with Fourier multipliers on the torus. In particular, they obtained that $m(\cH)$ is bounded on $M^p(\rd)$, $1\le p < \infty$, provided that $\d/\g > d |1/p-1/2|$. 
	
	Such a boundedness result for oscillating multipliers for $\cH$ can be actually improved via pseudo-differential analysis. The main ingredient comes from the analysis of semilinear parabolic equations performed in \cite[Theorem 1.2]{nicola_14}, from which we isolate a special case of interest.
	
	\begin{lemma} \label{lem-nicola} Let $b$ be a real-valued symbol in $S^1$. Given $T>0$, the Schr\"odinger propagator $e^{itb^\w}$ is a pseudo-differential operator whose Weyl symbol belongs to a bounded set of $S^0$, uniformly with respect to $t \in [0,T]$.  
	\end{lemma}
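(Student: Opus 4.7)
The plan is to construct the Weyl symbol $\sigma(t)$ of $U(t):=e^{itb^\w}$ via a WKB-type parametrix expansion and then verify that $\sigma(t)$ stays in a bounded subset of $S^0$ uniformly for $t\in[0,T]$. Formally positing $U(t)=\sigma(t)^\w$ and differentiating in $t$, the Weyl calculus yields the Heisenberg-type evolution equation
\begin{equation}
\partial_t \sigma(t)=i\bigl(b\# \sigma(t)-\sigma(t)\# b\bigr),\qquad \sigma(0)\equiv 1,
\end{equation}
whose leading part is transport by the Poisson bracket $\{b,\sigma(t)\}$, with lower-order corrections coming from the asymptotic expansion of the Moyal bracket.

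Since $b\in S^1$ is real-valued, the Hamilton vector field $J\nabla b$ is locally Lipschitz with linear growth on $\rdd$ and therefore generates a global flow $\chi_t$. A Gronwall argument applied to the variational equation gives uniform bounds on $[0,T]$ for $\chi_t$ and all its phase-space derivatives compatible with the $S^m$-scale, so that transport along $\chi_t$ preserves each class $S^{-j}$ with $T$-dependent seminorm constants. Solving the principal transport $\partial_t\sigma_0=\{b,\sigma_0\}$ with $\sigma_0(0)=1$ yields $\sigma_0(t)\equiv 1\in S^0$. Corrections $\sigma_j(t)\in S^{-j}$, $j\ge 1$, are built iteratively by solving
\begin{equation}
\partial_t\sigma_j(t)-\{b,\sigma_j(t)\}=R_j(t),\qquad \sigma_j(0)=0,
\end{equation}
where $R_j(t)$ encodes the Moyal corrections involving $\sigma_0(t),\dots,\sigma_{j-1}(t)$ and is shown inductively to belong to a bounded subset of $S^{-j}$ on $[0,T]$. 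A Borel summation in the $S$-scale then produces a symbol $\tilde\sigma(t)\sim\sum_j\sigma_j(t)$ in a bounded subset of $S^0$, uniformly on $[0,T]$, such that $\tilde\sigma(t)^\w$ solves the Schr\"odinger equation associated with $b^\w$ up to a smoothing remainder.

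To conclude, set $R(t):=U(t)-\tilde\sigma(t)^\w$. Since $b$ is real-valued, $b^\w$ is essentially self-adjoint and $U(t)$ is unitary on $L^2(\rd)$; combined with $R(0)=0$ and the fact that $R(t)$ is forced by a smoothing operator, a standard Duhamel/energy argument forces $R(t)$ itself to be smoothing, with Weyl symbol in a bounded subset of $S^{-\infty}\subset S^0$ uniformly for $t\in[0,T]$. Adding this contribution to $\tilde\sigma(t)$ recovers the full Weyl symbol $\sigma(t)$ of $U(t)$ in a bounded subset of $S^0$ on $[0,T]$, as claimed. The main obstacle is the careful bookkeeping in the iterative step: one must verify that every additional Moyal correction genuinely gains a factor of $\langle z\rangle^{-1}$ despite the absence of derivative decay in $S^m$, and that this gain survives the action of the Hamilton flow on $[0,T]$. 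These technical points are precisely what is established in \cite[Theorem 1.2]{nicola_14}, to which we defer for the full proof.
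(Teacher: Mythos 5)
The paper does not prove Lemma~\ref{lem-nicola} at all: it explicitly \emph{isolates} it as a special case of \cite[Theorem~1.2]{nicola_14} and defers the entire proof to that reference, and the subsequent text (see the proof of Theorem~\ref{thm-hermite-oscmult}) only \emph{applies} the lemma. Your proposal attempts to reconstruct a proof and then defers the technical core to the same reference, but the sketch you give contains a genuine error that would make it unusable even as a roadmap.

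The central problem is the evolution equation. For $U(t)=e^{itb^\w}$ one has $\partial_t U(t)=ib^\w U(t)$, so if $U(t)=\sigma(t)^\w$ then $\partial_t\sigma(t)=i\,b\#\sigma(t)$ (equivalently $i\,\sigma(t)\#b$), \emph{not} the Heisenberg commutator form $\partial_t\sigma=i(b\#\sigma-\sigma\#b)$ you wrote down. In fact, since $\sigma(t)$ is a function of $b^\w$ under the Weyl calculus, one has $b\#\sigma(t)-\sigma(t)\#b=0$ identically; your equation therefore has $\sigma\equiv 1$ as its unique solution with $\sigma(0)=1$, i.e.\ it yields $e^{itb^\w}=I$, which is absurd unless $b=0$. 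This is not a minor sign slip: the transport-plus-corrections hierarchy you build afterwards rests entirely on the (incorrect) identification of the principal part as $\{b,\sigma\}$. With the correct equation the principal term is $ib\sigma$, whose formal solution $e^{itb}$ is not in $S^0$ when $|\nabla b|\sim\langle z\rangle$, so a different idea is needed from the start.

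A second, independent obstruction is that the iterative scheme in decreasing orders $S^{-j}$ is simply not available in the present symbol scale. By the paper's own definition, $S^m(\rdd)$ is the H\"ormander-type class with \emph{no} gain of decay under differentiation, so the Moyal corrections to $a\#b$ carry exactly the same order as the leading term and the ``asymptotic expansion'' has no hierarchy of decreasing orders. You acknowledge this (``one must verify that every additional Moyal correction genuinely gains a factor of $\langle z\rangle^{-1}$ despite the absence of derivative decay in $S^m$'') but then assert that this gain ``is precisely what is established in \cite[Theorem~1.2]{nicola_14}''. It is not: no such gain holds in these classes, and consequently \cite{nicola_14} does not argue via a WKB/parametrix expansion with Borel summation. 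The actual proof is of a different nature, based on characterizations of $S^0$ through almost-diagonalization of the Gabor matrix together with $L^2$ energy estimates and a commutator/Beals-type bootstrap, and it is precisely this route --- not the one you sketch --- that circumvents the lack of a symbolic hierarchy. The concluding Duhamel/energy step in your proposal would only deliver $L^2$-boundedness of the remainder, not that it is smoothing, so even granting the earlier steps the argument would not close as written.
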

	
	\begin{theorem}\label{thm-hermite-oscmult}
		Given $\gamma\le 1$ and $\delta\ge 0$, for $x>0$ and $t> 0$ consider the spectral multiplier
		\begin{equation}
			m_t(\cH) \coloneqq \cH^{-\delta/2}e^{it\cH^{\gamma/2}}.
		\end{equation} 
		The operator $m_t(\cH)$ is bounded $M^{p,q}_s(\rd)\to M^{p,q}_{s+\delta}(\rd)$ for all $1 \le p,q \le \infty$ and $s \in \bR$. Moreover, if $0< t \le T$, there exists $C(T)>0$ such that
		\begin{equation}
			\| m_t(\cH) f \|_{M^{p,q}_{s+\d}} \le C(T) \| f \|_{M^{p,q}_s}. 
		\end{equation}
	\end{theorem}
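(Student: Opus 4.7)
The plan is to factor $m_t(\cH)$ as a composition and to analyze the two factors separately using the pseudo-differential tools already at our disposal, then combine them.

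First, I would deal with $\cH^{-\delta/2}$. By Proposition~\ref{prop-powerH} applied with $\nu=-\delta/2$, this operator has Weyl symbol in $\Gamma^{-\delta}\subset S^{-\delta}$, and it extends (by Proposition~\ref{prop-weyl-sm}) to a bounded map $M^{p,q}_{s}(\rd)\to M^{p,q}_{s+\delta}(\rd)$ for every $1\le p,q\le\infty$ and $s\in\bR$. The operator norm is controlled by a finite number of seminorms of the Shubin symbol of $\cH^{-\delta/2}$ and is in particular independent of $t$.

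The core of the argument is to show that $e^{it\cH^{\gamma/2}}$ is uniformly bounded on every weighted modulation space for $t\in[0,T]$. Here I would set $b$ to be the Weyl symbol of $\cH^{\gamma/2}$. By Proposition~\ref{prop-powerH} with $\nu=\gamma/2$, we have $b\in\Gamma^{\gamma}(\rdd)\subset S^{\gamma}(\rdd)\subset S^{1}(\rdd)$, the last inclusion being valid because $\gamma\le 1$ implies $\lan z\ran^{\gamma}\le\lan z\ran$. Moreover, $\cH^{\gamma/2}$ is self-adjoint, so $b$ is real-valued. Therefore Lemma~\ref{lem-nicola} applies: the Schr\"odinger propagator $e^{it\cH^{\gamma/2}}=e^{itb^{\w}}$ is a Weyl operator whose symbol lies in a bounded subset of $S^{0}(\rdd)$ as $t$ ranges over $[0,T]$. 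Invoking Proposition~\ref{prop-weyl-sm} once more (with $m=0$), this operator is bounded on $M^{p,q}_{s}(\rd)$ for all $1\le p,q\le\infty$ and $s\in\bR$, with operator norm dominated by a constant $C(T)>0$ that is uniform in $t\in[0,T]$.

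Finally, since $\cH^{-\delta/2}$ and $e^{it\cH^{\gamma/2}}$ commute as functions of the self-adjoint operator $\cH$, the composition $m_t(\cH)=\cH^{-\delta/2}e^{it\cH^{\gamma/2}}$ is a well-defined operator on $\cS(\rd)$, and for $f\in\cS(\rd)$ and $0<t\le T$,
\begin{equation}
\|m_{t}(\cH)f\|_{M^{p,q}_{s+\delta}}
\le \|\cH^{-\delta/2}\|_{M^{p,q}_{s}\to M^{p,q}_{s+\delta}}\,\|e^{it\cH^{\gamma/2}}f\|_{M^{p,q}_{s}}
\le C(T)\,\|f\|_{M^{p,q}_{s}}.
\end{equation}

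The only step that requires care is ensuring that Lemma~\ref{lem-nicola} is genuinely applicable to $b$; the constraint $\gamma\le 1$ is exactly what places $b$ in $S^{1}$ (rather than merely in $S^{\gamma}$ with $\gamma>1$, where the propagator symbol estimates of \cite{nicola_14} are no longer available). The self-adjointness of $\cH^{\gamma/2}$ together with the inclusion $\Gamma^{\gamma}\subset S^{1}$ removes all remaining obstructions, and no delicate regularization of the spectral series in \eqref{eq:eigenexp-def} is needed because the pseudo-differential representation already defines the operator on all of $\cS(\rd)$.
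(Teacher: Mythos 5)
Your proof is correct and follows essentially the same route as the paper: factor $m_t(\cH)=\cH^{-\delta/2}\circ e^{it\cH^{\gamma/2}}$, handle $\cH^{-\delta/2}$ via Proposition~\ref{prop-powerH} and Proposition~\ref{prop-weyl-sm}, and handle the fractional Schr\"odinger flow by noting that the Weyl symbol of $\cH^{\gamma/2}$ lies in $\Gamma^{\gamma}\subset S^1$ so that Lemma~\ref{lem-nicola} gives a bounded family of $S^0$ symbols for $t\in[0,T]$, hence uniform boundedness on $M^{p,q}_s$ by Proposition~\ref{prop-weyl-sm}. The one place where you are slightly more careful than the paper is in explicitly justifying that the Weyl symbol of $\cH^{\gamma/2}$ is real-valued (via self-adjointness), which is indeed a hypothesis of Lemma~\ref{lem-nicola} that the paper leaves implicit.
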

	\begin{proof}
		In light of Proposition \ref{prop-powerH}, $\cH^{-\d/2}$ is bounded $M^{p,q}_s(\rd) \to M^{p,q}_{s+\d}(\rd)$. It is then enough to establish the boundedness on $M^{p,q}_s(\rd)$ of the fractional Schr\"odinger flow $e^{it\cH^{\gamma/2}}$. To this aim, Proposition \ref{prop-powerH} shows that $\cH^{\gamma/2}$ is a Weyl operator with symbol in $\Gamma^\gamma \subseteq \Gamma^1 \subset S^1$, so that the assumptions of Lemma \ref{lem-nicola} are satisfied. Boundedness on every modulation space $M^{p,q}_s$ then follows from Proposition \ref{prop-weyl-sm}, along with information about the structure of the operator norm. 
	\end{proof} 
	
	\begin{remark}
		Continuity results for oscillating Hermite  multipliers on Lebesgue and Hardy spaces were proved for instance in \cite{thangavelu_87}, and in \cite{chen} for general Schr\"odinger flows associated with operators whose kernel satisfies a pointwise upper bound of Gaussian type. More recently, $L^p$ bounds for oscillating Hermite multipliers were obtained in \cite{bui_23} under the constraint $\d/\g \ge d|1/p-1/2|$. 
	\end{remark}
	
	Boundedness on modulation spaces of oscillating multipliers for the twisted Laplacian is thus obtained via transference (Theorem \ref{thm-transf}).    
	
	\begin{corollary}\label{cor-oscmult-L}
		Given $\gamma\le 1$ and $\delta\ge 0$, for $x>0$ and $t> 0$ consider the spectral multiplier 
		\begin{equation}
			m_t(\cL) \coloneqq \cL^{-\delta/2}e^{it\cL^{\gamma/2}}.
		\end{equation} 
		The operator $m_t(\cL)$ is bounded on $M^p(\rdd)$ for all $1 \le p \le \infty$. Moreover, if $0<t \le T$, there exists $C(T)>0$ such that
		\begin{equation}
			\| m_t(\cL) f \|_{M^p_{}} \le C(T) \| f \|_{M^p}. 
		\end{equation}
	\end{corollary}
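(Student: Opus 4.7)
The plan is to derive the corollary as a direct application of the transference principle (Theorem~\ref{thm-transf}) to the Hermite counterpart established in Theorem~\ref{thm-hermite-oscmult}, so the proof reduces to verifying that the hypotheses of the former are satisfied with the right choice of weight indices.

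First I would observe that $m_t(x)=x^{-\delta/2}e^{itx^{\gamma/2}}$ is bounded on the spectrum $\sigma(\cL)=\sigma(\cH)=d+2\bN$ whenever $\delta\ge 0$, since $|m_t(x)|\le d^{-\delta/2}$ for every $x\ge d$. Thus $m_t\in L^\infty(d+2\bN)$ and both spectral multipliers $m_t(\cL)$ and $m_t(\cH)$ are well defined via the functional calculus reviewed in Section~\ref{sec-hermite-laguerre}. Next I would specialize Theorem~\ref{thm-hermite-oscmult} to $s=0$, which yields the boundedness
\[
    m_t(\cH)\colon M^{p,q}(\rd)\longrightarrow M^{p,q}_\delta(\rd)
\]
for every $1\le p,q\le\infty$, with operator norm controlled by a constant $C(T)$ uniformly in $t\in(0,T]$. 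Taking in particular $(p,q)=(1,1)$ and $(p,q)=(\infty,\infty)$ gives the two one-sided weighted endpoint estimates
\[
    \|m_t(\cH)\|_{M^1\to M^1_\delta}+\|m_t(\cH)\|_{M^\infty\to M^\infty_\delta}\le 2C(T).
\]

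Finally I would apply Theorem~\ref{thm-transf} with the choice $s=0$ and $r=\delta$, which is legitimate because $\delta\ge 0$. The conclusion is that $m_t(\cL)$ extends to a bounded operator on $M^p(\rdd)$ for every $1\le p\le\infty$, with
\[
    \|m_t(\cL)\|_{M^p\to M^p}\le C\,\|m_t(\cH)\|^{1/p}_{M^1\to M^1_\delta}\|m_t(\cH)\|^{1/p'}_{M^\infty\to M^\infty_\delta}\le C'(T),
\]
which is precisely the content of the corollary.

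There is no genuine obstacle, since the two ingredients are already in place; the only subtlety worth flagging is the compatibility of the weight indices. The Hermite estimate produces a gain of $\delta$ units of weight on the target side, while the transference principle allows arbitrary non-negative weights $s,r$ on either side of $m(\cH)$ but forces the resulting $\cL$-estimate to be unweighted on the target. Since $r=\delta\ge 0$ matches the transference hypothesis exactly, everything aligns and the argument reduces to a short citation sequence.
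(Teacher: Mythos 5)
Your proposal is correct and follows exactly the route the paper intends: the corollary is presented in the paper without a separate displayed proof, with the preceding sentence stating that boundedness "is thus obtained via transference (Theorem~\ref{thm-transf})" applied to Theorem~\ref{thm-hermite-oscmult}. Your specialization of Theorem~\ref{thm-hermite-oscmult} to $s=0$ (giving $m_t(\cH)\colon M^{p,q}\to M^{p,q}_{\delta}$ with $C(T)$ uniform for $t\in(0,T]$) and subsequent application of Theorem~\ref{thm-transf} with $s=0$, $r=\delta\ge 0$ is precisely the paper's argument, and your remark on the compatibility of the weight indices correctly identifies the only point that needs checking.
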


	\section{The heat semigroup for the twisted Laplacian}\label{sec-heat}
	Let us consider now the heat semigroup associated with the twisted Laplacian, namely $e^{-t\cL}$ with $t>0$. By the spectral calculus this is defined by
	\begin{equation}\label{}
		e^{-t\cL}f = \sum_{k = 0}^\infty e^{-(2k+d)t}Q_k f, \qquad f \in L^2(\rdd). 		
	\end{equation}
	It is not surprising that the heat semigroup inherits a twisted convolution structure from \eqref{eq-Qk-def} --- see also \cite{thangavelu_93,wong_03}. Precisely, by linearity we have
	\begin{equation}\label{eq-def-pt}
		e^{-t\cL}f = f \times p_t, \qquad p_t(z) \coloneqq (8\pi)^{-d}\sum_{k = 0}^\infty e^{-(2k+d)t}\vp_k(z).
	\end{equation}
	The generating function identity \eqref{eq-laguerre-genfun} can be called into play to obtain an explicit form for the twisted heat propagator:
	\begin{align*}
		p_t(z) & = (8\pi)^{-d} e^{-dt} \sum_{k=0}^\infty (e^{-2t})^k L_k^{d-1}\Big( \frac{|z|^2}{2}\Big) e^{-\frac{|z|^2}{4}} \\ 
		& = (8\pi)^{-d} e^{-dt} (1-e^{-2t})^{-d}\exp\Big( - \frac{1+e^{-2t}}{1-e^{-2t}} \frac{|z|^2}{4}\Big) \\
		& = (16\pi \sinh t)^{-d} \exp\Big(-\frac 1 4 \coth(t)|z|^2 \Big).
	\end{align*}
	
	In order to prove bounds for $e^{-t\cL}f=f\times p_t$ on modulation spaces, in light of Remark \ref{rem-modwp} we first perform a time-frequency analysis of the twisted propagator. 
	\begin{lemma}
		For $1\le p,q \le \infty$ and $t>0$ we have
		\begin{equation}\label{eq-pt-wpq}
			\| p_t \|_{W^{p,q}} \le C e^{-td}(1+\coth t)^{d/p} (1+\tanh t)^{d/q},
		\end{equation} for a suitable constant $C>0$ that depends only on $d,p,q$. 
	\end{lemma}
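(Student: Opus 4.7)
The key observation is that $p_t$ is an explicit Gaussian on $\rdd$, so its Wiener amalgam norm can be computed essentially in closed form by choosing a Gaussian window. Write $p_t(z)=c_t\,e^{-\alpha_t|z|^2}$ with $c_t=(16\pi\sinh t)^{-d}$ and $\alpha_t=\tfrac14\coth t$. Since the $W^{p,q}$ norm is independent (up to equivalence) of the choice of a nonzero Schwartz window, I take $g(w)=e^{-\beta|w|^2}$ on $\rdd$ for a convenient $\beta>0$ (for instance $\beta=\tfrac14$), and I will work with $|\cV_g p_t|$ in the symplectic convention appropriate to even dimension.

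The first step is to evaluate $\cV_g p_t$ explicitly. Writing
\begin{equation}
\cV_g p_t(z,\zeta)=\pi^{-d}c_t\irdd e^{-\alpha_t|w|^2-\beta|w-z|^2-2i\sigma(\zeta,w)}\dd{w},
\end{equation}
completing the square in $w$ and using the Gaussian identity $\int e^{-\gamma|u|^2-2i\sigma(\zeta,u)}\dd u=(\pi/\gamma)^d e^{-|\zeta|^2/\gamma}$ (since $|J\zeta|=|\zeta|$) yields
\begin{equation}
|\cV_g p_t(z,\zeta)|=c_t(\alpha_t+\beta)^{-d}\exp\Big(-\tfrac{\alpha_t\beta}{\alpha_t+\beta}|z|^2-\tfrac{1}{\alpha_t+\beta}|\zeta|^2\Big).
\end{equation}
Thus $|\cV_g p_t|$ is a tensor product of two isotropic Gaussians, one in $z$ and one in $\zeta$.

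In the second step I plug this into the symplectic $W^{p,q}$ norm. The mixed integral factors, and two standard Gaussian integrations (first in $\zeta$, then in $z$) give
\begin{equation}
\|p_t\|_{W^{p,q}}\asymp c_t\,(\alpha_t+\beta)^{d(1/p+1/q-1)}\alpha_t^{-d/q},
\end{equation}
where the implicit constant depends only on $d,p,q,\beta$. As a sanity check, the case $p=q=2$ recovers $\|p_t\|_{L^2}\asymp c_t\alpha_t^{-d/2}$, consistent with $M^2=W^2=L^2$.

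The last step is algebraic simplification. Setting $u=e^{-2t}\in(0,1)$, one has $\alpha_t=(1+u)/(4(1-u))$ and $c_t=(8\pi)^{-d}(1-u)^{-d}e^{-td}$; with the choice $\beta=\tfrac14$ we get $\alpha_t+\beta=1/(2(1-u))$. Collecting the exponents of $(1-u)$ and $(1+u)$ in the product above (the exponent of $(1-u)$ telescopes to $-d/p$, that of $(1+u)$ to $-d/q$) produces
\begin{equation}
\|p_t\|_{W^{p,q}}\lesssim e^{-td}(1-u)^{-d/p}(1+u)^{-d/q}.
\end{equation}
Since $1+\coth t=2/(1-u)$ and $1+\tanh t=2/(1+u)$, this is exactly the claimed bound up to a constant.

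The steps are all elementary; the only real friction is the bookkeeping of exponents in the final simplification and the observation that one should use a Gaussian window (rather than an arbitrary Schwartz window) in order to keep $\cV_g p_t$ explicit. Once this choice is made the computation is just Gaussian calculus.
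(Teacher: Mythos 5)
Your proof is correct and is essentially the same argument as the paper's: both reduce the computation to an explicit Gaussian integral by choosing a Gaussian window, compute $|\cV_g p_t|$ in closed form, take the mixed Lebesgue norm, and simplify. The only cosmetic difference is your choice of window parameter $\beta=\tfrac14$ (which makes $\alpha_t+\beta=1/(2(1-u))$ telescope nicely), whereas the paper works with $g(z)=e^{-|z|^2}$ and the family of dilates $g_\lambda(z)=e^{-\lambda|z|^2}$, setting $\lambda=\coth(t)/4$ at the end.
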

	\begin{proof}
		Set $g(z)= e^{-|z|^2}$, $z \in \rdd$, and for $\lambda >0$ consider $g_{\lambda}(z) \coloneqq g (\sqrt{\lambda} z)=e^{-\lambda |z|^2}$. It is then clear that 
		\[ p_t (z) = (16 \pi \sinh t)^{-d} g_{\frac{\coth t}{4}}(z). \]
		As a result, the problem boils down to obtaining bounds for the amalgam norms of dilated Gaussian functions. Straightforward computations yield
		\begin{equation}
			|\cV_g g_{\lambda} (z,\zeta)| = (1+\lambda)^{-d}e^{ -\frac{\lambda}{1+\lambda}|z|^2} e^{-\frac{4}{1+\lambda}|\zeta|^2}, \qquad (z,\zeta) \in \rdq,
		\end{equation} and taking mixed Lebesgue norms gives
		\begin{equation}
			\| g_{\lambda} \|_{W^{p,q}} = C_{d,p,q} \lambda^{-d/q} (1+\lambda)^{d(1/q+1/p-1)},
		\end{equation} where $C_{d,p,q}=4^{-d/p} \pi^{d(1/p+1/q)} p^{-d/p}q^{-d/q}$. We finally obtain the claim with elementary bounds after setting $\lambda= (\coth t) /4$. 
	\end{proof}
	
	Boundedness of $e^{-t\cL}$ on modulation and spaces holds as detailed below. In the statement, we agree that $X^{p,q}$ denotes either $M^{p,q}$ or $W^{p,q}$. 
	\begin{theorem} \label{maint-heat1}
		Consider $1 \le p_1,p_2,q_1,q_2 \le \infty$. The special Hermite heat semigroup $e^{-t\cL}$, $t>0$, is a continuous map $\X1(\rdd) \to \X2(\rdd)$ if and only if $q_2 \ge q_1$. In such case, the following bound holds:
		\begin{equation}\label{eq-maint1}
			\| e^{-t\cL}f \|_{\X2} \le C(t) \| f \|_{\X1}, 
		\end{equation} where
		\begin{equation}\label{eq-ct}
			C(t) \coloneqq \begin{cases}
				C e^{-td} & (t\ge 1) \\ C t^{-d/\tilde{p}} & (0<t\le 1), 
			\end{cases} \qquad \frac{1}{\tilde{p}} \coloneqq \max\Big\{ \frac{1}{p_2}-\frac{1}{p_1},0 \Big\}, 
		\end{equation} and $C>0$ is a constant that does not depend on $t$ or $f$. 
	\end{theorem}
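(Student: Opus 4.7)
The plan is to leverage the twisted convolution representation $e^{-t\cL}f = f \times p_t$ from \eqref{eq-def-pt}, combine it with the Weyl-product continuity theory transported to twisted convolutions in Remark \ref{rem-modwp}, and plug in the amalgam bound \eqref{eq-pt-wpq} on the kernel to read off the precise time dependence. The positive direction is a bilinear Young-type estimate, while the sharpness in $q$ is a direct consequence of Proposition \ref{prop-sharpiff}.

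For sufficiency, Remark \ref{rem-modwp} provides, under the constraints \eqref{eq-condsharpcont}, the bilinear estimates
\begin{equation}
\|a \times b\|_{M^{p_2,q_2}} \lesssim \|a\|_{M^{p_1,q_1}}\,\|b\|_{W^{p_0,q_0}}, \qquad \|a \times b\|_{W^{p_2,q_2}} \lesssim \|a\|_{W^{p_1,q_1}}\,\|b\|_{W^{p_0,q_0}},
\end{equation}
which cover the $M \to M$ and $W \to W$ cases simultaneously upon taking $a = f$ and $b = p_t$. Inserting \eqref{eq-pt-wpq} yields
\begin{equation}
\|e^{-t\cL}f\|_{X^{p_2,q_2}} \lesssim e^{-td}(1+\coth t)^{d/p_0}(1+\tanh t)^{d/q_0}\,\|f\|_{X^{p_1,q_1}},
\end{equation}
with $(p_0,q_0)$ auxiliary parameters to be optimized. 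Among the constraints in \eqref{eq-condsharpcont}, only $1/p_2 \le 1/p_1 + 1/p_0$ involves $p_0$, forcing $1/p_0 \ge 1/\tilde p$, so I would saturate with $1/p_0 = 1/\tilde p$. The remaining constraints involving $q_0$, notably $q_0 \le q_2$ and $1/q_0 + 1/q_1 \ge 1$, are compatible with a sufficiently small finite $q_0$ precisely because of the standing assumption $q_2 \ge q_1$. Since $\tanh t \in [0,1)$ for $t>0$, the $q_0$-factor is absorbed in the constant; meanwhile $\coth t \asymp t^{-1}$ as $t \to 0^+$ and $\coth t \to 1$ as $t \to \infty$, producing exactly the dichotomy \eqref{eq-ct}: decay $e^{-td}$ for $t \ge 1$ and the loss $t^{-d/\tilde p}$ for $0 < t \le 1$.

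For necessity, suppose $q_2 < q_1$. Being a Gaussian, $p_t$ lies in $\cS(\rdd) \subset M^1(\rdd)$ and is nonzero, so the last sentence of Remark \ref{rem-modwp}—a direct application of Proposition \ref{prop-sharpiff}—yields that $f \mapsto f \times p_t$ fails to be continuous $M^{p_1,q_1}(\rdd) \to M^{p_2,q_2}(\rdd)$, which rules out the $M \to M$ case. The $W \to W$ case reduces to this via \eqref{eq-modwiesympfou} and \eqref{eq-symfou-intertw}, together with the radial invariance $p_t^\vee = p_t$: namely $\|f \times p_t\|_{W^{p_2,q_2}} = \|f_\sigma \times p_t\|_{M^{p_2,q_2}}$ and $\|f\|_{W^{p_1,q_1}} = \|f_\sigma\|_{M^{p_1,q_1}}$, so a hypothetical $W \to W$ bound would transfer to an $M \to M$ bound, contradicting the first step. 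I expect the main technical obstacle to be the bookkeeping needed to verify that the coupled system \eqref{eq-condsharpcont} always admits a pair $(p_0,q_0)$ realizing $1/p_0 = 1/\tilde p$ under the sole hypothesis $q_2 \ge q_1$: the constraints couple all six indices, and one has to distinguish whether $q_1=1$ or $q_1>1$ when choosing $q_0$ (roughly $q_0=1$ or $q_0=q_1'$), relying on the monotonicity of $W^{p_0,q_0}$ in $q_0$ to conclude in the remaining borderline cases.
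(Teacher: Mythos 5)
Your proof follows the same route as the paper: both decompose $e^{-t\cL}f = f\times p_t$, invoke the twisted-convolution version of Proposition \ref{prop-cwt} (Remark \ref{rem-modwp}) with auxiliary indices $(p_0,q_0)$, plug in the kernel bound \eqref{eq-pt-wpq}, and appeal to Proposition \ref{prop-sharpiff} for the sharpness in $q$. Two small remarks. First, your worry about case-splitting on $q_1$ is unnecessary: the choice $q_0=1$ always works, since $1/q_0 = 1 \ge 1 + 1/q_2 - 1/q_1$ is equivalent to $q_2 \ge q_1$, which is the standing hypothesis; with $1/p_0=1/\tilde p$ and $q_0=1$ all the constraints in \eqref{eq-condsharpcont} reduce to $\max\{\cdot\}=\min\{\cdot\}=0$ and hold without further distinctions. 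Second, you are actually a bit more careful than the paper on the $W^{p,q}\to W^{p,q}$ necessity, which the paper handles by a brief citation; your reduction via $\cF_\sigma$, the identity $\cF_\sigma(f\times p_t)=f_\sigma\times p_t^\vee$ and the radial symmetry $p_t^\vee=p_t$ is exactly the missing chain of equalities (with $=$ read as $\asymp$ for the norms) and makes the argument self-contained.
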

	
	\begin{proof}
		Given $p_1,q_1,p_2,q_2$ with $q_1\le q_2$, let $p_0,q_0 \in [1,\infty]$ be indices such that
		\[ 	\frac{1}{p_0} = \frac{1}{\tilde{p}} = \max\Big\{ \frac{1}{p_2} - \frac{1}{p_1},0 \Big\}, \qquad	q_2\ge q_0, \qquad \frac{1}{q_0} \ge 1 + \frac{1}{q_2}-\frac{1}{q_1}. \]
		Therefore, in light of Proposition \ref{prop-cwt} and Remark \ref{rem-modwp}, we have
		\begin{equation*}
			\| e^{-t\cL}f \|_{\X2} = \| f \times p_t \|_{\X2} \le C \| f \|_{\X1}  \|p_t\|_{\W0},
		\end{equation*} the condition $q_1 \le q_2$ being also necessary in view of Proposition \ref{prop-sharpiff}. We thus resort to \eqref{eq-pt-wpq} with $p=p_0$ and $q=q_0$, and elementary bounds imply that $\|p_t\|_{\W0} \lesssim e^{-td}$ if $t \ge 1$, while $\|p_t\|_{\W0} \lesssim t^{-d/p_0}$ if $0< t \le 1$.
	\end{proof}
	
	\begin{remark}\label{rem-wfs} In passing, we highlight some aspects of the microlocal analysis of the heat flow $e^{-t\cL}$. To this aim, we recall the notion of \emph{Gabor wave front set} introduced by H\"ormander in \cite{horm-quad-91}, then recently rediscovered and further developed in \cite{rodino_14} --- see also \cite{rodino_21} for a plain introduction with some historical notes. Roughly speaking, the Gabor wave front set $WF(u)\subseteq \bR^{2n}\smo$ detects the directions in phase space along which a temperate distribution $u \in \cS'(\bR^n)$ lacks of Schwartz regularity, as measured by the decay of the Gabor transform over a cone. To be precise, we have that $z_0 \notin WF(u)$ if there exists an open conic subset $\Gamma_{z_0} \subseteq \bR^{2n} \smo$ such that $z_0 \in \Gamma_{z_0}$ and, for some (in fact any) $g \in \cS(\bR^n)\smo$,
		\begin{equation}\label{eq-gabor-wf}
			\sup_{z \in \Gamma_{z_0}} |V_g u (z)| < \infty, \quad \forall N \in \bN. 
		\end{equation}
		The heat flow $e^{-t\cL}$ is a pseudo-differential operator with Weyl symbol (cf.\ \cite[Theorem 4.2]{horm-mehler-95})
		\begin{equation}
			e^{-t\cL} = \Theta_t^\w, \qquad \Theta_t(z,\zeta) \coloneqq (8\pi^2 \cosh t)^{-d}e^{-(\tanh t)|\zeta-Jz/2|^2}, \quad (z,\zeta) \in \rdq. 
		\end{equation}
		The symbol $\Theta_t$ belongs to the H\"ormander class $S^0(\rdq)$. A property that distinguishes the Gabor wave front set from other similar notions is precisely the microlocalization of this ``tough'' symbol class, in the sense of the following inclusion: 
		\begin{equation}
			WF(e^{-t\cL}u) = WF(\Theta_t^\w u) \subseteq WF(u), \qquad u \in \cS'(\rdd), \quad t >0. 
		\end{equation}
		This result can be further refined if one takes into account the notion of \emph{singular space}, introduced in \cite{hitrik,pravda_11} to investigate the hypoelliptic features of non-elliptic operators. The singular space associated with $e^{-tL^\w}$ (cf.\ \eqref{eq-symbol-L} and \eqref{eq-bL}) can be readily determined:  
		\begin{equation}
			S_\cL \coloneqq \ker \bL = \{ (z,Jz/2) : z \in \rdd \}. 
		\end{equation}
		Then, \cite[Theorem 6.2]{pravda_18} shows that the phase space singularities that fall outside the singular space are suppressed by the diffusion flow, as a consequence of the following inclusion of Gabor wave front sets:
		\begin{equation}
			WF(e^{-t\cL}u) \subseteq (WF(u) \cap S_{\cL})\smo, \quad u \in \cS'(\rdd), \, t>0. 
		\end{equation}  
		It is thus clear that the heat flow $e^{-t\cL}$ regularizes every initial datum whose Gabor wave front set is disjoint from the singular space $S_\cL$ --- this happens for instance if $u=\delta$ or $u=1$, since $WF(\delta) = \{0\}\times (\rdd \smo)$ and $WF(1) = (\rdd \smo) \times \{0\}$. It would be interesting to determine a distribution $v$ (if any) such that $WF(v)=\cS_L$ --- see for instance \cite[Theorem 6.1]{schulz} in this connection. 
	\end{remark}
	
	\subsection{The fractional heat semigroup}
	Let us now discuss boundedness results for the fractional heat multiplier $e^{-t\cL^\nu}$ with $0<\nu<1$ and $t>0$. The functional calculus for $\cL$ leads us to consider
	\begin{equation}\label{eq-def-fractheatL}
		e^{-t\cL^\nu} f = \sum_{k=0}^{\infty} e^{-t(d+2k)^\nu} Q_k f = f \times \Big((8\pi)^{-d} \sum_{k=0}^{\infty} e^{-t(d+2k)^\nu} \vp_k \Big), \qquad f \in L^2(\rdd). 
	\end{equation} Except for the case $\nu=1$ already treated, it seems not possible to obtain an explicit, closed form for the twisted kernel $p_t^{(\nu)}$ associated with $e^{-t\cL^\nu}$ --- namely, the function such that $e^{-t\cL^\nu}= f \times p_t^{(\nu)}$. 
	
	An alternative representation for the fractional heat semigroup can be obtained via subordination. Let $\eta_t \geq 0$ be the density function of the distribution of the $\nu$-stable subordinator at time $t$ --- see e.g., \cite{bogdan_09} for further details. Then, by construction, $\eta_t(s)=0$ for $s\leq 0$ and we have the identity
	\begin{equation}\label{eq-subord-prop}
		\irp e^{-us} \, \eta_t(s) \, \dd{s}=e^{-tu^{\nu}}, \qquad \forall \, u \ge 0.
	\end{equation}
	The fractional twisted Laplacian semigroup $e^{-t\cL^{\nu}}$ is thus subordinated to the twisted heat semigroup via the following pointwise identity. 
	\begin{proposition}\label{prop-gamma-fract} For all $0<\nu<1$ and $f \in \cS(\rdd)$, 
		\begin{equation}\label{eq-sub-fractheatL}
			e^{-t\cL^{\nu}}f(z)=\irp (e^{-sL} f(z)) \, \eta_t(s) \dd{s}, \qquad z \in \rdd. 
		\end{equation} 
	\end{proposition}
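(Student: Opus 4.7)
The plan is to reduce the subordination identity to the scalar one in \eqref{eq-subord-prop} by applying the spectral decomposition of $\cL$ term by term and then swapping sum and integral by Fubini--Tonelli. The key observation is that both sides of \eqref{eq-sub-fractheatL} are defined through the functional calculus, and that the subordination formula $\int_0^\infty e^{-us}\eta_t(s)\,ds = e^{-tu^\nu}$ applied with $u = d+2k$ converts the eigenvalues of $e^{-s\cL}$ into those of $e^{-t\cL^\nu}$.

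First I would fix $f \in \cS(\rdd)$ and expand $f = \sum_{\a,\b \in \bN^d}\lan f, \Phi_{\a,\b}\ran \Phi_{\a,\b}$, grouping by $\cL$-eigenspaces as $f = \sum_{k\in\bN} Q_k f$. Since $f$ is Schwartz and $\Phi_{\a,\b}$ is an eigenfunction of $\cH$ on $\rdd$ with eigenvalue $d+|\a|+|\b|$, the coefficients $\lan f,\Phi_{\a,\b}\ran$ decay faster than any polynomial in $|\a|+|\b|$, and combined with at most polynomial pointwise growth of the special Hermite functions this yields, for every $z \in \rdd$, the absolutely convergent pointwise representation
\begin{equation}
	e^{-s\cL}f(z) = \sum_{k=0}^\infty e^{-s(d+2k)}(Q_k f)(z), \qquad s \ge 0,
\end{equation}
with the tail bounded uniformly in $s \ge 0$ by a summable sequence independent of $s$.

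Next I would insert this expansion into the right-hand side of \eqref{eq-sub-fractheatL} and apply Fubini--Tonelli: since $\eta_t \ge 0$ is a probability density, $|e^{-s(d+2k)}| \le 1$, and the sequence $\{|(Q_k f)(z)|\}_k$ has fast decay, the double integral/sum is absolutely convergent. Exchanging the order of summation and integration and invoking \eqref{eq-subord-prop} with $u = d+2k$ yields
\begin{equation}
	\irp (e^{-s\cL}f(z))\eta_t(s)\,ds = \sum_{k=0}^\infty (Q_k f)(z)\irp e^{-s(d+2k)}\eta_t(s)\,ds = \sum_{k=0}^\infty e^{-t(d+2k)^\nu}(Q_k f)(z),
\end{equation}
which is precisely the spectral expansion defining $e^{-t\cL^\nu}f(z)$, according to \eqref{eq-def-fractheatL}.

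The only delicate point is the justification of the exchange of sum and integral, but this is entirely routine for Schwartz $f$ because of the rapid decay of Hermite coefficients; no quantitative estimate on $\eta_t$ beyond non-negativity and unit mass is needed. If one prefers to avoid the pointwise expansion, the same argument can be phrased in $L^2(\rdd)$ using Parseval on the orthonormal basis $\{\Phi_{\a,\b}\}$, with the pointwise identity then recovered by noting that $e^{-s\cL}f$ and $e^{-t\cL^\nu}f$ lie in $\cS(\rdd)$ for $f \in \cS(\rdd)$ and applying the bounded point evaluations to both sides.
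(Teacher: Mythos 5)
Your proof is correct and follows essentially the same route as the paper's: expand $f$ in special Hermite functions, use the rapid decay of the coefficients $\lan f,\Phi_{\a,\b}\ran$ (obtained by pairing with powers of the harmonic oscillator on $\rdd$, whose eigenfunctions are also the $\Phi_{\a,\b}$) to justify exchanging the integral in $s$ with the spectral sum, and then invoke the scalar subordination identity \eqref{eq-subord-prop} with $u=d+2k$. The only cosmetic difference is that the paper uses the uniform bound $\|\Phi_{\a,\b}\|_{L^\infty}\lesssim 1$ rather than your weaker "polynomial pointwise growth," but both suffice once combined with the super-polynomial decay of the coefficients.
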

	\begin{proof}
		Let us expand $f$ with respect to special Hermite functions, so that $f = \sum_{\a,\b} c_{\a,\b} \Phi_{\a,\b}$, where $c_{\a,\b} = \lan f, \Phi_{\a,\b} \ran$ and $\a,\b \in \bN^d$. Such an expansion converges uniformly and absolutely to $f$ in $\rdd$, as a consequence of the fact that $\|\Phi_{\a,\b}\|_{L^\infty} = |\Phi_{\a,\b}(0)|\le 1$ and, for all $n \in \bN$,
		\begin{equation}
			|c_{\a,\b}| = |\lan f,\Phi_{\a,\b}\ran| = \frac{|\lan \cH^n f,\Phi_{\a,\b} \ran|}{(d+|\a|+|\b|)^n} \le \frac{\|\cH^n f\|_{L^2}}{(d+|\a|+|\b|)^n},
		\end{equation} where we used that $\cH$ is a symmetric operator --- it is then enough to choose $n$ sufficiently large to ensure convergence. The same argument yields the uniform convergence in $\rdd$ of any $L^\infty$ multiplier expansion 
		\begin{equation}
			m(\cL)f = \sum_{\a,\b \in \bN^d} m(d+2|\b|)  c_{\a,\b} \Phi_{\a,\b},\qquad f \in \cS(\rdd).
		\end{equation}
		On this basis, resorting to the subordination identity \eqref{eq-subord-prop} we have, for all $z \in \rdd$, 
		\begin{align*}
			\irp (e^{-s\cL} f(z)) \, \eta_t(s) \dd{s} & = \irp \Big( \sum_{\a,\b \in \bN^d} e^{-s(d+2|\b|)^\nu} c_{\a,\b} \Phi_{\a,\b}(z) \Big) \eta_t(s) \dd{s} \\
			& = \sum_{\a,\b \in \bN^d} c_{\a,\b} \Phi_{\a,\b}(z) \Big( \irp e^{-s(d+2|\b|)^\nu} \eta_t(s) \dd{s} \Big) \\
			& = \sum_{\a,\b \in \bN^d} e^{-t(d+2|\b|)^\nu} c_{\a,\b} \Phi_{\a,\b}(z) \\ 
			& = e^{-t\cL^\nu} f(z). \qedhere
		\end{align*}
	\end{proof}
	
	In the following statement, we agree that $X^{p,q}$ denotes either $M^{p,q}$ or $W^{p,q}$.
	\begin{theorem}\label{maint-heatfrac} Consider $0<\nu <1$ and $1 \le p_1,p_2,q_1,q_2 \le \infty$. The operator $e^{-t\cL^\nu}$ defined in \eqref{eq-def-fractheatL} is a continuous map $\X1(\rdd) \to \X2(\rdd)$ if and only if $q_2 \ge q_1$ and, in such case, the following bound holds:
		\begin{equation}\label{eq-maint-fract}
			\| e^{-t\cL^\nu}f \|_{\X2} \le C_\nu(t) \| f \|_{\X1}, 
		\end{equation} where
		\begin{equation}\label{eq-ct-fract}
			C_\nu(t) \coloneqq \begin{cases}
				C e^{-td^\nu} & (t\ge 1) \\ C t^{-\frac{d}{\nu \tilde{p}}} & (0<t\le 1), 
			\end{cases} \qquad \frac{1}{\tilde{p}} \coloneqq \max\Big\{ \Big( \frac{1}{p_2}-\frac{1}{p_1}\Big),0 \Big\}, 
		\end{equation} and $C>0$ is a constant that does not depend on $t$ or $f$.
		
	\end{theorem}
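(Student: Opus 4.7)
The plan is to transfer the bounds for $e^{-s\cL}$ provided by Theorem \ref{maint-heat1} to $e^{-t\cL^\nu}$ via the subordination identity in Proposition \ref{prop-gamma-fract}, and then carry out a scalar integral estimate against the $\nu$-stable density. Starting from $e^{-t\cL^\nu}f = \int_0^\infty (e^{-s\cL}f)\eta_t(s)\, ds$, I take the symplectic Gabor transform and apply Minkowski's integral inequality iteratively in the mixed $L^{p_2,q_2}$ norm (legitimate since $\eta_t \ge 0$) to obtain
\begin{equation*}
\|e^{-t\cL^\nu}f\|_{X^{p_2,q_2}} \le \|f\|_{X^{p_1,q_1}}\int_0^\infty C(s)\, \eta_t(s)\, ds,
\end{equation*}
where $C(s)$ is the constant in \eqref{eq-ct} (with $\tilde p$ now taken as in \eqref{eq-ct-fract}) supplied by Theorem \ref{maint-heat1}. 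In particular, the constraint $q_2 \ge q_1$ is inherited from that theorem and yields the sufficiency side of the necessary-and-sufficient condition. It remains to control $I(t) := \int_0^\infty C(s)\eta_t(s)\, ds$ by $C_\nu(t)$.

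I would split $I(t)$ at $s=1$. The tail part is immediate from the defining property \eqref{eq-subord-prop}: $\int_1^\infty e^{-sd}\eta_t(s)\, ds \le e^{-td^\nu}$. For the near-origin piece $J(t) := \int_0^1 s^{-d/\tilde p}\eta_t(s)\, ds$, the self-similarity $\eta_t(s) = t^{-1/\nu}\eta_1(s t^{-1/\nu})$ of the $\nu$-stable subordinator (which follows at once from $X_t \stackrel{d}{=} t^{1/\nu}X_1$) yields $J(t) = t^{-d/(\nu\tilde p)}\int_0^{t^{-1/\nu}} u^{-d/\tilde p}\eta_1(u)\, du$. The integral $\int_0^\infty u^{-d/\tilde p}\eta_1(u)\, du$ is finite thanks to the heavy tail $\eta_1(u) \sim u^{-1-\nu}$ at infinity and the super-polynomial vanishing at the origin, so for $0<t\le 1$ one immediately reads off $J(t) \lesssim t^{-d/(\nu\tilde p)}$, closing the small-time regime.

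The main obstacle is the large-time estimate $J(t) \le C e^{-td^\nu}$ for $t \ge 1$. Here the upper limit $t^{-1/\nu}$ shrinks to zero, and a crude polynomial bound on $\eta_1$ near the origin would yield only $O(t^{-d/(\nu\tilde p)})$, which is insufficient. I plan to invoke the classical sharp asymptotic $\eta_1(u) \lesssim \exp(-c u^{-\nu/(1-\nu)})$ as $u \to 0^+$ for the $\nu$-stable subordinator (standard in the literature referenced in \cite{bogdan_09}). A Laplace-type substitution $v = u^{-\nu/(1-\nu)}$ then gives $\int_0^{t^{-1/\nu}} u^{-d/\tilde p}\eta_1(u)\, du \lesssim e^{-c t^{1/(1-\nu)}}$, up to harmless polynomial factors. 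Since $1/(1-\nu) > \nu$ whenever $0 < \nu < 1$, this dominates $e^{-td^\nu}$ uniformly for $t \ge 1$. Assembling the two pieces of $I(t)$ delivers \eqref{eq-maint-fract} with the constant \eqref{eq-ct-fract}.

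For necessity when $q_2 < q_1$, I rewrite \eqref{eq-def-fractheatL} in twisted-convolution form as $e^{-t\cL^\nu}f = f \times p_t^{(\nu)}$, where $p_t^{(\nu)} = (8\pi)^{-d}\sum_k e^{-t(d+2k)^\nu}\vp_k$. The bound $\|\vp_k\|_{M^1}\lesssim k^{2d^2-1}$ from Proposition \ref{prop-polybound}, combined with the super-polynomial decay of $e^{-t(d+2k)^\nu}$ in $k$, forces absolute convergence of the series in $M^1(\rdd)$, so that $p_t^{(\nu)}\in M^1(\rdd)\smo$. Proposition \ref{prop-sharpiff}, extended to twisted convolutions as in Remark \ref{rem-modwp}, then rules out continuity of $f \mapsto f \times p_t^{(\nu)}$ as a map $X^{p_1,q_1}(\rdd) \to X^{p_2,q_2}(\rdd)$ whenever $q_2 < q_1$, completing the characterization.
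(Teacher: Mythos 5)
Your proof is correct, but it takes a genuinely different route for the large-time regime $t\ge 1$. The paper handles $t \ge 1$ by estimating the series \eqref{eq-def-fractheatL} directly: it applies Proposition \ref{prop-cwt} term by term, invokes the polynomial bound $\|\vp_k\|_{W^1}\lesssim k^{2d^2-1}$ from Proposition \ref{prop-polybound}, and compares the tail of the resulting sum to a Gamma-type integral, so that subordination plays no role there. You instead push the subordination estimate through for all $t>0$, which forces you to control $J(t)=\int_0^{t^{-1/\nu}} u^{-d/\tilde p}\eta_1(u)\,du$ as $t\to\infty$; this in turn requires the sharp asymptotic $\eta_1(u)\lesssim\exp(-cu^{-\nu/(1-\nu)})$ for the $\nu$-stable density near the origin, a classical but non-trivial fact that the paper deliberately avoids by switching strategies. (Your observation that the resulting decay $\exp(-ct^{1/(1-\nu)})$ dominates $e^{-td^\nu}$ because $1/(1-\nu)>1$ is correct, and the polynomial prefactors are indeed harmless after shrinking the exponent constant.) For the small-time regime your self-similarity rewriting $\eta_t(s)=t^{-1/\nu}\eta_1(st^{-1/\nu})$ is also a different, perhaps more transparent, route than the paper's: the paper replaces $s^{-d/\tilde p}$ by its Laplace representation \eqref{eq-negpow-id}, swaps integrals, and invokes \eqref{eq-subord-prop}, thereby needing only the defining identity of the subordinator rather than finiteness of its negative moments. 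Your necessity argument ($p_t^{(\nu)}\in M^1(\rdd)$ plus Proposition \ref{prop-sharpiff} via Remark \ref{rem-modwp}) coincides with the paper's. In sum, your version buys a unified treatment of all $t>0$ at the cost of importing sharper probabilistic input on $\eta_t$, while the paper's version keeps the subordination layer lightweight at the cost of a separate, more computational argument in the large-time case.
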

	\begin{proof}
		First, let us discuss the necessity of the condition $q_2 \ge q_1$. Recall that $e^{-t\cL^\nu} f = f \times p_t^{(\nu)}$. Even if an explicit formula for $p_t^{(\nu)}$ is not currently available, a rough bound shows that $p_t^{(\nu)} \in M^1(\rdd)$. Indeed, by virtue of Proposition \ref{prop-polybound},
		\begin{equation}
			\| p_t^{(\nu)} \|_{M^1}  = \| p_t^{(\nu)} \|_{W^1}  \le \sum_{k=0}^{\infty} e^{-t(d+2k)^\nu} \| \vp_k \|_{W_1}.
		\end{equation} Let $N$ be as in Proposition \ref{prop-polybound}. Resorting to \eqref{eq-vpk-wpq}, we have
		\begin{equation}
			\|p_t^{(\nu)}\|_{M^1} \lesssim \Big( \sum_{k=0}^{N-1} e^{-t(d+2k)^\nu} \|\vp_k\|_{W^1} + \sum_{k=N}^{+\infty} e^{-t(d+2k)^\nu}  k^{2d^2-1} \Big). 
		\end{equation} Therefore, 
		\begin{align*}
			\sum_{k=N}^{+\infty} e^{-t(d+2k)^\nu}  k^{2d^2-1}  & \le \irp e^{-t(d+2y)^\nu}(d+2y)^{2d^2-1} \dd{y} \\ & = \frac{e^{-td^\nu}}{\nu} \irp e^{-tu}(d^\nu+u)^{-1+2d^2/\nu} \dd{u},
		\end{align*} where the substitution $(d+2y)^\nu= d^\nu+u$ was performed, and the integral is thus finite for all $t>0$. This shows that $p_t^{(\nu)} \in M^1(\rdd)$ and the necessity of the condition $q_2 \ge q_1$ thus follows by Proposition \ref{prop-sharpiff}.
		
		Let us now separately discuss two cases. 
		
		\noindent \textbf{Case $t \ge 1$.}  In view of the assumptions and \eqref{eq-def-fractheatL}, using Proposition \ref{prop-cwt} with a suitable choice of $p_0$ and $q_0$ (see Remark \ref{rem-modwp}) we have
		\begin{align*}
			\|e^{-t\cL^\nu}\|_{\X2} & \lesssim \sum_{k=0}^{\infty} e^{-t(d+2k)^\nu} \| f \times \vp_k\|_{\X2} \\
			&  \lesssim \| f \|_{\X1} \sum_{k=0}^{\infty} e^{-t(d+2k)^\nu} \|\vp_k\|_{\W0}.
		\end{align*} We argue as above, namely for $N$ as in Proposition \ref{prop-polybound}, we have (recall that $M^1(\rdd)=W^1(\rdd)\subseteq W^{p_0,q_0}(\rdd)$)
		\begin{equation}
			\|e^{-t\cL^\nu}\|_{\X2} \lesssim \| f \|_{\X1} \Big( \sum_{k=0}^{N-1} e^{-t(d+2k)^\nu} \|\vp_k\|_{W^{p,q}} + \sum_{k=N}^{+\infty} e^{-t(d+2k)^\nu}  k^{2d^2-1} \Big). 
		\end{equation} The first sum can be bounded by $e^{-td^\nu}$ up to a positive factor that does not depend on $t$. Concerning the second one, arguing as above we obtain
		\begin{equation}
			\sum_{k=N}^{+\infty} e^{-t(d+2k)^\nu}  k^q  \le  \frac{e^{-td^\nu}}{\nu} \irp e^{-tu}(d^\nu+u)^{-1+2d^2/\nu} \dd{u} \le C e^{-td^\nu},
		\end{equation} where we set $ C = \nu^{-1} \irp e^{-u}(d^\nu+u)^{-1+2d^2/\nu} \dd{u}$ --- the integral appearing above is a decreasing function of $t$, and $\nu C$ coincides with its value at $t=1$.  
		
		\noindent \textbf{Case $0<t\le 1$.} The claim is a direct consequence of Theorem \ref{maint-heat1} and Proposition \ref{prop-gamma-fract}. In particular, we have
		\begin{align*}
			\| e^{-t\cL^\nu} f \|_{\X2} & \le \irp \| e^{-s\cL} f \|_{\X2} \eta_t(s) \dd{s} \\
			& \lesssim \| f\|_{\X1} \Big( \int_0^1  s^{-d/\tilde{p}} \eta_t(s) \dd{s} + \int_1^{+\infty} e^{-sd} \eta_t(s) \dd{s} \Big).
		\end{align*} The second integral is clearly bounded by $e^{-td^\nu}$ in view of \eqref{eq-subord-prop}. Concerning the first one, let us first recall the following identity for the Gamma function: 
		\begin{equation}\label{eq-negpow-id}
			u^{-w} = \frac{1}{\Gamma(w)} \irp e^{-yu} y^{w-1} \dd{y}, \qquad u >0, \quad w \in \bR.  
		\end{equation} Then we infer
		\begin{equation}
			\int_0^1  s^{-d/\tilde{p}} \eta_t(s) \dd{s} = \int_0^1  \Big( \frac{1}{\Gamma(d/\tilde{p})} \irp e^{-ys} y^{d/\tilde{p}-1} \dd{y} \Big)  \eta_t(s) \dd{s}.
		\end{equation} After swapping the integration order and using the identity \eqref{eq-subord-prop} we obtain
		\begin{equation}
			\int_0^1  s^{-d/\tilde{p}} \eta_t(s) \dd{s} \lesssim \irp e^{-ty^\nu} y^{d/\tilde{p}-1} \dd{y}.
		\end{equation}
		The substitution $v=ty^\nu$ finally yields
		\begin{equation}
			\irp e^{-ty^\nu} y^{d/\tilde{p}-1} \dd{y} \lesssim \frac{1}{\nu} \Gamma\Big(\frac{d}{\a\tilde{p}}\Big) t^{-\frac{d}{\nu\tilde{p}}},
		\end{equation}
		therefore 
		\begin{equation}
			\| e^{-t\cL^\nu} f \|_{\X2} \lesssim e^{-td^\nu} + t^{-\frac{d}{\nu\tilde{p}}} \le t^{-\frac{d}{\nu\tilde{p}}}, \qquad 0<t\le 1. \qedhere
		\end{equation}
	\end{proof}
	
	Finally, let us examine the boundedness of the fractional heat semigroup on Lebesgue spaces.
	
	\begin{corollary}\label{cor-fracheatL-leb}
		For $0<\nu\le 1$ and $1\le p \le q \le \infty$ we have
		\begin{equation}
			\|e^{-t\cL^\nu} f \|_{L^q} \le C_{p,q}^{(\nu)}(t) \|f\|_{L^p}, 
		\end{equation} where
		\begin{equation}
			C_{p,q}^{(\nu)}(t) \coloneqq \begin{cases}
				C e^{-td^\nu} & (t\ge 1) \\ C t^{-\mu_\nu} & (0<t\le 1), 
			\end{cases} \qquad \mu_\nu \coloneqq \max \Big\{ \frac{d}{\nu} \Big(\frac{1}{\min\{q,q'\}}-\frac{1}{\max\{p,p'\}} \Big) , 0\Big\},
		\end{equation} for a constant $C>0$ that does not depend on $f$ or $t$.
	\end{corollary}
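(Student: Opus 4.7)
The plan is to deduce the Lebesgue bounds from Theorem \ref{maint-heatfrac} (for $0<\nu<1$) and Theorem \ref{maint-heat1} (for $\nu=1$) by sandwiching $L^p(\rdd)\to L^q(\rdd)$ between two Wiener amalgam embeddings of the form \eqref{eq-embed-lp-amalg}, applied in dimension $2d$. This is essentially a one-line reduction once the amalgam indices are chosen correctly.

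More precisely, given $1\le p\le q\le\infty$, I would pick the extremal (and thus sharpest) amalgam indices allowed by \eqref{eq-embed-lp-amalg}, namely $a\coloneqq \max\{p,p'\}$ and $b\coloneqq \min\{q,q'\}$, so that
\begin{equation}
L^p(\rdd)\hookrightarrow W^{a,p}(\rdd), \qquad W^{b,q}(\rdd)\hookrightarrow L^q(\rdd).
\end{equation}
Since by hypothesis $q\ge p$, the ``$q_2\ge q_1$'' condition of Theorem \ref{maint-heatfrac} (resp.\ Theorem \ref{maint-heat1}) is satisfied with the choice $q_1=p$, $q_2=q$, $p_1=a$, $p_2=b$. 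Composing the three maps yields continuity of $e^{-t\cL^\nu}\colon L^p(\rdd)\to L^q(\rdd)$.

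For the quantitative bound I would just track the constant $C_\nu(t)$ from \eqref{eq-ct-fract}: the exponential large-time factor $e^{-td^\nu}$ passes through the embeddings unchanged, while for $0<t\le 1$ one has
\begin{equation}
\frac{1}{\tilde p}=\max\Big\{\frac{1}{b}-\frac{1}{a},0\Big\}=\max\Big\{\frac{1}{\min\{q,q'\}}-\frac{1}{\max\{p,p'\}},0\Big\},
\end{equation}
so that $d/(\nu\tilde p)$ agrees exactly with $\mu_\nu$ in the statement. The case $\nu=1$ is identical, with Theorem \ref{maint-heat1} in place of Theorem \ref{maint-heatfrac}.

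There is essentially no obstacle: the only thing to verify carefully is that the chosen amalgam indices $(a,p)$ and $(b,q)$ satisfy both the embeddings \eqref{eq-embed-lp-amalg} (with dimension parameter $2d$, as our functions live on $\rdd$) and the index constraints of the heat-semigroup bounds, and that the dimensional exponent in \eqref{eq-ct-fract} is indeed $d$ rather than $2d$ — but this is already incorporated in the statement of Theorem \ref{maint-heatfrac} thanks to the explicit computation of $\|p_t\|_{W^{p,q}}$ in \eqref{eq-pt-wpq}. Hence the corollary follows by a direct assembly of ingredients already established.
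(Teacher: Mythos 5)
Your proposal is correct and follows essentially the same route as the paper: sandwich $L^p(\rdd)\to L^q(\rdd)$ between the amalgam embeddings \eqref{eq-embed-lp-amalg} with the extremal indices $p_1=\max\{p,p'\}$, $p_2=\min\{q,q'\}$, $q_1=p$, $q_2=q$, then invoke Theorem \ref{maint-heatfrac} (and Theorem \ref{maint-heat1} when $\nu=1$). Your explicit flagging of the $\nu=1$ case via Theorem \ref{maint-heat1}, and the sanity check on the dimensional exponent, are correct and, if anything, slightly more careful than the paper's terse write-up.
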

	\begin{proof} The claim can be proved by combining Theorem \ref{maint-heatfrac} (for amalgam spaces, with $p_2=\min\{q,q'\}$, $q_2=q$, $p_1=\max\{p,p'\}$ and $q_1=p$) with the embeddings in \eqref{eq-embed-lp-amalg}:
		\begin{align*}
			\|e^{-t\cL^\nu} f\|_{L^q} & \lesssim \|e^{-t\cL^\nu} f\|_{W^{p_2,q}} \\ 
			& \le C_{p,q}^{(\nu)}(t) \| f \|_{W^{p_1,p}} \\
			& \lesssim C_{p,q}^{(\nu)}(t) \|f \|_{L^p}, 
		\end{align*} where $C_{p,q}^{(\nu)}(t)$ is the constant given in the claim. 
	\end{proof}
	
	\begin{remark} The previous bound recaptures and extends the one proved in \cite[Theorem 7.4]{wong_05} in the case where $\nu=1$ and $1 \le p \le 2 \le q \le \infty$. More generally, note that the singularity at small time always occurs even if $q=p$ unless $p=2$ --- in particular, $\mu_\nu = \tfrac{d}{\nu}|1-\tfrac{2}{p}|$. We also highlight that, in the case $\nu=1$, the singularity can be unveiled as well via Young's convolution inequality after computing the $L^r$ norm of the heat propagator $p_t$, where $r$ is such that $1/p+1/r=1+1/q$ and $q\ge p$. 
	\end{remark}
	
	\subsection{More on negative powers of the twisted Laplacian} 
	As far as negative powers of $\cL$ are concerned, a boundedness result supplemental to Theorem \ref{cor-fracpow-L} can be derived from Theorem \ref{maint-heat1}. To this aim, inspired by \cite{cappiello,thangavelu_18}, we use again subordination and the Gamma functional calculus to give a representation of $\cL^{-\nu}$ in terms of the heat flow --- the proof goes as in that of Proposition \ref{prop-gamma-fract}.
	
	\begin{lemma}
		For all $\nu > 0$ and $f \in \cS(\rdd)$, we have the pointwise identity 
		\begin{equation}\label{eq-def-negpowL}
			\cL^{-\nu} f(z) = \frac{1}{\Gamma(\nu)} \irp e^{-t\cL} f(z)\, t^{\nu-1} \dd{t}, \qquad z \in \rdd. 
		\end{equation}
	\end{lemma}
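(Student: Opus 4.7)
The plan is to mirror the proof of Proposition~\ref{prop-gamma-fract}, with the subordination identity \eqref{eq-subord-prop} replaced by the Gamma-function identity \eqref{eq-negpow-id}, namely $u^{-\nu}=\Gamma(\nu)^{-1}\irp e^{-tu}t^{\nu-1}\dd{t}$ for $u>0$. Since $\Phi_{\a,\b}$ are eigenfunctions of $\cL$ with eigenvalues $d+2|\b|\ge d\ge 1$, applying this identity with $u=d+2|\b|$ to each spectral component of $f$ should immediately reproduce the series defining $\cL^{-\nu}f$ in \eqref{eq-def-Lnu}; the work is then reduced to justifying the interchange of a sum and an integral.

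First, I would expand $f\in\cS(\rdd)$ as $f=\sum_{\a,\b\in\bN^d}c_{\a,\b}\Phi_{\a,\b}$, with $c_{\a,\b}=\lan f,\Phi_{\a,\b}\ran$, and recall from the proof of Proposition~\ref{prop-gamma-fract} that, for every $n\in\bN$, the coefficients satisfy the decay estimate $|c_{\a,\b}|\le \|\cH^n f\|_{L^2}(d+|\a|+|\b|)^{-n}$, while $\|\Phi_{\a,\b}\|_{L^\infty}\le 1$. Combining these with $e^{-t(d+2|\b|)}\le 1$ shows that the spectral expansion
\begin{equation}
    e^{-t\cL}f(z)=\sum_{\a,\b\in\bN^d}c_{\a,\b}e^{-t(d+2|\b|)}\Phi_{\a,\b}(z)
\end{equation}
converges absolutely and uniformly in $(t,z)\in[0,\infty)\times\rdd$.

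Next I would plug this expansion into the right-hand side of the asserted identity, so that
\begin{equation}
    \frac{1}{\Gamma(\nu)}\irp e^{-t\cL}f(z)\,t^{\nu-1}\dd{t}
    = \frac{1}{\Gamma(\nu)}\irp\Big(\sum_{\a,\b\in\bN^d}c_{\a,\b}e^{-t(d+2|\b|)}\Phi_{\a,\b}(z)\Big)t^{\nu-1}\dd{t}.
\end{equation}
To exchange summation and integration by Fubini--Tonelli, it suffices to check that $\sum_{\a,\b}|c_{\a,\b}||\Phi_{\a,\b}(z)|\irp e^{-t(d+2|\b|)}t^{\nu-1}\dd{t}<\infty$. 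The inner integral equals $\Gamma(\nu)(d+2|\b|)^{-\nu}\le\Gamma(\nu)d^{-\nu}$, and choosing $n$ large enough in the Schwartz decay estimate for $c_{\a,\b}$ renders the remaining double sum over $(\a,\b)$ finite. The exchange is then legitimate, and applying \eqref{eq-negpow-id} termwise with $u=d+2|\b|$ yields
\begin{equation}
    \frac{1}{\Gamma(\nu)}\irp e^{-t\cL}f(z)\,t^{\nu-1}\dd{t}
    =\sum_{\a,\b\in\bN^d}c_{\a,\b}(d+2|\b|)^{-\nu}\Phi_{\a,\b}(z)=\cL^{-\nu}f(z),
\end{equation}
which is the required identity.

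The only genuine obstacle is the Fubini justification near $t=0$, where $t^{\nu-1}$ is singular when $0<\nu<1$: the bound $e^{-t(d+2|\b|)}\le 1$ alone does not control the $t$-integral, but the explicit evaluation $\irp e^{-t(d+2|\b|)}t^{\nu-1}\dd{t}=\Gamma(\nu)(d+2|\b|)^{-\nu}$ removes this singularity and reduces matters to the Schwartz decay of $(c_{\a,\b})$, which was already exploited in Proposition~\ref{prop-gamma-fract}. No additional ingredient is needed.
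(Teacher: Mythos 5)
Your proof is correct and matches the paper's intent: the paper merely states that "the proof goes as in that of Proposition~\ref{prop-gamma-fract}," and you have written out exactly that analogous argument, replacing the subordinator identity \eqref{eq-subord-prop} with the Gamma-function identity \eqref{eq-negpow-id} and justifying the sum/integral interchange via the explicit evaluation $\irp e^{-t(d+2|\b|)}t^{\nu-1}\dd{t}=\Gamma(\nu)(d+2|\b|)^{-\nu}$ together with the Schwartz decay of the coefficients $c_{\a,\b}$.
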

	
	As a consequence of Theorem \ref{maint-heat1}, it is now easy to show that $\cL^{-\nu}$ is a continuous map $\M1(\rdd) \to \M2(\rdd)$, $1 \le p_1,p_2,q_1,q_2 \le \infty$, if $q_2 \ge q_1$ and $\nu>d/\tilde{p}$, since
	\begin{align*}
		\| \cL^{-\nu} f \|_{\M2} & \lesssim \irp \| e^{-t\cL} f \|_{\M2} t^{\nu-1} \dd{t} \\
		& \lesssim \| f\|_{\M1} \Big( \int_0^1  t^{\nu-1-d/\tilde{p}} \dd{t} + \int_1^{+\infty} e^{-td} t^{\nu-1} \dd{t} \Big). 
	\end{align*} 
	
	Moreover, arguing as in the proof of Corollary \ref{cor-fracheatL-leb}, boundedness extends to Lebesgue spaces: for $1 \le p \le  q \le \infty$, we have
	\begin{equation}
		\| \cL^{-\nu} f\|_{L^q} \lesssim \|f \|_{L^p}, \qquad \nu > d \max \Big\{ \frac{1}{\min\{q,q'\}}-\frac{1}{\max\{p,p'\}}  , 0\Big\}.
	\end{equation} 
	
	\begin{remark} We emphasize that the same arguments used here, namely subordination and integral representations, can be used to handle a number of operators associated with $\cL$ that are of common use in harmonic analysis, such as the Bessel potentials 
		\begin{equation}
			(I+\cL)^{-\nu}f(z) = \frac{1}{\Gamma(\nu)}\irp t^{\nu-1}e^{-t} e^{-t\cL}f(z) \dd{t}, \qquad \nu >0, 
		\end{equation} or, in connection with the Schr\"odinger case, the Riesz-type means
		\begin{equation}
			I_{u,v}f(z) = uv^{-u} \int_0^v (v-t)^{-u-1} e^{-it\cL}f(z) \dd{t}, \qquad u,v>0. 
		\end{equation}
		We also expect that an improvement of the boundedness result already obtained for positive fractional powers $0<\nu<1$ can be derived from the Gamma representation
		\begin{eqnarray}
			\cL^{\nu} f(z) = \frac{1}{\Gamma(-\nu)} \irp (e^{-t\cL} f(z)-f(z))\, \frac{1}{t^{\nu+1}} \dd{t}, \qquad z \in \rdd,
		\end{eqnarray} although refined bounds for $e^{-t\cL}- I$ rather than just $e^{-t\cL}$ are required. 
	\end{remark}
	
	\section*{Acknowledgements} 
	The author is indebted to Fabio Nicola, Luigi Rodino and Patrik Wahlberg for fruitful discussions on the topics of this note. 
	
	The author is member of Gruppo Nazionale per l’Analisi Matematica, la
	Probabilit\`a e le loro Applicazioni (GNAMPA) --- Istituto Nazionale di Alta Matematica (INdAM). The present research is partially supported by the GNAMPA-INdAM project ``Analisi armonica e stocastica in problemi di quantizzazione e integrazione funzionale'', award number (CUP): E55F22000270001. 
	
	The author reports there are no competing interests to declare.

\end{document}